%% file: PAPER.tex
\documentclass[psamsfonts]{amsart}

\input{preamble}

\title{Approximation by simple structures}

\author[K. Ellman-Aspnes]{Katie Ellman-Aspnes}\thanks{The author was supported by Ramsey's NSF grant DMS-2246992 and NSF CAREER award DMS-2442011}

\date{\today}

\begin{document}

\begin{abstract}
In her graduate thesis, Gwyneth Harrison-Shermoen developed an abstract framework for approximating structures of a given theory, which generalized the notion of smoothly approximable structures. At the time it was unclear how broadly the framework applied due to a lack of examples. In this paper we will show that with slight modification to the framework, several well-understood $\mathrm{NSOP}_1$ theories are in fact approximable by simple structures. We discuss certain structural properties of the limit structure that can be obtained from the approximation and introduce a notion of limit dimension as a candidate for recovering Kim-independence in the limit structure. We also provide some examples of structures that are not approximable by structures of finite Morley rank.
\end{abstract}

\maketitle

\setcounter{tocdepth}{1}
\tableofcontents

\section{Introduction}
Two central features in the development of simple theories are the well-behaved nature of non-forking independence and the rich class of examples provided by smoothly approximable structures. Much work has been done in recent years on generalizing the former to the $\nsop_1$ context via the development of Kim-independence as an analogue of non-forking independence in the simple context. This raises the natural question as to whether there is a corresponding analogue of smooth approximability.

Smoothly approximable structures are homogeneous structures which can be written as the union of a chain of finite homogeneous substructures, with the key point being that nice properties from the finite homogeneous structures pass upwards to the limit. The first natural example of a similar construction in the $\nsop_1$ context is the Granger example $T_\infty$, the two-sorted theory of an infinite-dimensional vector space over an algebraically closed field with a nondegenerate symmetric or alternating bilinear form, detailed in \cite[Chapter 12]{granger1999stability}. The construction of this example directly parallels that of smoothly approximable structures, with Granger's approximation of $T_\infty$ taking the infinite-dimensional vector space to be the union of a chain of finite-dimensional subspaces of the approximated model. The theory of the limit, $T_\infty$, is $\nsop_1$ \cite[Corollary 6.4]{chernikov2016model}, and the theories of the approximating structures are simple (even stable) \cite[Theorem 10.2.3]{granger1999stability}. 

Granger also defined the notion of \textit{$\Gamma$-non-forking independence} \cite[Definition 12.2.1]{granger1999stability} which in the approximated structure is the limit of the corresponding notion in the sequence of approximating structures, and showed that in the finite-dimensional case, $\Gamma$-non-forking independence coincides with non-forking independence \cite[Proposition 10.2.2]{granger1999stability}. In the infinite-dimensional ($\nsop_1$) case, $\Gamma$-non-forking independence \textit{strengthens} Kim-independence; however they are not equal because $\Gamma$-non-forking-independence does not satisfy local character in uncountable models of $T_\infty$ \cite[Theorem 12.2.2]{granger1999stability}.

Granger's initial construction was further developed into a more general axiomatic framework by Harrison-Shermoen in \cite{harrison2013independence}. This framework extended the original ideas from Granger's approximating sequences to a directed system of approximating substructures, and defined the notion of \textit{limit independence} as the limit of non-forking independence in the directed system \cite[Section 3]{harrison2013independence}. Limit independence maintained the nice relationship between independence in the approximating and limit structures exhibited by $\Gamma$-non-forking independence. However, while the latter was defined in the specific context of $T_\infty$, limit independence is determined solely through non-forking independence and therefore can be understood with respect to any sufficiently saturated approximated structure. This general framework therefore suggested the potential for a much broader application of approximation within $\nsop_1$ theories. Despite this apparent generality, however, $T_\infty$ remained the only known example of a theory approximable in the broader framework. It was therefore unclear whether approximability was a relatively common or rare property of $\nsop_1$ theories.

We show that with some slight adjustments, the main well-known $\nsop_1$ examples do fit into this framework. In Section \ref{sec:approx} we will detail the original definition and results from \cite{harrison2013independence} along with the necessary adaptations to the framework, and in Section \ref{sec:examples} we will detail two additional $\nsop_1$ examples: $T^*_{\feq}$ and $\omega$-free PAC fields. These, along with $T_\infty$, are three very different types of $\nsop_1$ theories, which suggests that approximability is in fact a relatively common property of $\nsop_1$.

In Section \ref{sec:nonex}, we consider the question of when approximability is \textit{not} feasible, particularly in the context of generalizing the $T_\infty$ example to other linear structures. Borrowing arguments from the pseudofinite context \cite{macpherson2025omega}, we give some examples of theories whose models are not approximable by structures of finite Morley rank. This includes theories which are $\nsop_1$, so while approximability by finite-dimensional structures is a common feature of $\nsop_1$ theories, it is not universal.

In the final section we seek to address the question of recovering Kim-independence in the limit. The analysis of the approximated structures in \cite{harrison2013independence} relied heavily on a notion of limit independence $\ind^{\lim}$ (Definition \ref{def:limind}). While $\ind^{\lim}$ may be non-trivial, in the case where the approximating theories are simple it will always satisfy base monotonicity, meaning it cannot be used to recover Kim-independence in strictly $\nsop_1$ theories. Using some of the same ideas as in \cite{dobrowolski2023sets}, in Section \ref{sec:dim} we turn instead to a notion of limit \textit{dimension} with the goal of recovering Kim-independence in the approximated structure from some notion of dimension within the approximation itself. We will use the example $T^*_{\feq}$ (detailed in Section \ref{sec:examples}) as a case study, defining an explicit notion of dimension that does recover Kim-independence in the limit for $T^*_{\feq}$. We will then propose an initial attempt at a more abstract axiomatic notion of dimension that we hope will generalize the notion for $T^*_{\feq}$ to any approximable theory.

\section{Approximation} \label{sec:approx}

We begin by establishing the approximation framework from \cite{harrison2013independence}.

Given a theory $T$, we take $\M$ to be a model of $T$ and $\cH$ a directed system of substructures of $\M$. In the following definitions, $\bar{A}$ denotes $\acl^\M(A)$.

\begin{definition}\cite[Definition 3.2.1]{harrison2013independence}\label{def:limind}
    \begin{enumerate}
        \item For $\psi(x)\in\lang,a\in M$, we say that $\psi(a)$ is \textit{eventually true} in (or \textit{true in the limit of}) $\cH$ if there is some $\N\in\cH$ such that $a\in\N$;  and for $\N\subseteq \N'\in\cH$, $\N\models\psi(a)$. We define \textit{eventually false} analogously.
        \item Given a formula or finite partial type $\pi(x;y)\in\lang, b\in M$, and $C\subset M$, we say that $\pi(x; b)$ \textit{eventually forks} over $C$ if there is $\N\in\cH$ such that $b\in\N$ and for all $\N\subseteq \N'\in \cH$, $\pi(x; b)$ forks over $C\cap N'$ in $\N'$. We define \textit{eventual non-forking of a finite partial type} analogously.
        \item Given a (not necessarily complete) type $p(x)$ over $B$ and $C\subseteq B$, we say that $p(x)$ \textit{eventually forks} over $C$ if it contains some finite subtype that eventually forks over $C$, and that it \textit{eventually does not fork} over $C$ if every finite subtype eventually does not fork over $C$.
        \item For subsets $A,B$, and $C$ of $M$, we say that $A\ind^{\lim}_CB$ ($A$ is \textit{limit independent} or \textit{eventually independent} from $B$ over $C$) if for each finite tuple $a\in A$, there is $\N\in\cH$ such that $a\in\N$ and for all $\N\subseteq\N'\in\cH$,
        \[a\ind^{\N'}_{\bar{C}\cap N'}\bar{B}\cap N'\]
        where $\ind^{\N'}$ is non-forking independence as computed in $\N'$. That is, 
        \[\tp^{\N'}(a/\bar{B}\cup\bar{C}\cap N')\]
        does not fork over $\bar{C}\cap N'$ in $\N'$.
    \end{enumerate}
\end{definition}

\begin{definition}\label{def:approximation}
\cite[Definition 3.2.3]{harrison2013independence} Given $\M\models T$ and $\cH$ a directed system of substructures $\N$ of $\M$ we say $\cH$ \textit{approximates} $\M$ if conditions 1 through 5 hold, and identify optional conditions 6 and 7.
\begin{enumerate}
    \item \textbf{Covering}: $\bigcup\cH=\M$.
    \item \textbf{Automorphism Invariance}: $\cH$ is closed under automorphisms of $\M$.
    \item \textbf{Convergence of truth value}: For any $a\in\M$ and formula $\psi(x)\in\lang$, if $\M\models\psi(a)$ then $\psi(a)$ is eventually true in $\cH$. It follows that if $\M\models\neg\psi(a)$, then $\psi(a)$ is eventually false in $\cH$.
    \item \textbf{Stabilization of non-forking}: For every formula or finite partial type $\pi(x,y)$, tuple $b$, and set $C$ we have one of the following
    \begin{itemize}
        \item $\pi(x,b)$ eventually forks over $C$ in $\cH$
        \item $\pi(x,b)$ eventually does not fork over $C$ in $\cH$
    \end{itemize}
    \item \textbf{Strong finite character of $\nind^{\lim}$}: If $A\nind^{\lim}_CB$, then there are $\varphi(x,y,z)$ without parameters, $a\in A$, $b\in\bar{B}$, and $c\in\bar{C}$ such that $\M\models\varphi(a,b,c)$ and $\varphi(x,b,c)$ eventually forks over $\bar{C}$ in $\cH$.
    \item \textbf{Stabilization of algebraic closure}: If $\bar{A}=A$ then there is $\N_{\alg}\in\cH$ such that for all $\N\supseteq\N_{\alg}$, $A\cap N=\acl^\N(A\cap N)$.
    \item \textbf{Homogeneity}: For each $\N\in\cH$, $a,b\in M$, and $A\subset M$, if $a\equiv_A^\M b$ then $a\equiv_{A\cap N}^\N b$.
\end{enumerate}
\end{definition}

The key condition to note here is convergence of truth value. In \cite{macpherson2011definability} the authors considered asymptotic classes of finite structures and made an important distinction between the asymptotic theory and the limit theory of such a class. These do not necessarily agree in general, but in this setting the convergence of truth value condition guarantees that they coincide:

\begin{theorem}\label{thm:CTVultraproduct}
    Suppose $\cH$ is a directed system of substructures of $\M$ satisfying convergence of truth value. Then $\M$ is elementarily equivalent to an ultraproduct of elements of $\cH$.
\end{theorem}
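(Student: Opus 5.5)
The natural approach is an ultrafilter on the directed index set $\cH$, built so that it concentrates on ``tails''. For each $\N\in\cH$, let $U_\N=\{\N'\in\cH:\N\subseteq\N'\}$ be the cone above $\N$. Since $\cH$ is directed, any finitely many cones $U_{\N_1},\dots,U_{\N_k}$ contain a common cone $U_{\N^*}$, where $\N^*$ is an upper bound of $\N_1,\dots,\N_k$. In particular this intersection is nonempty, so the cones have the finite intersection property. Extend them to an ultrafilter $\mathcal U$ on $\cH$ and form $\prod_{\N\in\cH}\N/\mathcal U$. One bookkeeping point: coordinates must be defined everywhere. For $\N$ not containing the relevant element we fix an arbitrary element of $\N$; structures are nonempty, so this is harmless.

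Next define a map $\iota:\M\to\prod\N/\mathcal U$. For $a\in M$, covering gives some $\N_a\in\cH$ with $a\in\N_a$. Send $a$ to the class of the sequence that is $a$ on $U_{\N_a}$ and arbitrary elsewhere. Because $U_{\N_a}\in\mathcal U$, this class does not depend on the choice of $\N_a$ or of the filler elements. I claim $\iota$ is an elementary embedding.

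Take a formula $\psi(x)$ and a tuple $a$ with $\M\models\psi(a)$. By convergence of truth value there is $\N_0\ni a$ such that every $\N'\supseteq\N_0$ in $\cH$ satisfies $\psi(a)$. The set of indices where $\psi$ holds of the coordinates of $\iota(a)$ therefore contains $U_{\N_0}$, which lies in $\mathcal U$. By \L o\'s's theorem, $\prod\N/\mathcal U\models\psi(\iota(a))$. Applying the same argument to $\neg\psi$ handles the converse, using the remark in the statement of convergence of truth value.

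So $\iota$ is elementary, and in particular $\M\equiv\prod\N/\mathcal U$; sentences alone already give the elementary equivalence. This is an ultraproduct of elements of $\cH$, as required.

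I expect no serious obstacle. The only subtlety is choosing $\mathcal U$ to contain all cones rather than taking an arbitrary nonprincipal ultrafilter, and directedness is exactly what makes that choice possible.
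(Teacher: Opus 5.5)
Your proof is correct and follows the paper's argument: take an ultrafilter on $\cH$ containing every cone $\{\N'\in\cH:\N\subseteq\N'\}$ (possible by directedness), then apply \L o\'s's theorem to $\varphi$ and to $\neg\varphi$. Your map $\iota$ gives a slightly stronger conclusion, an elementary embedding of $\M$ into the ultraproduct. Covering is not among the stated hypotheses, but it follows from convergence of truth value applied to $x=x$, since ``eventually true'' already requires some $\N\in\cH$ containing the tuple.
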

\begin{proof}
    Let $\mathcal D$ be an ultrafilter on $\cH$ with the condition that for every $A\in\cH$, $\mathcal D$ contains the set $\{B\in\cH:A\subset B\}$. Let $P=\prod_{A\in\cH}A/\mathcal D$. We claim $P\models\Th(\M)$. 
    From the convergence of truth value condition, $\varphi\in\Th(\M)$ if and only if there exists an $\N_\varphi\in\cH$ such that for all $\N\in\cH$ with $\N\supseteq\N_\varphi$, $\N\models\varphi$. The choice of $\mathcal D$ guarantees that $\{B\in\cH:\N_\varphi\subset B\}\in\mathcal D$, so by \L o\^s's Theorem $P\models\varphi$. If $\varphi\notin\Th(\M)$, then an identical argument with $\neg\varphi$ guarantees that $P\not\models\varphi$. 
\end{proof}

In particular, this gives a means by which to determine when a structure cannot be approximated within this framework, an idea we will explore more in Section \ref{sec:nonex}.

In addition to the above definitions, Harrison-Shermoen proved that in the case where $\cH$ approximates $\M$ and every element of $\cH$ has a simple theory, then $\ind^{\lim}$ satisfies the following properties: invariance, monotonicity, base monotonicity, transitivity, normality, extension, finite character, strong finite character, and symmetry \cite[Theorem 3.3.2]{harrison2013independence}. Furthermore, if the approximation also satisfies optional conditions 6 and 7 and in each element of $\cH$ forking independence satisfies the Independence Theorem over algebraically closed sets, then $\ind^{\lim}$ also satisfies the Independence Theorem over algebraically closed sets \cite[Proposition 3.3.8]{harrison2013independence}.

It was noted in \cite{chernikov2016model} that this means in this situation $\ind^{\lim}$ strengthens Kim-independence and the approximated theory is $\nsop_1$. We will now take this one step further. 

\begin{lemma}\label{lem:localchar}
    (Local character of $\ind^{\lim}$) Assume that $\M$ is $\abs{T}^+$-saturated and that every element of $\cH$ has a simple theory. If $\kappa\geq\abs{T}^+$ is a regular cardinal, $\vect{A_i:i<\kappa}$ is an increasing continuous sequence of sets in $\M$ of size $<\kappa$, $A_{\kappa}=\bigcup_{i<\kappa}A_i$ and $\abs{A_\kappa}=\kappa$, then for any finite $d$ in $\M$ there is some $\alpha<\kappa$ such that $d\ind^{\lim}_{A_\alpha}A_\kappa$.
\end{lemma}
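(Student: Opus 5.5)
We argue by contradiction, using strong finite character of $\nind^{\lim}$ (condition 5 of Definition \ref{def:approximation}) to extract witnessing formulas, and then a pressing-down (Fodor) argument on $\kappa$, as in the standard proof of local character for forking in simple theories.

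\textbf{Step 1: extract witnesses.} Suppose for contradiction that $d\nind^{\lim}_{A_\alpha}A_\kappa$ for every $\alpha<\kappa$. Condition 5 gives, for each $\alpha$, a parameter-free formula $\varphi_\alpha(x,y,z)$, $b_\alpha\in\bar{A_\kappa}$ and $c_\alpha\in\bar{A_\alpha}$ with $\M\models\varphi_\alpha(d,b_\alpha,c_\alpha)$ and $\varphi_\alpha(x,b_\alpha,c_\alpha)$ eventually forking over $\bar{A_\alpha}$. (Since $\bar{A_\kappa}$ is the union of the increasing continuous chain $\bar{A_i}$, we may note $\acl$ commutes with the union.)

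\textbf{Step 2: pressing down.} For each limit $\alpha<\kappa$, the finite tuple $c_\alpha$ lies in $\bar{A_\alpha}=\bigcup_{i<\alpha}\bar{A_i}$, so $c_\alpha\in\bar{A_{f(\alpha)}}$ for some $f(\alpha)<\alpha$. By Fodor's lemma there is a stationary $S\subseteq\kappa$ and $\beta<\kappa$ with $f(\alpha)=\beta$ on $S$. Since there are only $\abs{T}<\kappa$ formulas, we may shrink $S$ (still stationary, in particular of size $\kappa$) so that $\varphi_\alpha=\varphi$ is constant on $S$. Now each $b_\alpha$ lies in $\bar{A_\kappa}$, hence in some $\bar{A_{g(\alpha)}}$; using regularity of $\kappa$ we choose inductively an increasing sequence $\alpha_0<\alpha_1<\cdots$ in $S$ of length $\kappa$ (or at least $\omega$) with $b_{\alpha_j},c_{\alpha_j}\in\bar{A_{\alpha_{j+1}}}$. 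Then $\varphi(x,b_{\alpha_j},c_{\alpha_j})$ eventually forks over $\bar{A_{\alpha_j}}\supseteq \bar{A_\beta}\cup\{b_{\alpha_i},c_{\alpha_i}:i<j\}$, while $d$ realizes all of them.

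\textbf{Step 3: contradiction in a single approximating structure.} Fix $n$ large (larger than the bound one needs: in a simple theory a formula cannot divide/fork along an infinite chain, so we aim for a contradiction with a finite-length chain of forking that is impossible by the local character / finite $\mathrm{D}$-rank bound of each $\N'$). Choose, by directedness and convergence of truth value, $\N'\in\cH$ containing $d$ and all parameters of the first $n$ stages, lying above every witness for the eventual forking. In $\N'$ we get $d$ satisfying $\bigwedge_{j<n}\varphi(x,b_{\alpha_j},c_{\alpha_j})$ with each conjunct forking over $\bar{A_{\alpha_j}}\cap N'$, which contains the earlier parameters. This yields a forking chain of length $n$ for $\tp^{\N'}(d/\dots)$. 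The \textbf{main obstacle} is that a single $\N'$ only sees finitely many stages, and simplicity of $\Th(\N')$ gives a bound on such chains only via $\mathrm{D}(x=x,\varphi,k)$-rank, which may depend on $\N'$ and need not be uniform across $\cH$. I would try first to handle this by working instead with the local rank of the formula $x=x$ for the fixed $\varphi$: by convergence of truth value applied to the first-order expressions ``$\mathrm{D}(x=x,\varphi,k)\geq n$'' (which are type-definable and, for dividing with fixed $k$, expressible by sentences), the rank bound is eventually constant; if that fails I would use $\abs{T}^+$-saturation of $\M$ together with Theorem \ref{thm:CTVultraproduct} to transfer the uniform chain to an ultraproduct and derive a contradiction there. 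Either way the contradiction shows some $\alpha<\kappa$ has $d\ind^{\lim}_{A_\alpha}A_\kappa$.
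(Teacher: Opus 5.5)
Steps 1 and 2 are fine, but Step 3 is a genuine gap, and the obstacle you flag there is not a technicality. Each $\N'\in\cH$ only sees finitely many stages of your chain. So you can contradict simplicity of $\Th(\N')$ only if two things are uniform along $\cH$: the $k$, which comes from dividing in $\Th(\N')$ and may depend on $\N'$, and the bound on $\mathrm{D}^{\N'}(x=x,\varphi,k)$. Neither of your repairs supplies this.

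\emph{Convergence of truth value.} Applied to the sentences approximating ``$\mathrm{D}(x=x,\varphi,k)\geq n$'', it gives an eventual bound in $\cH$ only if that rank is already finite in $\Th(\M)$, i.e.\ essentially only if $T$ is simple. The paper derives simplicity of $T$ (Corollary \ref{cor:simple}) from this very lemma, so this is circular.

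\emph{The ultraproduct.} By Theorem \ref{thm:CTVultraproduct}, an ultraproduct of the $\N$'s has theory $\Th(\M)$. An ultraproduct of simple structures can be non-simple, e.g.\ $T^*_{\feq}$ approximated by the simple $\M_n$. So an infinite dividing chain there contradicts nothing.

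With $d$ as the realizer, a tree-property argument has nothing to contradict, because simplicity of the approximating structures is not uniform.

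The missing idea is to swap sides. In the paper's proof, simplicity of the members of $\cH$ enters only through symmetry of $\ind^{\lim}$. From $d\nind^{\lim}_{A_\alpha}A_\kappa$ you get $A_\kappa\nind^{\lim}_{A_\alpha}d$. Condition 5 with $A_\kappa$ on the left then yields witnesses $\varphi(x,b,c)$ with $b\in\acl(d)$ and $c\in\bar{A}_\alpha$, eventually forking over $\bar{A}_\alpha$ and \emph{realized by a tuple of $A_\kappa$}.

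\begin{itemize}
\item There are at most $\abs{A_\alpha}+\abs{T}<\kappa$ such formulas over $\bar{A}_\alpha\acl(d)$. By regularity, chosen realizations of all of them lie in some later $A_{\alpha'}$.
\item Iterating $\omega$ times gives $i_0<i_1<\cdots$ with $\bigcup_j A_{i_j}=A_\gamma$ by continuity, where $\gamma=\sup_j i_j<\kappa$.
\item A witness $\varphi(x,b,c)$ for $A_\kappa\nind^{\lim}_{A_\gamma}d$ has $c\in\bar{A}_{i_j}$ for some $j$. Forking over $\bar{A}_\gamma\cap N'$ implies forking over the smaller $\bar{A}_{i_j}\cap N'$, so this formula was already treated at stage $j$.
\item Hence it is realized by some $a'\in A_{i_{j+1}}\subseteq A_\gamma$. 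But a formula satisfied by a tuple from the base cannot fork over the base (existence), which is a contradiction.
\end{itemize}

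This is the paper's argument, and it needs no rank bounds. Your Fodor step could run it equally well once the sides are swapped: fix the witnessing formula on a stationary set, then pick a later stage that contains an earlier realization.
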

\begin{proof}
    Let $\vect{A_i:i<\kappa}$ as in the hypothesis and suppose for a contradiction there is some $d$ for which $d\nind^{\lim}_{A_\alpha}A_\kappa$ for all $\alpha<\kappa$. By symmetry of $\ind^{\lim}$, $A_\kappa\nind^{\lim}_{A_\alpha}d$ for any $\alpha<\kappa$. We will begin by extracting a countable subsequence of the $A_\alpha$'s as follows:

    Let $A_{i_0}=A_0$. Given $A_{i_j}$, we claim there is an $A_{i_{j+1}}=A_\alpha$ for some $\alpha<\kappa$ such that every formula witnessing dependence of $d$ and $A_\kappa$ over $A_{i_j}$ is realized in $A_{i_{j+1}}$. Since $A_\kappa\nind_{A_{i_j}}d$, by strong finite character there exists a formula over $\bar{A}_{i_j}d$ realized in $A_\kappa$ witnessing this dependence. For each such formula, we take a realization in $A_\kappa$, obtaining at most $\abs{A_{i_j}}\abs{T}$ many realizations. Since each $A_\alpha$ (so in particular, $A_{i_j}$) has cardinality $<\kappa$, the sequence of realizations of formulas witnessing dependence over $A_{i_j}$ cannot be cofinal in $\kappa$. Therefore we can find some $A_\alpha$ in our sequence containing all of them. Let this be our $A_{i_{j+1}}$.

    Take $A_{i_\omega}=\bigcup_{j<\omega}A_{i_j}$. Then by symmetry $A_\kappa\nind^{\lim}_{A_{i_\omega}}d$. Let $\varphi(x,y,z)$ be a formula and $a\in A_\kappa$, $b=d$, $c\in\bar{A}_{i_\omega}$ witnessing dependence as given by strong finite character. By construction there is some $j<\omega$ such that $c\in\bar{A}_{i_j}$. Then since $\varphi(x,b,c)$ is a formula over $A_{i_j}d$ realized in $A_\kappa$, choice of $A_{i_{j+1}}$ guarantees that $\varphi(x,b,c)$ is realized in  $A_{i_{j+1}}\subset A_{i_\omega}$. So in fact $A_{i_\omega}\nind^{\lim}_{A_{i_\omega}}d$, which via symmetry is a contradiction to existence.
\end{proof}

\begin{remark}\label{rem:LC-SFC}
    The only features of limit independence used in the above proof are monotonicity, symmetry, existence, and strong finite character so it is natural to conclude that these four properties together imply local character. However we do need to be a bit careful with this claim\textemdash the version of strong finite character we are using is the one from the axioms of the approximation in which we obtain a formula witness to dependence between \textit{arbitrary sets}. In general, strong finite character is typically defined for \textit{tuples} on the left in which case the above proof will not immediately follow. This distinction clarifies why $\ind^\Gamma$ in $T_\infty$ does not satisfy local character despite having these four properties\textemdash in that context, strong finite character is only defined on tuples.
\end{remark}

\begin{corollary}\label{cor:simple}
    Let $\M\models T$ be $\abs{T}^+$-saturated and approximated by a directed system of substructures of $\M$ each with a simple theory in which $\ind^f$ satisfies the independence theorem over algebraically closed sets. Then $T$ is simple.
\end{corollary}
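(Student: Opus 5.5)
The plan is to use the Kim–Pillay-style characterization of simplicity. If some independence relation on a sufficiently saturated model satisfies invariance, monotonicity, base monotonicity, transitivity, normality, extension, finite character, symmetry, local character, and the Independence Theorem over models (or over algebraically closed sets), then $T$ is simple and that relation coincides with nonforking independence. An alternative route goes through the $\nsop_1$ framework. By \cite{chernikov2016model}, $\ind^{\lim}$ strengthens Kim-independence. Kim–Ramsey's criterion then says an $\nsop_1$ theory is simple exactly when Kim-independence satisfies base monotonicity, or local character in the form above. I will take the Kim–Pillay route, since all the needed axioms are already available.

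The first step is to collect the properties of $\ind^{\lim}$. By \cite[Theorem 3.3.2]{harrison2013independence}, since every element of $\cH$ has a simple theory, $\ind^{\lim}$ satisfies invariance, monotonicity, base monotonicity, transitivity, normality, extension, finite character, and symmetry.

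The second step supplies the Independence Theorem. Under the hypothesis that $\ind^f$ satisfies it over algebraically closed sets in each element of $\cH$, \cite[Proposition 3.3.8]{harrison2013independence} gives the Independence Theorem for $\ind^{\lim}$ over algebraically closed sets, and in particular over models. If that proposition genuinely needs optional conditions 6 and 7, I would add them to the hypotheses, or note that they are implicit in the intended reading of ``approximated.''

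The third step is local character, which is exactly Lemma \ref{lem:localchar}. It gives the chain form of local character: for an increasing continuous chain $\vect{A_i:i<\kappa}$ with union of size $\kappa$, some $\alpha$ satisfies $d\ind^{\lim}_{A_\alpha}A_\kappa$. This form is what Kim–Ramsey use to characterize simplicity among $\nsop_1$ theories. Alternatively, together with finite character and base monotonicity it yields the usual bounded local character needed by Kim–Pillay.

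The main obstacle is saturation. Kim–Pillay is formulated in a monster model, but here $\M$ is only $\abs{T}^+$-saturated and $\ind^{\lim}$ is defined relative to this particular $\M$ and $\cH$. I would deal with this in one of two ways.

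The first way is to avoid Kim–Pillay altogether and show directly that no formula has the tree property. Suppose $\varphi(x,y)$ has the tree property. By $\abs{T}^+$-saturation, realize in $\M$ a countable dividing chain: a sequence $(b_i)_{i<\omega}$ and $d$ with $\varphi(d,b_i)$ $k$-dividing over $b_{<i}$. Next, use the stabilization of non-forking and convergence of truth value to see that dividing in $\M$, witnessed by a finite configuration, is eventually dividing in the elements of $\cH$. This gives $d\nind^{\lim}_{b_{<i}}b_i$ for each $i$.

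Then stretch the sequence to length $\kappa=\abs{T}^+$. Saturation does allow this: the types involved are over fewer than $\kappa$ parameters, realized step by step, and compactness applied along the chain gives the needed witnesses. Setting $A_i=b_{<i}$, transitivity and monotonicity give $d\nind^{\lim}_{A_\alpha}A_\kappa$ for every $\alpha$, contradicting Lemma \ref{lem:localchar}.

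The delicate point is transferring dividing in $\M$ to eventual forking in $\cH$. This is exactly where the Independence Theorem hypothesis is used, via Kim's lemma for Kim-independence: since $\ind^{\lim}$ strengthens $\ind^K$, we get $\ind^{\lim}\Rightarrow\ind^K$. Then local character for $\ind^K$ over chains forces $\ind^K=\ind^f$, and hence simplicity.
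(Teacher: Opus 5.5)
Your proposal has a genuine gap at the step you yourself flag as delicate. You claim that stabilization of non-forking and convergence of truth value turn a dividing chain in $\M$ (with $\varphi(d,b_i)$ $k$-dividing over $b_{<i}$) into $d\nind^{\lim}_{b_{<i}}b_i$. Neither condition gives this.

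\begin{itemize}
\item Convergence of truth value transfers single formulas at finite tuples, one at a time and with no uniformity. Dividing, by contrast, needs an infinite indiscernible, $k$-inconsistent sequence inside every sufficiently large $\N$.
\item Stabilization of non-forking only says that one of the two alternatives eventually holds. It does not say which one.
\end{itemize}

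The transfer actually fails for approximations satisfying both conditions. In the $T^*_{\feq}$ case study of Section~\ref{sec:dim}, $E_p(x,c)$ divides over $p$ in the limit: take a $p$-indiscernible sequence of $c$'s in distinct $E_p$-classes. But forking in each $\M_n$ is trivial, so $E_p(x,c)$ eventually does not fork over $p$. This is exactly the ``forking in the limit that cannot be witnessed in the approximation'' discussed there.

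Moreover, the transfer carries essentially the whole content of the corollary. It would say that $\ind^{\lim}$ implies non-dividing independence, and then local character of $\ind^{\lim}$ would finish the proof by itself.

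Appealing to Kim-independence does not repair this. The relation $\ind^{\lim}\Rightarrow\ind^K$ says that Kim-dividing over models produces $\nind^{\lim}$. That is the wrong kind of dependence for your chain: ordinary dividing over the arbitrary sets $b_{<i}$ need not be Kim-dividing, and that difference is precisely what separates strictly $\nsop_1$ theories from simple ones.

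Your final sentence, that chain local character of $\ind^K$ forces $\ind^K=\ind^f$, is a separate argument. It rests on a characterization of simplicity that you neither prove nor cite.

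What the paper supplies at this point is an equality rather than an inclusion:
\begin{enumerate}
\item From $\ind^{\lim}\Rightarrow\ind^K$ and $T$ being $\nsop_1$, transitivity and local character of $\ind^{\lim}$ (Lemma~\ref{lem:localchar}) give $\ind^{\lim}=\ind^K$ by \cite[Theorem 6.1]{chernikov2023transitivity}.
\item Base monotonicity of $\ind^{\lim}$, which you list but never use, then passes to $\ind^K$.
\item \cite[Proposition 8.8]{kaplan2020kim} then yields simplicity.
\end{enumerate}

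Your saturation concern applies equally to these monster-model results. The paper applies them to $\ind^{\lim}$ on the $\abs{T}^+$-saturated $\M$ without comment. At that same level of rigor, the Kim--Pillay route you first announced would have been a legitimate and more direct alternative that bypasses the $\nsop_1$ machinery. It consists of your first three steps, with the chain form of local character converted to the bounded form essentially via finite character and monotonicity. The problem is that you abandoned it for an argument whose key step fails.
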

\begin{proof}
    We have that $\ind^{\lim}$ strengthens $\ind^K$ and $T$ is $\nsop_1$. Since $\ind^{\lim}$ additionally satisfies transitivity and local character, in fact $\ind^{\lim}=\ind^K$ by \cite[Theorem 6.1]{chernikov2023transitivity}. Then since $\ind^{\lim}$ is base monotone, $\ind^K$ is base monotone in $T$ and $T$ is simple \cite[Proposition 8.8]{kaplan2020kim}. 
\end{proof}

\begin{remark}
    It is worth mentioning briefly why the previous corollary does not contradict $T_\infty$ being strictly $\nsop_1$. This is because $\ind^{\lim}$ does not have the full strength of strong finite character in the $T_\infty$ example: per \cite[Lemma 3.4.15]{harrison2013independence}, $\ind^{\lim}$ satisfies the strong finite character condition for singletons on the left, finite dimensional sets on the right, and small sets in the base. This condition is used in the arguments for several of the properties of limit independence, including base monotonicity, so those properties will not necessarily hold for $\ind^{\lim}$ outside of these specific conditions. Further, it was conjectured in \cite[Remark 3.4.17]{harrison2013independence} that the strong finite character condition would not always hold for finite tuples over an infinite-dimensional base. Using the relationship of $\ind^{\lim}$ and $\ind^{\Gamma}$ from \cite[Proposition 3.4.18]{harrison2013independence}, Remark \ref{rem:LC-SFC} suggests that the strong finite character condition does not hold in general; otherwise it and thus $\ind^{\Gamma}$ would satisfy local character, a contradiction to \cite[Theorem 12.2.2]{granger1999stability}.
\end{remark}

As demonstrated by this corollary, when we approximate strictly $\nsop_1$ theories by substructures with simple theories limit independence will not be the right notion to recover Kim-independence. However, it is still possible to approximate several well-known $\nsop_1$ examples via simple theories under this framework. In the next section we will explore these examples in detail.

For some of the examples in Section \ref{sec:examples} we will have need to remove the condition for automorphism invariance in order to get a directed system. The following is an attempt to reconcile this issue by identifying the conditions under which limit independence can be considered via a filtration of an automorphism invariant system, and does not depend on choice of filtration.

\begin{definition} Suppose $\cH$ is a (not necessarily directed) collection of substructures of $\M$ satisfying covering and automorphism invariance.
    \begin{itemize} 
        \item We say $\cH'\subseteq\cH$ is a \textbf{nice filtration of $\cH$} if $\cH'$ is a directed system satisfying conditions 1 and 3-7. 
        \item We say $\cH$ is \textbf{filtration invariant} if whenever $\cH_1,\cH_2\subseteq\cH$ are nice filtrations of $\cH$, for any $A,B,C\subseteq M$ we have $A\ind^{\lim}_CB$ in the context of $\cH_1$ if and only if $A\ind^{\lim}_CB$ in the context of $\cH_2$.
        \item We say $\cH$ is \textbf{nice} if it is filtration invariant and the class of nice filtrations of $\cH$ is nonempty and closed under automorphisms of $\M$.
    \end{itemize}
\end{definition}

\begin{theorem}\label{thm:nice-filtration}
    Let $\M$ be strongly homogeneous. If $\cH$ approximating $\M$ is nice, then we can approximate $\M$ via any nice filtration of $\cH$ (which may not satisfy condition 2) and maintain all the properties of $\ind^{\lim}$.
\end{theorem}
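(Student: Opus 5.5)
Fix a nice filtration $\cH'\subseteq\cH$. By definition $\cH'$ is a directed system that satisfies conditions 1 and 3--7 of Definition \ref{def:approximation}, so it approximates $\M$ in every respect except possibly automorphism invariance. The plan is therefore to find every place in Harrison-Shermoen's proof of \cite[Theorem 3.3.2]{harrison2013independence} (and of \cite[Proposition 3.3.8]{harrison2013independence}) where condition 2 is used, and to replace each such use with filtration invariance together with closure of the class of nice filtrations under $\mathrm{Aut}(\M)$.

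The only property of $\ind^{\lim}$ whose proof should genuinely need condition 2 is \emph{invariance}: if $\sigma\in\mathrm{Aut}(\M)$ and $A\ind^{\lim}_CB$, then $\sigma(A)\ind^{\lim}_{\sigma(C)}\sigma(B)$. Properties such as extension also rely on moving sets by automorphisms (through saturation and strong homogeneity of $\M$), so they reduce to invariance. I would prove invariance for $\cH'$ in three steps.
\begin{enumerate}
\item Apply $\sigma$ to the whole filtration. The system $\sigma\cH'=\{\sigma(\N):\N\in\cH'\}$ is again a nice filtration, by the closure hypothesis in the definition of nice.
\item Use that $\sigma\restriction\N:\N\to\sigma(\N)$ is an isomorphism of $\lang$-structures. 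It preserves forking inside $\N$, and it carries $\acl^\M$-closures to $\acl^\M$-closures, so $\bar C\cap N$ is sent to $\overline{\sigma(C)}\cap\sigma(N)$. Since $\sigma$ also respects inclusion of members of the system, directly from Definition \ref{def:limind} we get that $A\ind^{\lim}_CB$ computed in $\cH'$ holds if and only if $\sigma(A)\ind^{\lim}_{\sigma(C)}\sigma(B)$ computed in $\sigma\cH'$.
\item Apply filtration invariance: $\ind^{\lim}$ computed in $\sigma\cH'$ agrees with $\ind^{\lim}$ computed in $\cH'$, which gives invariance for $\cH'$.
\end{enumerate}
The same argument shows that "eventually forks over $C$" is automorphism-invariant. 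This is the form of invariance used inside the arguments for symmetry and the Independence Theorem.

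Next I would check that every other property (monotonicity, base monotonicity, transitivity, normality, finite and strong finite character, symmetry, and the Independence Theorem under conditions 6 and 7) is proved in \cite{harrison2013independence} using only directedness, covering, convergence of truth value, stabilization of non-forking, strong finite character of $\nind^{\lim}$, and simplicity of the members. All of these are available for $\cH'$. Wherever one of these proofs realizes a type by an automorphism of $\M$, that is a use of strong homogeneity of $\M$ (assumed here) followed by invariance, which was established above. Finally, because $\cH$ approximates $\M$, filtration invariance ensures that the resulting relation does not depend on which nice filtration is chosen.

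The main obstacle is this audit: confirming that condition 2 enters Harrison-Shermoen's proofs only through invariance of $\ind^{\lim}$ and of eventual forking. If some argument (extension is the most likely candidate) instead applies an automorphism to a single member $\N\in\cH$ and needs $\sigma(\N)$ to lie in the same system, I would repair it as follows. Pass to the nice filtration $\sigma\cH'$, carry out the argument there, and transfer the conclusion back to $\cH'$ by filtration invariance.
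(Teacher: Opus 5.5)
Your proposal is correct and follows essentially the same route as the paper. Invariance is the only property of $\ind^{\lim}$ that needs condition 2. It is recovered by pushing the nice filtration forward along $\sigma$ (which yields a nice filtration because $\cH$ is nice), transporting the witnesses $\N_a\mapsto\sigma(\N_a)$, and returning to the original filtration via filtration invariance. One caveat: your aside that eventual forking is automorphism-invariant ``by the same argument'' does not follow as stated, since filtration invariance is assumed only for $\ind^{\lim}$ and not for eventual forking of formulas, but the paper's proof does not rely on that claim.
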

\begin{proof}
    The possibly missing condition of a nice filtration $\cH_1$ is automorphism invariance. The only property of $\ind^{\lim}$ that uses automorphism invariance in its proof is invariance\textemdash all other properties hold immediately from the fact that $\cH_1$ satisfies conditions 1 and 3-7.

    To show invariance, suppose $A\ind^{\lim}_CB$ and $(A,B,C)\equiv(A',B',C')$. Let $\sigma$ be an automorphism of $\M$ sending $A$ to $A'$, $B$ to $B'$, and $C$ to $C'$, and let $\cH_2=\{\sigma(\N):\N\in\cH_1\}$. By definition, $\cH_2$ is also a nice filtration of $\cH$.

    Because $A\ind^{\lim}_CB$, there is by definition for each $a\in A$ some $\N_a\in\cH_1$ containing $a$ such that for every $\N\supseteq\N_a$ in $\cH_1$, $a\ind^{\N}_{\bar{C}\cap N}\bar{B}\cap N$. By the same argument as in the original proof of invariance for $\ind^{\lim}$, we let $\sigma(\N_a)=\N_{\sigma(a)}$ witness this same condition for $\sigma(a)\in\sigma(A)=A'$, in the context of $\cH_2$. Then $A'\ind^{\lim}_{C'}B'$ in the context of $\cH_2$.

    Then since $\cH$ is filtration invariant it follows that $A'\ind^{\lim}_{C'}B'$ in the context of $\cH_1$. 
\end{proof}

\section{Examples}\label{sec:examples}

The original example of an approximable theory is the theory of an infinite dimensional vector space over an algebraically closed field with a non-degenerate symmetric or alternating bilinear form, denoted $T_\infty$. This example is detailed in \cite[Section 3.4]{harrison2013independence}, where it is shown that the approximation by substructures with finite-dimensional vector spaces satisfies all seven conditions except strong finite character of $\nind^{\lim}$, which holds only under certain conditions. At the time, this was the only known example of a theory approximated by simple substructures under Harrison-Shermoen's framework. In this section we will show two more $\nsop_1$ examples that can also be approximated in this manner. 

\subsection{Parameterized structures}

The notion of parameterized theories was first introduced by Baudisch in \cite{baudisch2002generic}, where it was considered in a single sort. An alternative two-sorted construction using simple theories obtained as \Fraisse limits satisfying the strong amalgamation property is studied in \cite{chernikov2016model}. This concept was further studied by Bossut in \cite{bossut2024some}.

We will consider a multi-sorted context, with one or more sorts for the objects and one additional sort for the parameters, and as such will restate the axioms from \cite{baudisch2002generic} in this new setting.

The construction follows the same idea as the one-sorted version. Given a theory $T$ in the language $L$ that eliminates the quantifier $\exists^\infty$ we add a new sort $P$ for parameters and define a new language $L_P$ consisting of the following, where each variable $x_i$ is an element of the object sort:
\begin{itemize}
    \item For every $n$-ary relation symbol $R(x_1,\dots,x_n)$ in $L$, we add an $(n+1)$-ary relation symbol $R_P(x_1,\dots,x_n,y)$ where $y$ is an element of the parameter sort.
    \item For every $n$-ary function symbol $f(x_1,\dots,x_n)$ in $L$, we add an $(n+1)$-ary function symbol $f_P(x_1,\dots,x_n,y)$ where $y$ is an element of the parameter sort.
    \item For every constant symbol $c$ in $L$ we add a unary function symbol $c_P(y)$ that takes an element of the parameter sort.
\end{itemize}

We will sometimes denote formulas in the parameterized language by $\varphi_P(\xx,y)$, where $\varphi(\xx)$ is a formula in $L$ and $\varphi_P$ is the formula obtained by replacing all instances of functions, relations, and constants with their $L_P$ equivalent. Given a formula $\varphi(\xx,y)$ in the parameterized language and a fixed parameter $p$, we let $(\varphi(\xx))_p$ denote the formula obtained by fixing $p$ as the input for every symbol in $L_P$. That is, $(\varphi(\xx))_p$ is equivalent to $\varphi_P(\xx,p)$. 

If $\M$ is an $L_P$-structure and $p\in\M$ an element from the parameter sort, let $\M_p$ be the $L$-structure defined on the object sort(s) of $\M$ by fixing $p$ as the parameter input to every symbol in $L_P$. That is, $\M_p\models\varphi(\aaa)$ if and only if $\M\models\varphi_P(\aaa,p)$.

Let $T_P=\{(\forall p)\; (\varphi)_p:\varphi\in T\}$. Then $\M\models T_P$ if and only if for every parameter $p\in\M$ the $L$-structure $\M_p$ is a model of $T$. Under the assumption that $T$ is model-complete and eliminates $\exists^\infty$, then the model companion $T_P^*$ exists. Let $(\varphi_i)_{i<n}$ be a collection of $L$-formulas satisfying the following conditions:
\begin{itemize}
    \item $\varphi_i$ is a conjunction of positive or negative atomic formulas in the unparameterized language.
    \item $T\models\varphi_i(\xx,\yy)\rightarrow \bigwedge_{j\neq k}x_j\neq x_k\land y_j\neq y_k\land\bigwedge x_j\neq y_k$.
\end{itemize}

For each such collection of formulas, we add the following sentences to the theory of $T_P$ to get an axiomatization of $T_P^*$:
\begin{itemize}
    \item $(\forall\yy)(\forall \text{ distinct } p_1,\dots, p_n)\;\left[\bigwedge_{i<n} (\exists^\infty\xx(\varphi_i(\xx,\yy))_{p_i} \rightarrow (\exists\xx)\bigwedge_{i<n}((\varphi_i(\xx,\yy))_{p_i})) \right]$ 
    \item $(\forall\xx)(\exists \text{ distinct }p_1,\dots,p_n)\;\left[\bigwedge_{i<n}(\varphi_i(\xx))_{p_i} \right]$, where the $\varphi_i$ each satisfy the additional condition that $\exists\xx\varphi_i(\xx)$ is consistent with $T$.
\end{itemize}

\begin{remark}\label{rem:paramsorts}
    This is a direct restatement of the one-sorted axiomatization in \cite{baudisch2002generic}, reconstructed for the two-sorted context. The proof that this axiomatizes the model companion is identical. In the case that $T$ is already a multi-sorted theory we can restrict the parameterization to only some of the object sorts and obtain the same results regarding the axiomatization of the model companion.
\end{remark}

The first set of additional sentences states that for any collection of $n$ parameters and $n$ non-algebraic formulas in the unparameterized language, we can find object variables that satisfy the $i$th formula with respect to the $i$th parameter; that is, the parameters act on the object sort independently of one another. The second set states that given any objects and $n$ realizable formulas in the unparameterized language, we can find parameters with respect to which those objects satisfy those formulas; that is, there exist parameters producing all possible behavior.

A natural consideration for the approximation of a parameterized structure is to restrict to finite sets of parameters in the substructures; that is, we let $\cH$ be the collection of substructures obtained by taking all the objects of $\M$ and only finitely many parameters. In general, however, this fails to meet the convergence of truth value condition.

\begin{lemma}
    Suppose $T$ is a theory with $D$-rank $\geq 2$. Then $T_P^*$ cannot be approximated via restriction to finitely many parameters.
\end{lemma}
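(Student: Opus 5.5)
We argue by contradiction, using convergence of truth value in the weak form of Theorem \ref{thm:CTVultraproduct}. Suppose $\cH$ is the system of substructures of $\M\models T_P^*$ consisting of all objects together with finitely many parameters, and suppose it satisfies convergence of truth value. Then for every sentence $\varphi$ with $\M\models\varphi$ there is some $\N_\varphi\in\cH$ with $\N\models\varphi$ for all $\N\supseteq\N_\varphi$ in $\cH$. The plan is to find one sentence true in $\M$ which fails in every structure whose parameter sort is finite. The obvious first candidate, ``there are infinitely many parameters,'' does not work: it is not first order. Instead we exploit the second family of axioms of $T_P^*$, which forces enough parameters to realize arbitrarily many distinct behaviours on a fixed tuple of objects.

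The key step is to use $D$-rank $\geq 2$ to produce a single $L$-formula with infinitely many pairwise inconsistent ``behaviours'' that a single first-order sentence can see. Since $D$-rank $\geq 2$, there is a formula $\psi(x,\bar c)$ of $D$-rank $\geq 2$. So there are a formula $\theta(x,\bar z)$ and parameters $\bar e_0,\bar e_1,\dots$ with the sets $\psi\wedge\theta(x,\bar e_i)$ pairwise disjoint and each of $D$-rank $\geq 1$, in particular infinite. We then want an $L_P$-sentence $\sigma$ of the following form: for all objects $\bar y$ there is a parameter $p$ such that $(\psi(x,\bar y))_p$ is infinite, i.e.\ non-algebraic (expressible since $T$ eliminates $\exists^\infty$), and such that a prescribed configuration holds that a given finite set of parameters cannot all witness simultaneously. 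The precise version we aim for is this. Take $\sigma$ saying that for every object $a$ there is a parameter $p$ with $(\psi(a,\bar y))_p$ holding for a suitable $\bar y$ and $(\theta(a,\bar y'))_p$ with $\bar y'$ chosen so that the pieces differ. By the second axiom scheme (choose $\varphi_i$ realizable with $a$ in a chosen piece), $\M\models\sigma$.

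We next show $\N\not\models\sigma$ whenever $\N$ has finitely many parameters $p_1,\dots,p_k$. For each $p_j$, the structure $\M_{p_j}$ is a model of $T$. We must find an object $a$ that lies outside every set of the required configuration relative to every $p_j$. This is possible because each such set, relative to a fixed $p_j$, either is small or can be avoided: the pieces are pairwise disjoint and infinite, so a finite union of ``bad'' conditions cannot cover everything. Rank $\geq2$, as opposed to rank $1$, is exactly what provides infinitely many disjoint infinite pieces. Rank $1$ would only give cofinite/finite dichotomies, which finitely many parameters might already handle.

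The main obstacle is choosing $\sigma$ carefully. It must be one sentence in which only parameters, not object tuples, are forced to vary, and its failure for finitely many $p_j$ must hold uniformly in $\N$. Here $\N$ and $\M$ share all objects, so the counting must happen inside objects of $\M$. I expect the cleanest choice is $\sigma:=\forall a\,\exists p\,[(\psi(a,\bar c'))_p\wedge(\exists^\infty x\,(\psi\wedge\theta(x,a))_p)\wedge\dots]$, with the bound on $D$-rank used to ensure that for fixed $p_j$ only a ``rank $\le1$'' part of the $a$'s works. The union over $j\le k$ of these parts then misses some $a$. Having fixed $\sigma$, the contradiction with convergence of truth value is immediate.
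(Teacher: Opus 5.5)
Your outer shape is right: a statement $\forall a\,\exists p\,(\cdots)$ that holds in $\M$ by the second axiom scheme (every admissible behaviour of a tuple of objects is realized by some parameter) and fails in every $\N\in\cH$. It is exactly the negation of the formula the paper uses. The second half, however, is not a proof. You never fix $\sigma$, since your candidate ends in ``$\wedge\dots$''. Your argument for $\N\not\models\sigma$ does not go through either. You argue that the pieces are pairwise disjoint and infinite so finitely many bad conditions cannot cover the object sort, or that for each $p_j$ only a ``rank $\le 1$'' set of $a$'s works. But the sets you must avoid are defined in the different $L$-structures $\M_{p_1},\dots,\M_{p_k}$ on the same object set. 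There is no common rank or counting in which a finite union of sets that are ``small'' in different structures is still small. Only an axiom relating different parameters can rule out that a set small in $\M_{p_1}$ has small complement in $\M_{p_2}$. Two side errors: $D$-rank $\geq 2$ gives $k$-inconsistent instances, not pairwise disjoint ones, and $D$-rank $1$ does not mean every definable set is finite or cofinite.

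The missing idea is the \emph{first} axiom scheme of $T_P^*$. Distinct parameters act independently, so finitely many conditions $(\varphi_j(x))_{p_j}$, each non-algebraic in $\M_{p_j}$, have a common realization. With this, no infinite family of pieces is needed. $D$-rank $\geq 2$ is used only to get one $L$-formula $\varphi(x;b_0)$ that is infinite and co-infinite. The paper takes $\exists x\,\forall p\,(\varphi(x;b_0))_p$. An object parameter $b_0$ is allowed in convergence of truth value, or it can be universally quantified away. Each $\N\in\cH$ contains all objects, so $\N_p=\M_p$ for $p\in N$. The first scheme, applied to the finitely many distinct $p\in N$, then shows the formula holds in every $\N$. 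In $\M$, the second scheme gives, for each object $a$, a parameter $p$ with $\neg(\varphi(a;b_0))_p$. So the formula fails in $\M$, contradicting convergence of truth value. To make this airtight, require $x\notin b_0$ and let $p$ range only over parameters for which $(\varphi(x;b_0))_p$ is non-algebraic; this is expressible because $T$ eliminates $\exists^\infty$. Otherwise a coordinate of $b_0$ could witness the formula in $\M$, or a parameter making $(\varphi(x;b_0))_p$ algebraic would block the first scheme in $\N$.
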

\begin{proof}
    The assumption that the $D$-rank of $T$ is $\geq 2$ implies that there is an $L$-formula $\varphi(x;y)$ and an indiscernible sequence $(b_i)_{i<\omega}$ such that $\{\varphi(x;b_i):i<\omega\}$ is $k$-inconsistent for some finite $k$ and the set defined by $\varphi(x;b_i)$ is infinite. 
    
    Consider the formula $\psi$ given by $(\exists x)(\forall p)(\varphi(x;b_0))_p$. From the axioms for parameterized theories, the fact that $\varphi(x;b_0)$ is nonalgebraic guarantees that for any finite set of parameters $p_1,\dots,p_n$ the set $\{(\varphi(x;b_0))_{p_i}:i<n\}$ is consistent. Then for every $\N\in\cH$, since there are only finitely many parameters in $\N$, the set $\{(\varphi(x;b_0)_p:p\in N\}$ is consistent, so $\N\models\psi$.

    Because $\{\varphi(x;b_i):i<\omega\}$ is $k$-inconsistent it follows that $\exists x\neg\varphi(x,b_0)$ is consistent with $T$. Again from the axioms for parameterized theories, we have in $\M$ that for every $a$ there exists a $p$ such that $\neg(\varphi(a,b_0))_p$ holds. Then in $\M$ the set $\{(\varphi(x;b_0))_p:p\in M\}$ is inconsistent, so $\M\not\models\psi$. Thus $\M\models\neg\psi$. However, $\N\models\psi$ for every structure $\N\in\cH$ so $\psi$ cannot be eventually false in $\cH$ and convergence of truth value fails.
\end{proof}

We will now give an example of a parameterized theory that fits the framework.

\subsubsection{$T^*_{\feq}$: the generic theory of a parameterized equivalence relation.} $T^*_{\feq}$ is the two-sorted theory obtained by parameterizing the theory of an equivalence relation with infinitely many classes, all of which are infinite. The language is given by the single ternary relation $E_x(y,z)$, where $x$ is from the parameter sort and $y$ and $z$ from the object sort. The theory states that $E_p(y,z)$, for any fixed parameter $p$, defines an equivalence relation on the object sort with infinitely many classes, all of which are infinite. Furthermore, the genericity condition guarantees that for any two distinct elements of the object sort there is a parameter that distinguishes them. Note that this is a special case of the parameterized theories described above.

We write $\M=(A,B)$ for a model of $T^*_{\feq}$, where $A$ is the set of objects and $B$ the parameters. Given $p\in B$, we write $A_p$ for the structure in the unparameterized language with domain $A$ with the equivalence relation $E(x,y)$ interpreted by $E_p(x,y)$.

We will approximate $\M\models T_{\feq}^*$ by substructures where we restrict the number of classes in the equivalence relation with respect to each parameter. 

The age of $\M\models T^*_{\feq}$ is the \Fraisse class $K_{\feq}=\{M=(A,B):\forall b\in B\; A_b\text{ is an equivalence relation}\}$. We use the filtration of $K_{\feq}$ by the \Fraisse classes $K_n$ from \cite[Section 4.2]{kruckman2019disjoint}, defined as follows: for each $n<\omega$ $K_n\subset K_{\feq}$ is the set \[K_n=\{M=(A,B):\forall b\in B\; A_b\text{ is an equivalence relation with at most $n$ classes}\}.\]

Let $\M_n$ denote the \Fraisse limit of $K_n$ and let $T_n=\Th(\M_n)$. Then $T_n$ is the theory of a parameterized equivalence relation with $n$ infinite classes with respect to each parameter. 

To build our approximation to be homogeneous, we need slightly more structure. We extend the language by countably many unary predicates to get $\lang_Q=\lang_{\feq}\cup\{Q_i,R_i\}_{i<\omega}$. We say $(Q_n,R_n)$ \textit{imposes $K_n$-structure} on $M=(A,B)$ if $Q_n^M\subseteq A, R_n^M\subseteq B$, and for every $b\in R_n(B)$, $Q_n(A_b)$ represents at most $n$ classes of the equivalence relation. Now let
\[K_Q=\{M:M\restriction L_{\feq}\in K_{\feq}\text{ and } \forall i<\omega, (Q_i^M,R_i^M)\subseteq(Q_{i+1}^M,R_{i+1}^M)\text{ and } (Q_i,R_i)\text{ imposes $K_i$-structure on $M$}\}.\]

\begin{lemma}
    $K_Q$ is a \Fraisse class.
\end{lemma}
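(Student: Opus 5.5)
We verify the three Fraïssé properties for $K_Q$ as a class of finite $\lang_Q$-structures: hereditary property (HP), joint embedding (JEP), and amalgamation (AP). We also check closure under isomorphism and that there are only countably many isomorphism types. The language is relational, so every subset of a structure carries an induced substructure. Hence there are countably many isomorphism types of finite structures, even with countably many unary predicates. JEP follows from AP over the empty structure, which lies in $K_Q$. So the work reduces to HP and AP.

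For HP, let $M\in K_Q$ and let $M'\subseteq M$ be a substructure. The restriction of each $E_b$ to $A'$ is still an equivalence relation. The inclusions $(Q_i^{M'},R_i^{M'})\subseteq(Q_{i+1}^{M'},R_{i+1}^{M'})$ are inherited, since each set is just the intersection of the corresponding set in $M$ with $M'$. For $b\in R_n(B')$, the set $Q_n(A'_b)$ is a subset of $Q_n(A_b)$. It therefore meets at most as many $E_b$-classes as $Q_n(A_b)$ does, namely at most $n$. So $M'\in K_Q$.

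For AP, let $f_1\colon C\to M_1$ and $f_2\colon C\to M_2$ be embeddings in $K_Q$, and identify $C$ with its images. We build $D$ as the disjoint union over $C$ in both sorts, interpreting each $Q_i$ and $R_i$ as the union of its interpretations in $M_1$ and $M_2$. The inclusion chain is preserved because it holds in each piece. The key step is defining $E_b$ on $A^D=A^{M_1}\cup_{A^C}A^{M_2}$ for each parameter $b$, and we split into two cases.

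\begin{itemize}
\item[(a)] If $b\in B^C$, then $E_b$ is given on both sides. We take $E_b$ to be the transitive closure of $E_b^{M_1}\cup E_b^{M_2}$. This is the standard free amalgam of equivalence relations: two classes merge exactly when they share a point of $A^C$. Since $C$ is a substructure of both $M_1$ and $M_2$, this closure restricts correctly to each $M_j$. A class of $M_1$ and a class of $M_2$ are identified only through common points of $C$, and on $C$ both restrictions agree with $E_b^C$, so no new identifications occur inside $M_1$ or $M_2$.
\item[(b)] If $b\in B^{M_1}\setminus B^C$, then $E_b$ is defined only on $A^{M_1}$, and we must extend it to the new objects $A^{M_2}\setminus A^C$. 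The case $b\in B^{M_2}\setminus B^C$ is symmetric.
\end{itemize}

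I expect the main obstacle to be the counting condition on classes, not the equivalence-relation axioms. In case (a), merging classes can only decrease the number of classes met by $Q_n(A^D_b)$ within each side. But the union can meet classes from both sides that are not linked through $C$. So I would check the following. Suppose $b\in R_n^C$ and $Q_n(A^{M_2}_b)$ meets an $E_b$-class $X$ that does not meet $C$. Could the total number of classes in $Q_n(A^D)$ then exceed $n$? This is a genuine issue, and it is the place where a naive free amalgam fails.

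My fix is to place each new element of $A^{M_2}\setminus A^{M_1}$ into an existing class rather than keep it separate. More precisely, I would amalgamate by choosing, for each $b\in R_n$, a map from the $E_b^{M_2}$-classes not meeting $C$ into $E_b$-classes of $M_1$. Classes containing $Q_n$-points must be sent to classes containing $Q_n$-points whenever $M_1$ already has $n$ such classes. This is consistent because no new relations among $C$ are imposed. So I would take $D$ to be the amalgam in which the classes of $M_2$ not meeting $C$ are glued onto classes of $M_1$, respecting the least $n$ with $b\in R_n$. Since $R_n\subseteq R_{n+1}$, bounding $Q_n$-points to $n$ classes for the least such $n$ suffices for larger $n$ as well, given $Q_n\subseteq Q_{m}$.

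In case (b), any extension works for the equivalence-relation axioms. We put each point of $A^{M_2}\setminus A^C$ into a class that already contains a $Q_n$-point, or into any class if there is none. This way the number of classes meeting $Q_n$ never increases beyond what $M_1$ allows.

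Finally, we check that $M_1$ and $M_2$ embed in $D$. This requires that the gluing never identifies two distinct classes of one side. Here we must use gluing only of classes not meeting $C$, and the map may be many-to-one only across sides.
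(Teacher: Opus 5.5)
You correctly identify where the danger lies, but your fix for case (a) does not work, and no fix can work for $K_Q$ as defined. Your gluing only moves $E_b^{M_2}$-classes that miss $C$. The overcount, however, can come from classes that \emph{meet} $C$: one base class may acquire $Q_n$-points only in $M_1$, and a different base class only in $M_2$. Distinct base classes stay distinct in every amalgam, so both are counted however the new classes are identified. Your rule also conflicts with your own embedding requirement. If every $Q_n$-class of $M_1$ meets $C$, then sending a new $Q_n$-class of $M_2$ to one of them merges two distinct $E_b^{M_2}$-classes, and $M_2$ no longer embeds. So ``no new relations among $C$ are imposed'' is not the relevant check. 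The underlying problem is that a finite structure does not determine which of its $E_b$-classes are among the at most $n$ classes meeting $Q_n$, and two extensions can decide this incompatibly.

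Here is a concrete failure. Let $C$ have objects $u_1,u_2$ and one parameter $p$, with $\neg E_p(u_1,u_2)$, $p\in R_i$ for all $i\geq 1$, $Q_1^C=\varnothing$, $Q_i^C=\{u_1,u_2\}$ for $i\geq 2$, and $Q_0=R_0=\varnothing$. Let $M_1$ add an object $x_1$ with $E_p(x_1,u_1)$ and $x_1\in Q_i$ for all $i\geq 1$. Let $M_2$ add an object $x_2$ with $E_p(x_2,u_2)$ and $x_2\in Q_i$ for all $i\geq 1$. All three structures lie in $K_Q$: in each, $Q_1$ meets at most one $E_p$-class and every $Q_i$ at most two. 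Now take any $D\in K_Q$ into which $M_1$ and $M_2$ embed over $C$. Then $p\in R_1^D$ and $x_1,x_2\in Q_1^D$, while $x_1,x_2$ are $E_p$-inequivalent because $u_1,u_2$ are. So $Q_1^D$ meets two classes, which is forbidden.

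Thus $K_Q$ fails AP (not merely SAP), and the statement is false as written. The paper's own proof, which numbers classes and merges those with equal numbers, has the same defect. Its $m_B,m_C$ are defined as the numbers of all $E_p$-classes of the two extensions, but are then used as the numbers of classes meeting $Q_n$. In this example the amalgam has $\max(m_B,m_C)=2$ classes, both meeting $Q_1$. To get a genuine Fra\"iss\'e class one must enrich the structures so that the base already fixes its $Q_n$-classes. One option is a relation recording, for $p\in R_n$, which objects lie in an $E_p$-class meeting $Q_n$.

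Two smaller points. First, the countability of isomorphism types comes from the nesting of the $Q_i$ and $R_i$, not from the language being relational. Second, in case (b) all of $A^{M_2}\setminus A^C$ should go into a single class. Otherwise, when $M_1$ has no $Q_n$-point, new $Q_n$-points can land in several classes and break the bound.
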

\begin{proof}
    HP is clear and since the empty structure is a model in $K_Q$, JEP follows from SAP. So we just need to show SAP.

    Suppose $(A,D),(B,E),$ and $(C,F)$ are in $K_Q$ and we have embeddings:
    \[\begin{tikzpicture}
        \node (a) at (0,0) {$(A,D)$};
        \node (b) at (2,1) {$(B,E)$};
        \node (c) at (2,-1) {$(C,F)$};

        \draw[->] (a) edge["$g$"] (b);
        \draw[->] (a) edge["$h$"] (c);
    \end{tikzpicture}\]
    We can assume $E\cap F=D$ and the embeddings are just inclusions on the parameter sort by moving $E$ and $F$ over $D$.

    Let $G=A\cup (B\setminus g(A))\cup (C\setminus h(A))$ and $H=D\cup (E\setminus g(D))\cup (F\setminus h(D))$, with $(B,E)$ and $(C,F)$ embedding in the obvious way. We build the equivalence relation on $(G,H)$ following the same method as in the proof of \cite[Theorem 4.5]{kruckman2016infinitary}. Given $p\in H$, if $p\in D$ then it already defines equivalence relations on $B$ and $C$. Enumerate the $E_p$-classes in $A$ by $1,\dots,\ell$ and the additional unnumbered $E_p$-classes in $B$ and $C$ by $\ell+1,\dots,m_B$ and $\ell+1,\dots,m_C$ respectively. Then on $G$ we define $E_p$ to have $\max(m_B,m_C)$ classes by merging those assigned the same number. If $p\notin D$, then without loss of generality say $p\in E$ and assign all elements of $C\setminus g(A)$ to a single existing $E_p$-class. We have that the following diagram commutes in $L_{\feq}$:
    \[\begin{tikzpicture}
        \node (a) at (0,0) {$(A,D)$};
        \node (b) at (2,1) {$(B,E)$};
        \node (c) at (2,-1) {$(C,F)$};
        \node (g) at (4,0) {$(G,H)$};

        \draw[->] (a) edge["$g$"] (b);
        \draw[->] (a) edge["$h$"] (c);
        \draw[->] (b) edge["$g'$"] (g);
        \draw[->] (c) edge["$h'$"] (g);
    \end{tikzpicture}\]

    For each $n<\omega$, let $Q_n(G)=g'(Q_n(B))\cup h'(Q_n(C))$ and $R_n(H)=g'(R_n(E))\cup h'(R_n(F))$. We claim that the diagram then commutes in $L_Q$. It remains to show that each $(Q_i,R_i)$ imposes $K_i$-structure on $(G,H)$.

    Fix $n<\omega$ and let $p\in R_n(H)$. If $p\in R_n(D)$, then $p$ defines equivalence relations on $Q_n(B)$ and $Q_n(C)$ with at most $n$ classes. Due to the construction of the equivalence relation on $G$, the number of classes in the amalgam $Q_n(G)$ is $\max(m_B,m_C)$ where $m_B$ and $m_C$ are the number of $E_p$-classes in $Q_n(B)$ and $Q_n(C)$ respectively. Both $m_B$ and $m_C$ are necessarily $\leq n$, so $Q_n(G)$ has at most $n$ classes.

    If instead $p\notin R_n(D)$, then without loss of generality suppose $p\in R_n(E)$. Again by construction, the number of classes in the amalgam $Q_n(G)$ will equal the number of classes in $Q_n(B)$, as all new objects from $Q_n(C)$ are placed into one of the existing classes already defined on $Q_n(B)$. Therefore we cannot increase the number of classes, so since $Q_n(B)$ had at most $n$ classes, the same holds for $Q_n(G)$.

    Since $n$ was arbitrary, this holds for all $n<\omega$, and we conclude SAP for $K_Q$.
\end{proof}

Let $\M^Q$ denote the \Fraisse limit of $K_Q$. Note that the $\lang_{\feq}$-structure given by $(Q_n(\M^Q),R_n(\M^Q))$ is a model of $T_n$. 

\begin{lemma}
    $\M^Q\restriction L_{\feq}\models T^*_{\feq}$.
\end{lemma}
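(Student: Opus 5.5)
The goal is to show that the reduct $\M^Q\restriction L_{\feq}$ is the \Fraisse limit of $K_{\feq}$. Since $T^*_{\feq}$ is the theory of that \Fraisse limit (the generic theory, whose models are the existentially closed models), this will settle the lemma. I will check the standard characterization: a countable structure is the \Fraisse limit of $K_{\feq}$ if and only if its age is $K_{\feq}$ and it has the extension property. Equivalently, if the target is stated axiomatically, I will check the two families of Baudisch-type axioms directly; the extension property delivers both.

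\emph{Age.} Every finite substructure of $\M^Q\restriction L_{\feq}$ is a finite structure in which each $E_p$ is an equivalence relation, so it lies in $K_{\feq}$. Conversely, take a finite $M=(A,B)\in K_{\feq}$. Expand it to an $L_Q$-structure by setting $Q_i=R_i=\emptyset$ for all $i$. This expansion trivially satisfies the chain condition and the imposition condition, so it lies in $K_Q$ and therefore embeds into $\M^Q$. Taking the reduct, $M$ embeds into $\M^Q\restriction L_{\feq}$.

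\emph{Extension property.} Let $M\subseteq N$ be finite structures in $K_{\feq}$, and let $f\colon M\to\M^Q$ be an $L_{\feq}$-embedding. Let $M'$ be the $L_Q$-substructure of $\M^Q$ on $f(M)$; it is finite, since $L_Q$ has only relation symbols. I need an $L_Q$-expansion $N'$ of $N$ that lies in $K_Q$ and in which $M'$ is an $L_Q$-substructure. To get it, interpret $Q_i^{N'}=Q_i^{M'}$ and $R_i^{N'}=R_i^{M'}$ via $f$, adding no new elements to any predicate. Then the chain condition holds. The imposition condition also holds: for $b\in R_n(N')$ we have $b\in M$, the set $Q_n(N')=Q_n(M')$ is unchanged, and the $E_b$-classes it meets are determined inside $M'$. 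Note that the $E_b$-relation on elements of $M$ is the same in $N$ as in $M$, because $M$ is a substructure. So $N'\in K_Q$, and the extension property of $\M^Q$ yields an $L_Q$-embedding of $N'$ over $M'$. Its reduct is the required $L_{\feq}$-extension of $f$.

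The main obstacle is making sure the target theory $T^*_{\feq}$ really is the theory of the \Fraisse limit of $K_{\feq}$, including infinitely many classes, each of them infinite, and the genericity condition. This follows because those axioms are $\forall\exists$ consequences of the extension property: for any finite configuration one can add a new element to a new class, or to a given class, or add a parameter separating two elements, and each such extension is realized. If the paper's intended axiomatization is the Baudisch-style one, I would instead verify each axiom by building the corresponding finite one-point extension in $K_{\feq}$ and applying the extension property as above. The key point throughout is that leaving new points outside all $Q_i,R_i$ imposes no constraint.
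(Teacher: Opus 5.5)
Your proof is correct and is essentially the paper's argument. The paper invokes \cite[Lemma 2.12]{ramsey2020invariants}, whose two hypotheses are exactly your age computation and your extension step, and it verifies the second with the same construction: the larger structure receives only the predicates $\pi(Q_n(C)),\pi(R_n(C))$ of the image, with no new points added. You unpack that reduct criterion by hand, and you also sketch why the Fra\"iss\'e limit of $K_{\feq}$ models $T^*_{\feq}$, which the paper takes as known.
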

\begin{proof}
    From \cite[Lemma 2.12]{ramsey2020invariants}, we need to show that the following conditions are satisfied:
    \begin{enumerate}
        \item $A\in K_{\feq}$ if and only if there is a $D\in K_Q$ such that $A$ is an $L_{\feq}$ substructure of $D\restriction L_{\feq}$.
        \item If $A,B\in K_{\feq}$, $\pi:A\to B$ is an $L_{\feq}$ embedding, and $C\in K_Q$ with $C=\langle A\rangle_{L_Q}^C$, then there is a $D\in K_Q$ such that $B$ is an $L_{\feq}$ substructure of $D\restriction L_{\feq}$ and an $L_Q$ embedding $\tilde{\pi}: C\to D$ extending $\pi$.
    \end{enumerate}
    Condition 1 follows immediately from the definition of $K_Q$. 

    Let $A,B,C,\pi$ as in condition 2. Here $C$ and $A$ have the same underlying set, so we define $D$ to be the structure with underlying set $B$ and for each $n<\omega$ let $Q_n(D)=\pi(Q_n(C))$ and $R_n(D)=\pi(R_n(C))$. Let $\tilde{\pi}=\pi$. Clearly $\tilde{\pi}$ is an $L_Q$ embedding, so we just need to show that $D\in K_Q$. Let $p\in R_n(D)$ for some $n<\omega$ and consider $Q_n(D)_p$. We know that $\tilde{\pi}^{-1}(p)\in R_n(C)$ and $C\in K_Q$, so the preimage $\tilde{\pi}^{-1}(Q_n(D))=Q_n(C)$ has $\leq n$ classes with respect to $\tilde{\pi}^{-1}(p)$. Then since the restriction of $\tilde{\pi}$ to $Q_n$ and $R_n$ is onto, $Q_n(D)$ must also have $\leq n$ classes with respect to $p$. By definition $D\restriction L_{\feq} = B\in K_{\feq}$, so $D\in K_Q$ and we are done.
\end{proof}

Now we construct our approximation as follows. For each $n<\omega$ let $\M_n\subseteq\M^Q$ be the $L_{\feq}$-substructure identified by the predicates $Q_n,R_n$. The reduct of $\M_n$ to $L_{\feq}$ is a model of $T_n$, and by the nesting condition on the predicates in $K_Q$, we can write them as a chain of substructures of $\M^Q\restriction L_{\feq}$: \[\M_i\subseteq \M_{i+1}\subseteq\dots\subseteq \M^Q\restriction L_{\feq}\] Now let $\M=\bigcup_{n<\omega}\M_n$. Since $(K_n)_{n<\omega}$ is a filtration of $K_{\feq}$ and the chain of $\M_n$'s is an increasing union of the corresponding \Fraisse limits, $\M\models T^*_{\feq}$.

For our approximation, we let $\cH$ be the collection of every substructure of $\M$ that is isomorphic to some $\M_n$ for $n<\omega$. Note that we may have two structures $\N$ and $\N'$ that are each isomorphic to some $\M_n$ (hence both in $\cH$) but with the objects of $\N'$ having infinitely many classes with respect to a parameter from $\N$. Thus $\N\cup\N'$ cannot be contained in any element of $\cH$, so $\cH$ is not a directed system.

To handle this, we'll show that $\cH$ is nice and therefore that we can reduce to the filtration of $\cH$ to the directed system given by the constructed chain of $\M_n$'s.

\begin{remark}\label{rem:trivfork}
    By Theorem 3.14 and the proof of Theorem 4.5 in \cite{kruckman2016infinitary}, $\Th(\M_n)$ has trivial forking and trivial $\acl$ for every $n<\omega$. It follows that the same is true for every element of $\cH$. In particular, this implies that the theory of each $\M_n$ is simple.
\end{remark}

\begin{lemma}\label{lem:feq-nice}
    $\cH$ is nice.
\end{lemma}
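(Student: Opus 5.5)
We need to show three things: that nice filtrations of $\cH$ exist, that the class of nice filtrations is closed under automorphisms of $\M$, and that $\cH$ is filtration invariant. We begin with existence. The candidate is the constructed chain $\cH_0=\{\M_n:n<\omega\}$. It is directed because it is a chain, and covering holds because $\M=\bigcup_n\M_n$. Convergence of truth value comes from viewing $T^*_{\feq}$ as the limit of the filtration $(K_n)$ in \cite{kruckman2019disjoint}: each $\M_n\restriction L_{\feq}$ is the \Fraisse limit of $K_n$, and a sentence (with parameters) true in $\M$ has a quantifier-free-definable witness configuration that eventually lives in $K_n$. By quantifier elimination for $T^*_{\feq}$ and for each $T_n$, with the extension axioms reducing to one-point extensions that are realized once $n$ exceeds the number of classes involved, the formula is true in $\M_n$ for all large $n$.

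Homogeneity (condition 7) follows from quantifier elimination in both $T^*_{\feq}$ and $T_n$. If $a\equiv^\M_A b$, then $a$ and $b$ have the same quantifier-free type over $A$, hence over $A\cap N$. Since $\N\cong\M_n$ is a substructure and $T_n$ has quantifier elimination, this gives $a\equiv^\N_{A\cap N}b$.

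Stabilization of $\acl$ (condition 6) is immediate from Remark~\ref{rem:trivfork}: $\acl$ is trivial in each $T_n$, so $\acl^\N(A\cap N)=A\cap N$ for every $\N$.

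For stabilization of non-forking (condition 4) and strong finite character (condition 5), we use that forking in $T_n$ is trivial. This means $a\ind^{\N}_C B$ holds iff $a\cap BC\subseteq C$, with no dependence on $n$, and so it does not depend on $\N$ at all once $a,B,C$ lie in it. Hence whether a finite partial type $\pi(x,b)$ forks over $C\cap N'$ is decided by whether it implies $x_i=e$ for some $e\in b\setminus C$. Since equality is absolute, this is stable in $n$. When forking occurs, the witness $x=b$ serves as the formula required by strong finite character, and it is true in $\M$.

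The same computation proves filtration invariance. For any nice filtration $\cH_1$ and all sufficiently large $\N'\in\cH_1$, the relation $A\ind^{\lim}_CB$ in $\cH_1$ reduces to $A\cap \bar B\subseteq\bar C$. This is because $\bar X=X$ by triviality of $\acl$ in $\M$, and the trivial-forking description in $\N'$ depends only on which elements coincide. The criterion $A\cap B\subseteq C$ makes no reference to $\cH_1$, so any two nice filtrations yield the same relation.

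Closure under automorphisms is then formal. If $\cH_1$ is a nice filtration and $\sigma\in\mathrm{Aut}(\M)$, then $\sigma(\cH_1)\subseteq\cH$ because $\cH$ is automorphism invariant by definition, being all copies of the $\M_n$. Moreover $\sigma$ transports directedness, covering, and each of conditions 3–7, since those conditions are stated in terms of $\M$-truth, $\acl^\M$ and types in $\M$, all of which are $\sigma$-invariant.

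The main obstacle is convergence of truth value for the chain. Quantifier elimination only gives it for quantifier-free formulas directly, so the key step is showing that each existential extension axiom of $T^*_{\feq}$, once its parameters lie in $\M_n$, is eventually witnessed inside $\M_m$. The witness must be chosen inside the predicates $Q_m,R_m$. Here we plan to use genericity of the \Fraisse limit $\M^Q$: we realize the required one-point extension as an $L_Q$-structure in which the new point lies in $Q_m$ (resp. $R_m$) for $m$ large. This is possible because each $R_n$-parameter so far uses at most $n$ classes and we may choose $m$ larger than any needed new class. Once the existential axioms are witnessed this way, an induction on formula complexity completes the argument.
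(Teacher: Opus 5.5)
Your proof is correct, and its core matches the paper's. Trivial forking and trivial $\acl$ make $A\ind^{\lim}_CB$ equivalent to $A\cap B\subseteq C$ in every nice filtration; the paper runs the same observation as a proof by contradiction. And $\sigma$ carries nice filtrations to nice filtrations.

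The differences are mostly bookkeeping:
\begin{itemize}
\item You prove nonemptiness inside the lemma. The paper's proof of Lemma~\ref{lem:feq-nice} omits it and supplies it later, in Theorem~\ref{thm:feq-approx} and Lemma~\ref{lem:feq-ctv}. There, $K_n$-richness of $\M_n$ together with the bound $\abs{\aaa\bb}\le n$ on the number of classes does the job of your one-point extensions inside $Q_m,R_m$. Keep your bound on $m$ depending only on the size of the configuration: that uniformity is what the negation step of your induction on formulas needs.
\item You transport stabilization of $\acl$ formally, rather than invoking trivial $\acl$ as the paper does. This is valid, because the condition quantifies over all $\acl$-closed $A$ and $\sigma$ permutes them. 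So Remark~\ref{rem:nice-acl} is more cautious than necessary.
\end{itemize}

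One justification should be replaced. In condition~4, the relevant statement is whether $\pi(x,b)$ implies $\bigvee_{i,\,e\in b\setminus C}x_i=e$ in $\N'$; note this is a disjunction, not a single equation. Its truth is not stable just because equality is absolute. For example, if $b_1,\dots,b_n$ represent all $E_p$-classes of $\M_n$, then $\bigwedge_{i\le n}\neg E_p(x,b_i)$ is inconsistent in $\M_n$, so it forks there. Yet it has infinitely many realizations outside $b$ in $\M_{n+1}$. Stability instead follows from convergence of truth value applied to $\forall x\,(\pi(x,b)\to\bigvee_{i,e}x_i=e)$, which your existence argument already provides.
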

\begin{proof}
    Suppose $\cH_1,\cH_2\subseteq\cH$ are nice filtrations and let $A\ind^{\lim}_CB$ in the context of $\cH_1$. Suppose $A\nind^{\lim}_CB$ in the context of $\cH_2$. Then there is some $a\in A$ such that for every $\N_a\in\cH_2$ there is an $\N\supseteq\N_a$ in $\cH_2$ such that $a\nind^{\N}_{\bar{C}\cap N}\bar{B}\cap N$. However, since forking is trivial in the elements of $\cH$, it follows that $a\in B\cap N$. 

    Let $\N_a\in\cH_1$ be as in the definition of limit independence. Since $a\in\N_a$, it follows that $a\in B\cap N_a$ and therefore $a\nind^{\N_a}_{\bar{C}\cap N_a}\bar{B}\cap N_a$, a contradiction. So $\cH$ is filtration invariant.

    Now suppose $\cH_1$ is a nice filtration of $\cH$ and $\sigma$ an automorphism of $\M$. Let $\cH_2=\{\sigma(\N):\N\in\cH_1\}\subseteq\cH$. We claim $\cH_2$ is a nice filtration of $\cH$.

    That $\cH_2$ is directed and satisfies covering follows immediately from the definition. Convergence of truth value and homogeneity follow from preservation of truth value under automorphism. Automorphism invariance of $\ind^{\N}$ for any $\N\in\cH$ gives stabilization of non-forking. Together these imply strong finite character.

    Stabilization of algebraic closure follows from trivial acl for every $\N\in\cH$.
\end{proof}
\begin{remark}\label{rem:nice-acl}
    Note that for every condition other than stabilization of algebraic closure, the proof that the condition is preserved under automorphisms does not depend on any specifics of the theory. Stabilization of acl will not necessarily be maintained in general, as the automorphism taking elements of $\cH_1$ to elements of $\cH_2$ is not guaranteed to fix every set $A$ pointwise.
\end{remark}

Now let $\cH'\subset\cH$ be the collection of substructures $\M_n$. We will show that $\cH'$ is a nice filtration of $\cH$, and therefore that we can approximate $\M$ via $\cH'$. The argument largely follows the same ideas presented in \cite[Section 4.2]{kruckman2019disjoint}.

\begin{lemma}\label{lem:feq-ctv}
    (Convergence of truth value). For any $\aaa\in M$ and formula $\varphi(\xx)\in\lang_{\feq}$, if $\M\models\varphi(\aaa)$ then $\varphi(\aaa)$ is eventually true in $\cH'$.
\end{lemma}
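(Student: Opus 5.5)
We will argue by induction on formulas after first reducing to a convenient normal form. $T^*_{\feq}$ eliminates quantifiers: it is the theory of the \Fraisse limit of $K_{\feq}$, and $\M \models T^*_{\feq}$. Likewise each $T_n$ eliminates quantifiers, being the theory of the \Fraisse limit of $K_n$. So it suffices to treat formulas $\varphi(\xx)$ that are quantifier-free in $L_{\feq}$.

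For quantifier-free $\varphi$ the claim is immediate. Pick $n$ with $\aaa \subseteq \M_n$, which is possible since $\M=\bigcup_n \M_n$. Each $\M_{n'}$ with $n'\ge n$ is an $L_{\feq}$-substructure of $\M$, so quantifier-free formulas are absolute between $\M_{n'}$ and $\M$. Hence $\M_{n'}\models\varphi(\aaa)$ for all $n'\ge n$.

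Quantifier elimination in $\M_{n'}$ is taken relative to $T_{n'}$, not $T^*_{\feq}$, so we must check that the reduction is legitimate. Given an arbitrary $\varphi$, let $\theta$ be its quantifier-free equivalent modulo $T^*_{\feq}$. We then need $\M_{n'}\models\varphi(\aaa)\leftrightarrow\theta(\aaa)$ for all large $n'$. This does not follow formally, and it is the main obstacle. To handle it, we prove the stronger statement directly by induction on the complexity of $\varphi$: for each $\varphi$, $\M \models \varphi(\aaa)$ if and only if $\M_{n'}\models\varphi(\aaa)$ for all sufficiently large $n'$.

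Boolean combinations are trivial, since finitely many "eventually" conditions can be combined along the chain. Consider then $\exists y\,\psi(\xx,y)$.

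Suppose first that $\M\models\exists y\,\psi(\aaa,y)$. Take a witness $b$ and apply the inductive hypothesis to $\psi(\aaa,b)$; this shows $\exists y\,\psi(\aaa,y)$ is eventually true.

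For the converse, suppose $\M_{n'}\models\psi(\aaa,b_{n'})$ for cofinally many $n'$. We must produce a witness in $\M$. Since everything is quantifier-free up to the induction, this comes down to the quantifier-free type of $b_{n'}$ over $\aaa$: that type is one of finitely many, so some single qf-type $q(\aaa,y)$ recurs cofinally. The extension axioms of $T^*_{\feq}$ (the genericity axioms, i.e.\ the \Fraisse extension property of $K_{\feq}$) show that every qf-type over $\aaa$ that is realizable in a $K_{\feq}$ structure is realized in $\M$. Here we use that $K_n\subseteq K_{\feq}$, so any configuration realized in some $\M_{n'}$ lies in $K_{\feq}$.

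We then need that realizations of $q$ satisfy $\psi$ in $\M$ exactly as they do in large $\M_{n'}$. Since $\psi$ need not be quantifier-free, we instead work with complete qf-types of bounded size and use the extension properties in both directions, as in \cite[Section 4.2]{kruckman2019disjoint}. The decisive point is this: once $n'$ exceeds the number of elements involved, every finite configuration realizable in $K_{\feq}$ on those elements is also realizable in $K_{n'}$, since fewer than $n'$ points occupy fewer than $n'$ classes per parameter. Consequently the extension axioms of $T_{n'}$ and of $T^*_{\feq}$ agree on formulas of bounded size.

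Granting this, the back-and-forth systems of finite partial isomorphisms between $\M_{n'}$ and $\M$, of depth bounded by the quantifier rank of $\varphi$, show that $(\M_{n'},\aaa)\equiv_k(\M,\aaa)$ once $n' > |\aaa|+k$. This gives convergence of truth value for $\varphi$. Verifying the size bound in the extension step is where the real work lies.
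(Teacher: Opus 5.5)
Your final argument is correct, and it takes a different route from the paper's. The paper stays inside a syntactic induction. It proves that each $\varphi$ has ``uniform quantifier elimination for large $n$'': the quantifier-free $\psi$ with $\M\models\forall\xx(\varphi\leftrightarrow\psi)$ also satisfies $\M_n\models\forall\xx(\varphi\leftrightarrow\psi)$ for all $n\ge N_\varphi$.

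In its existential step the two directions are handled differently:
\begin{itemize}
\item The implication $\M_n\models\exists\yy\,\varphi'(\aaa,\yy)\Rightarrow\M\models\exists\yy\,\varphi'(\aaa,\yy)$ is free, by upward persistence of existential formulas once $\varphi'$ has been replaced by its quantifier-free equivalent.
\item The converse takes a witness $\bb$ in $\M$. It then uses the extension property of $\M_n$ together with the count $|\aaa\bb|\le n$ to copy $\bb$ into $\M_n$ over $\aaa$.
\end{itemize}

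You instead play a depth-$k$ back-and-forth (Ehrenfeucht--Fra\"iss\'e game) between $(\M_n,\aaa)$ and $(\M,\aaa)$. Your forth step uses the $K_{\feq}$-extension property of $\M$ and $K_n\subseteq K_{\feq}$. Your back step uses the $K_n$-extension property of $\M_n$ and the same counting fact as the paper: $m\le n$ points carry at most $m$ classes per parameter, so they lie in $K_n$.

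The ingredients are the same; the paper uses quantifier elimination in $\M$ where you use its extension property, which amounts to the same thing for a Fra\"iss\'e limit. Your version makes the uniformity explicit: the threshold $|\aaa|+k$ depends only on the tuple length and the quantifier rank, never on particular witnesses. It also gives $\M_n\equiv_k\M$ outright. The paper's uniform quantifier elimination then follows by applying this to the sentence $\forall\xx(\varphi\leftrightarrow\psi)$.

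When you write this up, drop the first two attempts.
\begin{itemize}
\item The opening reduction to quantifier-free formulas is invalid, as you yourself note.
\item The induction with the non-uniform hypothesis ``$\varphi(\aaa)$ holds in $\M$ iff in all large $\M_n$'' cannot close the converse existential step. The witnesses $b_n$ change with $n$, and the threshold the hypothesis gives for $\aaa b_n$ may exceed $n$.
\end{itemize}
The back-and-forth is exactly the uniform strengthening that repairs this.

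The size bound you describe as ``where the real work lies'' is already settled by your counting remark, so nothing needs to be granted. You should, however, say why $\M$ has the $K_{\feq}$-extension property. Either note that $\M\models T^*_{\feq}$, whose extension axioms are first order, or argue directly: given finite $A\subseteq B$ in $K_{\feq}$ with $A\subseteq\M_m$, embed $B$ over $A$ into $\M_{m'}$ for any $m'\ge\max(m,|B|)$.
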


\begin{proof}
    We say that a formula $\varphi$ has uniform quantifier elimination for large enough $n$ if there is some fixed $N<\omega$ such that if $\psi$ is the quantifier free formula given by quantifier elimination in $\M$ such that $\M\models(\forall\xx)[\varphi(\xx)\leftrightarrow\psi(\xx)]$, then $\M_n\models(\forall\xx)[\varphi(\xx)\leftrightarrow\psi(\xx)]$ for all $n\geq N$. To prove convergence of truth value, we will first show by induction on formulas that every formula in $\lang_{\feq}$ has uniform quantifier elimination for large enough $n$.

    If $\varphi$ is quantifier-free the conclusion is immediate. Suppose $\varphi'$ is a (not necessarily quantifier free) formula satisfying uniform quantifier elimination for large enough $n$.

    Let $\varphi(\xx)=\exists\yy\varphi'(\xx,\yy)$. By the inductive hypothesis, for $n\geq N$ we can replace $\varphi'$ by its quantifier-free equivalent and consider $\varphi$ as a formula with a single existential quantifier. Consider an arbitrary $\aaa$. If $\M_n\models\varphi(\aaa)$, then since $\varphi$ is existential it follows $\M\models\varphi(\aaa)$. By quantifier elimination in $\M$, $\M\models\psi(\aaa)$, and since $\psi$ is quantifier-free it follows that $\M_n\models\psi(\aaa)$ in any $\M_n$ containing $\aaa$, so the forwards implication holds. Now suppose $\M_n\models\psi(\aaa)$. It follows that $\M\models\psi(\aaa)$ and by quantifier elimination that $\M\models\varphi(\aaa)$. We claim that $\M_n\models\varphi(\aaa)$ for large enough $n$.

    Since $\M\models\varphi(\aaa)$ there is some $\bb\in\M$ such that $\M\models\varphi'(\aaa,\bb)$; however $\bb$ is not necessarily contained in $\M_n$. Let $n\geq\max(\abs{\aaa\bb},N)$. Since $\aaa\in\M_n$ we have $\vect{\aaa}\in K_n$, so to show $\vect{\aaa\bb}\in K_n$, we need only show that the elements of $\bb$ cannot increase the number of equivalence classes in any parameter to be greater than $n$. 

    A parameter in $\bb$ can partition the set into at most $\abs{\aaa\bb}-1$ classes, in the case where all other elements are objects and the parameter places each object into a distinct class. An object in $\bb$ can increase the number of classes represented for a given parameter by at most 1, so collectively if $\bb$ contains only objects the number of new classes not represented by elements of $\aaa$ is at most $\abs{\bb}$. Since $\vect{\aaa}$ can represent at most $\abs{\aaa}$ classes (in the case where each element of $\aaa$ is an object of distinct class) the total number of classes in $\abs{\aaa\bb}$ with respect to any parameter cannot exceed $\abs{a}+\abs{b}=\abs{\aaa\bb}\leq n$. Therefore $\vect{\aaa\bb}$ represents $\leq n$ classes with respect to each parameter and $\vect{\aaa\bb}\in K_n$, so $\vect{\aaa\bb}$ embeds into $\M_n$ over $\vect{\aaa}$.

    Now, using this embedding we can find a $\bb'\in\M_n$ such that $\M_n\models\varphi'(\aaa,\bb')$. Then $\bb'$ is a witness to the existential formula $\varphi(\aaa)$, so $\M_n\models\varphi(\aaa)$ and the backwards implication holds.

    The universal case is simply the negation of the existential case and follows the same argument to show that the bi-implication holds.

    Therefore every formula has uniform quantifier elimination for large enough $n$.

    Let $\aaa\in\M$ and $\varphi(\xx)\in\lang_{\feq}$ such that $\M\models\varphi(\aaa)$. By quantifier elimination in $\M$ we can replace $\varphi$ by an equivalent quantifier-free formula $\psi$. Then since $\psi$ is quantifier-free, $\M_n\models\psi(\aaa)$ for any $\M_n$ containing $\aaa$. Since $\varphi$ has uniform quantifier elimination for large enough $n$, it follows that for large enough $n$, $\M_n\models\varphi(\aaa)$ and $\varphi(\aaa)$ is eventually true.
\end{proof}

\begin{theorem}\label{thm:feq-approx}
    $\cH'$ is a nice filtration of $\cH$ and thus approximates $\M$. 
\end{theorem}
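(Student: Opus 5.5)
We need to check that $\cH'=\{\M_n:n<\omega\}$ is a directed system satisfying conditions 1 and 3--7 of Definition \ref{def:approximation}. Lemma \ref{lem:feq-nice} shows that $\cH$ is nice. Theorem \ref{thm:nice-filtration} then lets us approximate $\M$ by $\cH'$ with all the properties of $\ind^{\lim}$ intact. I will go through the conditions in turn, using trivial forking and trivial $\acl$ from Remark \ref{rem:trivfork} wherever possible.

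\emph{Directedness and covering} are immediate, since $\cH'$ is a chain $\M_0\subseteq\M_1\subseteq\cdots$ and $\M=\bigcup_n\M_n$ by definition. \emph{Convergence of truth value} is Lemma \ref{lem:feq-ctv}.

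\emph{Stabilization of algebraic closure.} $\acl$ is trivial in $\M$, which is a model of $T^*_{\feq}$. It is also trivial in each $\M_n$, by Remark \ref{rem:trivfork}. So any $A$ with $\bar A=A$ gives $A\cap M_n=\acl^{\M_n}(A\cap M_n)$ for every $n$, and we may take $\N_{\alg}=\M_0$.

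\emph{Homogeneity.} Here I would use quantifier elimination in both $\M$ and the $\M_n$. The $\M_n$ are \Fraisse limits of $K_n$ in $L_{\feq}$, and Lemma \ref{lem:feq-ctv} shows that each $L_{\feq}$-formula is eventually equivalent to its $\M$-quantifier-free form. Given $a\equiv^\M_A b$ with $a,b\in M_n$, the quantifier-free types of $a$ and $b$ over $A\cap M_n$ agree. Ultrahomogeneity of $\M_n$ (it is a \Fraisse limit with QE) then gives $a\equiv^{\M_n}_{A\cap M_n}b$. There is a subtlety: QE in $\M_n$ holds for $T_n$ itself, not only eventually. I would cite the fact that $T_n$ is the theory of a \Fraisse limit of a class with SAP in a finite relational language, so it has QE.

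\emph{Stabilization of non-forking and strong finite character.} Trivial forking in each $\M_n$ reduces forking of a finite partial type $\pi(x,b)$ over $C\cap M_n$ to one condition: $\pi$ implies that some coordinate of $x$ equals an element of $b$ outside $C$, or $\pi$ implies a quantifier-free algebraic condition. By the uniform quantifier elimination for large $n$ from Lemma \ref{lem:feq-ctv}, this quantifier-free reduction is eventually the same for all $n\geq N$. Hence $\pi(x,b)$ either eventually forks or eventually does not fork. For strong finite character, suppose $A\nind^{\lim}_CB$. As in the proof of Lemma \ref{lem:feq-nice}, triviality forces some $a\in A$ to lie in $\bar B\setminus\bar C$ for cofinally many $n$, hence eventually, since $a$ is a fixed element. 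The formula $x=b$ with $b:=a\in\bar B$ then holds in $\M$ and eventually forks over $\bar C$.

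I expect the main obstacle to be stabilization of non-forking for arbitrary finite partial types. This requires making precise, uniformly in $n$, what forking means in $T_n$: that forking is governed by equalities, and that the $n$-class bound introduces no extra algebraicity once $n$ exceeds the size of the parameters. I would first try to verify that for $n\geq|xb|$ the type $\pi(x,b)$ forks in $\M_n$ over $C\cap M_n$ exactly when it does in $\M$ over $C$, mirroring the embedding argument in Lemma \ref{lem:feq-ctv}.
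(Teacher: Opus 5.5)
Your outline matches the paper's proof. Each condition comes from Lemma \ref{lem:feq-ctv} and from the trivial forking and trivial $\acl$ of Remark \ref{rem:trivfork}, and Lemma \ref{lem:feq-nice} together with Theorem \ref{thm:nice-filtration} then finishes. Your homogeneity argument differs from the paper's and is more direct. The paper appeals to automorphisms of $\M$ that extend finite partial isomorphisms of $\M_n$ and restrict to automorphisms of $\M_n$. You only use that $\M_n$ is an $L_{\feq}$-substructure of $\M$ and a model of $T_n$, which has quantifier elimination. Your strong finite character argument, via the formula $x=a_i$ for a coordinate $a_i\in\bar B\setminus\bar C$, is also fine.

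The verification you propose in your last paragraph is false, however, and should be dropped. Forking in $\M_n$ does not agree with forking in $\M$ for large $n$. Take a parameter $p$ and an object $b$. The formula $E_p(x,b)$ divides over $\{p\}$ in $T^*_{\feq}$: take an indiscernible sequence of objects in pairwise distinct $E_p$-classes, which exists because there are infinitely many classes. Yet $E_p(x,b)$ does not fork over $\{p\}$ in any $\M_n$, because $E_p(x,b)\wedge x\neq b$ is consistent and forking there is trivial. This is exactly the discrepancy the paper exploits in Section \ref{sec:dim}.

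You don't need any comparison with $\M$. Your preceding paragraph already contains the right argument once it is stated precisely. For $b\subseteq M_n$, trivial forking in $T_n$ says that $\pi(x,b)$ forks over $C\cap M_n$ in $\M_n$ if and only if
\[\M_n\models\neg\exists x\,\Bigl(\pi(x,b)\wedge\bigwedge_{i,\ j:\ b_j\notin C}x_i\neq b_j\Bigr).\]
Equivalently, $\pi(x,b)$ implies a \emph{disjunction} of equalities $x_i=b_j$ with $b_j\notin C$, not necessarily a single one. This is one sentence over $b$, the same for every $n$. Lemma \ref{lem:feq-ctv} makes its truth value eventually constant, which is exactly stabilization of non-forking.
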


\begin{proof}
\begin{enumerate}
    \item \textbf{Covering}: By construction.
    \item \textbf{Convergence of truth value}: This is Lemma \ref{lem:feq-ctv}.
    \item \textbf{Stabilization of non-forking}: Follows from trivial forking in the approximating theories (Remark \ref{rem:trivfork}).
    \item \textbf{Strong finite character of limit independence}: Follows from trivial forking in the approximating theories (Remark \ref{rem:trivfork}).
    \item \textbf{Stabilization of algebraic closure}: Follows from trivial $\acl$ of the approximating theories (Remark \ref{rem:trivfork}).
    \item \textbf{Homogeneity}: It follows from the construction of $\cH'$ that given $A,B$ finite subsets of $\N=\M_n$ and $f:A\cong B$ an isomorphism we can find $\sigma\in\aut(M)$ extending $f$ such that $\sigma\restriction M_n\in\aut(M_n)$.
\end{enumerate}

Note that $\cH$ clearly satisfies covering and automorphism invariance. By Lemma \ref{lem:feq-nice}, $\cH$ is nice, so by Theorem \ref{thm:nice-filtration} any nice filtration of $\cH$ will approximate $\M$ maintaining all the properties of $\ind^{\lim}$. In particular, since $\cH'$ is a nice filtration, $\cH'$ approximates $\M$. 
\end{proof}

\begin{remark}
    Theorem \ref{thm:feq-approx} does not imply that the theory $T^*_{\feq}$ is simple because we are working in a countable model and not the monster model so Corollary \ref{cor:simple} doesn't apply. Furthermore, $\ind^f$ does not satisfy the independence theorem over algebraically closed sets in the approximating theories, so we cannot conclude the independence theorem for $\ind^{\lim}$. 
\end{remark}

\subsection{$\omega$-free PAC fields} Another well-studied $\nsop_1$ theory is that of $\omega$-free PAC fields. We say that a field $K$ is \emph{pseudo-algebraically closed} (PAC) if every absolutely irreducible variety defined over $K$ has a $K$-rational point, and it is \emph{$\omega$-free} if there is an elementary substructure whose absolute Galois group is isomorphic to $\hat{F}_\omega$, the free profinite group on countably many generators. We say $K$ is \emph{$e$-free} if its absolute Galois group is isomorphic to $\hat{F}_e$, the free profinite group on $e$ generators.

Throughout this section we will use field theoretic knowledge and terminology from \cite{friedjarden}. We begin with some relevant definitions:

\begin{definition}

\begin{enumerate}
    \item A field extension $F/K$ is \textit{regular} if $F/K$ is separable and $K$ is algebraically closed in $F$; or equivalently, if $F$ is linearly disjoint from $\tilde{K}$ over $K$.
    \item If $K$ is a field and $K_0\subseteq K$ the prime subfield of $K$, then the \textit{field of absolute numbers of $K$} is given by $K_0^{\alg}\cap K$. 
    \item A field is called \textit{bounded} if for any integer $n$ it has only finitely many Galois extensions of degree $n$.
\end{enumerate}
\end{definition}

We will additionally assume that the field being approximated is perfect, and that the absolute numbers have finitely generated Galois group (in a topological sense). In other words, we assume the absolute numbers are a bounded field.

We will build our approximation of $K$ as a collection of countable $e$-free PAC fields that are each regular in $K$. To do so we first need the following lemmas.

\begin{lemma}
    (PAC Embedding Lemma, \cite[23.2.2]{friedjarden}) Let $E/L$ and $F/M$ be separable field extensions satisfying: $E$ is countable and $F$ is PAC and $\aleph_1$-saturated. In addition, suppose that there are an isomorphism $\Phi_0: L^{\sep}\to M^{\sep}$ with $\Phi_0(L)=M$ and a commutative diagram
    \begin{center}
        \begin{tikzpicture}
            \node (E) at (0,2) {$G(E)$};
            \node (F) at (2,2) {$G(F)$};
            \node (L) at (0,0) {$G(L)$};
            \node (M) at (2,0) {$G(M)$};
            \draw[->] (F) edge["$\varphi$"] (E);
            \draw[->] (M) edge["$\varphi_0$"] (L);
            \draw[->] (E) edge["$\res$"] (L);
            \draw[->] (F) edge["$\res$"] (M);
        \end{tikzpicture}
    \end{center}
    where $\varphi_0$ is the isomorphism induced by $\Phi_0$ and $\varphi$ is a homomorphism. Then, there exists an extension of $\Phi_0$ to an embedding $\Phi: E^{\sep}\to F^{\sep}$ which induces $\varphi$ with $F/\Phi(E)$ separable.
\end{lemma}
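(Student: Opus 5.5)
The plan is the standard proof of the PAC Embedding Lemma from \cite{friedjarden}. We first dualize the Galois-theoretic data into a field-theoretic problem. Then we use that $F$ is PAC to realize a regular extension inside $F$. Finally, $\aleph_1$-saturation lets us pass from finitely generated pieces to all of $E$.

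\textbf{Step 1: fix the algebraic part.} Since $\varphi$ is a homomorphism compatible with $\varphi_0$ via restriction, $\ker\varphi$ is a closed normal subgroup of $G(F)$. Its fixed field $F'\subseteq F^{\sep}$ is Galois over $F$. The homomorphism $\varphi$ induces an embedding $G(F'/F)\cong G(F)/\ker\varphi\hookrightarrow G(E)$, whose image is closed. Let $E'$ be the fixed field in $E^{\sep}$ of $\varphi(G(F))$.

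We then construct an $L$-embedding of $E'$ into $F'$ extending $\Phi_0$, compatible with $\varphi$. This is done by working with finite Galois subextensions of $E^{\sep}/E$ and using the commutative square to see that $\Phi_0$ is compatible on $L^{\sep}\cap E'$. A compactness (inverse limit) argument over the finite Galois extensions of $E$ then yields an isomorphism $\Phi':E^{\sep}\to$ (a separable closure of the image) which induces $\varphi$ on the part of $E^{\sep}$ where $\varphi$ is determined. This part is the routine but fiddly Galois bookkeeping.

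\textbf{Step 2: reduce to a regular extension.} After Step 1 we have $\Phi'(E')\subseteq F'$, and $E^{\sep}/E'$ is handled via the surjectivity of $\varphi$ onto $G(E')$. The remaining task is to embed $E$ over $E\cap(\text{algebraic part})$ into $F$ so that $F/\Phi(E)$ is separable. Concretely, let $E_0=E\cap \Phi'^{-1}(F)$ in the appropriate sense. Then $E/E_0$ is regular, since the algebraic part has been matched exactly by $\varphi$.

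\textbf{Step 3: use PAC and saturation.} Write $E$ as a countable union of finitely generated regular extensions $E_0(x_1,\dots,x_n)$. Each of these is the function field of an absolutely irreducible variety $V_n$ over $E_0$, and $E_0$ is identified with a subfield of $F$. Since $F$ is PAC and $F/\Phi(E_0)$ is regular, $V_n$ has $F$-rational points avoiding any given proper closed subset. Hence the type over the countable set $\Phi(E_0)$ saying ``$x$ is a generic point of each $V_n$'' is finitely satisfiable in $F$. By $\aleph_1$-saturation of $F$ it is realized, giving an $E_0$-embedding $E\to F$ extending $\Phi'$. Separability of $F/\Phi(E)$ is ensured by also requiring that $p$-independence be preserved, which is again a countable type, or is automatic if $F$ is perfect as in our applications.

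\textbf{Main obstacle.} The main obstacle is Step 1: showing that the induced map on Galois groups really comes from a field embedding extending $\Phi_0$ compatibly with $\varphi$. This needs careful use of the commutativity of the diagram together with the profinite inverse-limit argument.
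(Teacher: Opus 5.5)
The paper gives no proof of this lemma; it quotes it from Fried--Jarden, where it is proved by a field-crossing argument. Your proposal has a genuine gap: the requirement that $\Phi$ induce $\varphi$ is never secured.

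In Step~1 you claim to obtain, by Galois bookkeeping and an inverse limit, an embedding of $E'$ compatible with $\varphi$. But $E'$ (the fixed field of $\varphi(G(F))$) contains $E$, which is in general transcendental over $L$, and no manipulation of Galois groups assigns images to transcendental elements. So Steps~1--2 presuppose the embedding of $E$ that they are meant to help build.

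Step~3 then embeds $E$ by realizing an arbitrary generic point of the varieties $V_n$, with no reference to $\varphi$. The induced restriction map $G(F)\to G(\Phi(E))$ therefore need not be $\varphi$. Concretely, inducing $\varphi$ forces, for every $e\in E^{\sep}$, that $\Phi(e)\in F$ if and only if $e$ is fixed by $\varphi(G(F))$. For instance, in characteristic $\neq 2$, whether $\Phi(t)$ is or is not a square in $F$ (for $t\in E$) is dictated by $\varphi$. The ``is not'' conditions are universal statements about $F$. They cannot simply be added to your type and then obtained from existential closedness of $F$ in regular extensions. The Galois data has to be built into the object you embed.

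The missing idea is the field-crossing construction.
\begin{itemize}
\item Identify $L^{\sep}$ with $M^{\sep}$ via $\Phi_0$, and take $E^{\sep}$ and $F^{\sep}$ free over this common field; both are regular over it because $E/L$ and $F/M$ are separable.
\item For $\sigma\in G(F)$, let $\delta_\sigma$ act on the compositum $E^{\sep}F^{\sep}$ as $\varphi(\sigma)$ on $E^{\sep}$ and as $\sigma$ on $F^{\sep}$. This is well defined precisely because the square commutes.
\item The fixed field $D$ of $\{\delta_\sigma:\sigma\in G(F)\}$ contains $E$, is regular over $F$, and satisfies $E^{\sep}\subseteq DF^{\sep}$.
\item Since $F$ is PAC, it is existentially closed in $D$. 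Using countability of $E$ and $\aleph_1$-saturation, embed a countable $D_0\supseteq E$ into $F$ over a countable $F_0\supseteq M$, chosen with $D_0/F_0$ regular and $E^{\sep}\subseteq D_0F_0^{\sep}$.
\item Extend this embedding by the identity on $F_0^{\sep}$.
\end{itemize}
The resulting $\Phi$ automatically satisfies $\Phi\circ\varphi(\sigma)=\sigma\circ\Phi$ for all $\sigma\in G(F)$. This equivariance gives both $\Phi(E)\subseteq F$ and the Galois condition.

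Your separability remark is backwards. Separability of $F/\Phi(E)$ is automatic when $E$ is perfect, as in the paper's applications. If $F$ is perfect and $E$ is not, it is impossible. For example, take $L=M=\mathbb{F}_p$, $E=\mathbb{F}_p(t)$, and $F$ a nonprincipal ultraproduct of the fields $\mathbb{F}_{p^n}$. These satisfy the hypotheses as stated here; a compatible $\varphi$ exists because $G(F)\cong\hat{\mathbb{Z}}$. Yet $F$ is separable over no copy of $\mathbb{F}_p(t)$. So in positive characteristic a hypothesis relating the imperfection degrees of $E$ and $F$ is needed. Moreover, preserving $p$-independence is a universal condition, so it needs an argument beyond compactness plus PAC.
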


\begin{lemma}\label{lem:pacbase}
    Let $K$ be a perfect $\omega$-free $\aleph_1$-saturated PAC field with bounded absolute numbers. Then there exists an $e$-free PAC field $F$ with the same absolute numbers over which $K$ is regular.
\end{lemma}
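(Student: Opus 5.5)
Let $K_0=\mathbb{F}^{\alg}\cap K$ denote the absolute numbers of $K$, where $\mathbb{F}$ is the prime field, and let $e$ be the (finite) number of topological generators of $G(K_0)$, which exists because $K_0$ is bounded. The plan is to build $F$ as a countable regular extension $E$ of $K_0$ that is $e$-free and PAC, and then embed $E$ into $K$ using the PAC Embedding Lemma. The embedding must be arranged so that $K/\Phi(E)$ is regular and restriction $G(K)\to G(\Phi(E))$ is surjective.

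First we construct $E$. The standard Fried--Jarden existence results (e.g. the theory of $e$-free PAC fields over a given base: for a countable field $L$ and a closed subgroup/quotient structure, there is a regular extension that is PAC with prescribed Galois group) give the following. Since $G(K_0)$ is finitely generated by $e$ elements, fix an epimorphism $\hat F_e\to G(K_0)$. We want a countable PAC field $E$, regular over $K_0$, with $G(E)\cong\hat F_e$ such that restriction $G(E)\to G(K_0)$ is this epimorphism. This should follow from the result that for any countable Hilbertian regular extension $L$ of $K_0$ (say $L=K_0(t)$), for almost all $\sigma\in G(L)^e$ the field $L^{\sep}(\sigma)$ is PAC and $e$-free. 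We would then take a version with the absolute numbers controlled, which is the step needing care: we must ensure $\mathbb{F}^{\alg}\cap E=K_0$. I would obtain this by choosing $\sigma$ whose restrictions to $K_0^{\sep}$ generate $G(K_0)$. This is possible since restriction $G(L)\to G(K_0)$ is surjective, $L/K_0$ being regular, and the "almost all" set has positive Haar measure on each coset of the kernel.

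Next we embed. Apply the PAC Embedding Lemma with $E/L:=E/K_0$, $F/M:=K/K_0$, and $\Phi_0=\mathrm{id}$ on $K_0^{\sep}$; here $K$ is perfect, PAC and $\aleph_1$-saturated, and $E$ is countable. We need a homomorphism $\varphi:G(K)\to G(E)$ commuting with restrictions to $G(K_0)$. Because $G(E)\cong\hat F_e$ is projective and finitely generated, we can aim for $\varphi$ surjective. Since $K$ is $\omega$-free, every finite embedding problem for $G(K)$ is solvable; in particular, for the epimorphism $\hat F_e\to G(K_0)$ and the surjection $\res:G(K)\to G(K_0)$, a compactness/inverse-limit argument gives an epimorphism $\varphi:G(K)\to\hat F_e$ lifting $\res$. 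We use that $\hat F_e$ has finitely many open subgroups of each index, so the finite approximations form a compact inverse system. The lemma then yields $\Phi:E^{\sep}\to K^{\sep}$ inducing $\varphi$ with $K/\Phi(E)$ separable. Surjectivity of $\varphi$ means $\Phi(E)^{\sep}\cap K=\Phi(E)$, so $\Phi(E)$ is algebraically closed in $K$, and hence $K/\Phi(E)$ is regular. Set $F=\Phi(E)$.

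I expect the main obstacle to be producing the surjective lift $\varphi$ compatible with restriction to $G(K_0)$. This requires $\omega$-freeness to hold for $G(K)$ itself, which follows from $\aleph_1$-saturation together with the elementary characterization of $\omega$-free via embedding problems. A secondary difficulty is ensuring that the absolute numbers of $E$ are exactly $K_0$.
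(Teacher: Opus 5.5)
Your outline matches the paper's: take the absolute numbers $K_0$, build an $e$-free PAC field with absolute numbers $K_0$, apply the PAC Embedding Lemma to a compatible surjection $G(K)\to G(E)$, and read off regularity from surjectivity. The gaps are in the two places where you replace what the paper cites or asserts with arguments of your own.

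\textbf{Construction of $E$.} The paper takes $F$ from Proposition~38 of Cherlin--van den Dries--Macintyre. Your Haar-measure step does not control the absolute numbers. When $G(K_0)$ is infinite (e.g.\ $K_0=\mathbb F_p$), the kernel of $\res\colon G(L)^e\to G(K_0)^e$ has infinite index, so each of its cosets is Haar-null. Fubini only says that the conull set of good $\sigma$ has full measure in \emph{almost every} coset, and the cosets you need can form a null set. In fact the set of $\sigma$ whose restrictions generate $G(K_0)$ can itself be null. For $G(K_0)\cong\hat{\mathbb Z}$ and $e=1$ it is the preimage of $\hat{\mathbb Z}^\times$, which has measure $\prod_\ell(1-1/\ell)=0$. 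So ``positive measure on each coset'' is an unproved fibrewise strengthening of Jarden's theorem. What you actually need is a result giving a PAC extension of $K_0$ with prescribed projective absolute Galois group and prescribed restriction map to $G(K_0)$ (a Lubotzky--van den Dries type theorem); the paper's citation supplies this. There is a second problem in characteristic $p$. Your $E=L^{\sep}(\sigma)$ contains $t$ but not $t^{1/p}$, so it is imperfect. A perfect field is never separable over an imperfect subfield, so no embedding makes $K/\Phi(E)$ separable. Pass to the perfect hull of $E$: it is still PAC, has the same absolute Galois group, and has the same absolute numbers because $K_0$ is perfect.

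\textbf{The surjective lift.} The paper simply asserts the compatible surjection; your justification of it does not work. Finiteness of the number of open subgroups of $\hat F_e$ of each index bounds homomorphisms \emph{out of} $\hat F_e$. Here $\hat F_e$ is the target and $G(K)$ is not finitely generated, so the sets of compatible epimorphisms $G(K)\to\hat F_e/N$ are in general infinite and give no compact system. Nor do levels extend automatically. A compatible $\gamma_N$ lifts to $N'\subseteq N$ only if $(\gamma_N,\res)$ maps $G(K)$ onto $\hat F_e/N\times_{G(K_0)/\alpha(N)}G(K_0)/\alpha(N')$, and this can fail.
\begin{itemize}
\item Take $G(K_0)\cong\hat{\mathbb Z}$, $r\colon G(K)\to\hat{\mathbb Z}$ the restriction, and $\alpha\colon\hat F_2\to\hat{\mathbb Z}$ with $x_1\mapsto 1$, $x_2\mapsto 0$.
\item $(r \bmod 2, r\bmod 3)$ is a compatible epimorphism onto the level $\mathbb Z/2\times\mathbb Z/3$.
\item At the level $\mathbb Z/6\times\mathbb Z/3$, compatibility forces any extension to be $(r\bmod 6, r\bmod 3)$, whose image has order $6$, so it is not surjective.
\end{itemize}
So solutions must be chosen generically relative to $K\tilde K_0$, and that needs a real argument.

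A clean way around both this and the embedding lemma uses only facts the paper already invokes:
\begin{itemize}
\item Let $F$ be a countable perfect $e$-free PAC field with absolute numbers $K_0$.
\item Let $M$ be a perfect $\omega$-free PAC regular extension of $F$, which exists by the same kind of theorem.
\item By the Jarden--Kiehne criterion the paper uses later (same characteristic and same absolute numbers), $M\equiv K$.
\item By $\aleph_1$-saturation, $K$ realizes the type in $M$ of an enumeration of $F$.
\item The realization is a perfect copy of $F$. It is relatively algebraically closed in $K$, because the type says that polynomials over $F$ with no root in $F$ have no root. Hence $K$ is regular over it.
\end{itemize}
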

\begin{proof}
    Let $E:=\acl^K(\varnothing)$ have its absolute Galois group finitely generated by $e$ generators (topologically). Then by \cite[Proposition 38]{cherlin1980elementary} there exists an $e$-free PAC field $F$ with $\acl^F(\varnothing)=E$. There exists a surjection $\pi:G(K)\to G(F)$ and both $G(K)$ and $G(F)$ surject onto $G(E)$ via the restriction map. We have the following diagram:
    \begin{center}
        \begin{tikzpicture}
            \node (F) at (0,2) {$G(F)$};
            \node (K) at (2,2) {$G(K)$};
            \node (L) at (0,0) {$G(E)$};
            \node (M) at (2,0) {$G(E)$};
            \draw[->] (K) edge["$\pi$"] (F);
            \draw[->] (M) edge["id"] (L);
            \draw[->] (F) edge["res"] (L);
            \draw[->] (K) edge["res"] (M);
        \end{tikzpicture}
    \end{center}
    By the PAC embedding lemma we can find a copy $F'$ of $F$ inside $K$ with $\res_{K/F'}$ inducing $\pi$, so $K/F'$ is regular. 
\end{proof}

This gives the base case for our approximation. The next theorem shows how we can inductively build an $e+1$-free PAC field that is regular in $K$ from an $e$-free PAC field regular in $K$. To prove it we will need a few more definitions:

\begin{definition} 
    Let $G$ be a profinite group. 
    \begin{enumerate}
    \item We say $G$ is \textit{projective} if whenever $\alpha:G\to A$ and $\beta:B\to A$ are epimorphisms for $A,B$ finite groups, there is a homomorphism $\gamma:G\to B$ so that the following diagram commutes:
    \begin{center}
    \begin{tikzpicture}
        \node (g) at (0,1.5) {$G$};
        \node (a) at (0,0) {$A$};
        \node (b) at (2,0) {$B$};
        \draw[->>] (g) edge["$\alpha$"] (a);
        \draw[->>] (b) edge["$\beta$"] (a);
        \draw[->] (g) edge["$\gamma$"] (b);
    \end{tikzpicture}
    \end{center} 
    \item We say $G$ has the \textit{embedding property} if whenever $\alpha:G\to A$ and $\beta:B\to A$ are epimorphisms and $B\in\Image(G)$ there is an epimorphism $\gamma:G\to B$ so that the following diagram commutes.
    \begin{center}
    \begin{tikzpicture}
        \node (g) at (0,1.5) {$G$};
        \node (a) at (0,0) {$A$};
        \node (b) at (2,0) {$B$};
        \draw[->>] (g) edge["$\alpha$"] (a);
        \draw[->>] (b) edge["$\beta$"] (a);
        \draw[->>] (g) edge["$\gamma$"] (b);
    \end{tikzpicture}
    \end{center}
    \item If $G$ is both projective and has the embedding property, then $G$ is \textit{superprojective}.
    \end{enumerate}
\end{definition}

\begin{theorem}\label{thm:pacapprox}
    Let $K$ be a perfect $\omega$-free $\aleph_1$-saturated PAC field with bounded absolute numbers. Let $F$ be a countable $e$-free PAC field with $K/F$ regular. Then there exists an $e+1$-free PAC field $F'\supseteq F$ such that $K/F'$ is regular.
\end{theorem}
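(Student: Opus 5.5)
We plan to follow the template of Lemma~\ref{lem:pacbase}: first produce an abstract countable $e+1$-free PAC field $E\supseteq F$ with $E/F$ regular, then use the PAC Embedding Lemma to put a copy of $E$ inside $K$ over $F$ so that $K$ is regular over the copy.

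\textbf{Step 1: a surjection $G(K)\to\hat F_{e+1}$ lifting the restriction to $G(F)$.} Because $K/F$ is regular, the restriction map $\res:G(K)\to G(F)\cong\hat F_e$ is surjective. We want an epimorphism $\pi:G(K)\to\hat F_{e+1}$ and an epimorphism $\rho:\hat F_{e+1}\to\hat F_e$ with $\rho\circ\pi=\res$. One choice is to let $\rho$ send the last generator to $1$; this is the natural retraction of $\hat F_{e+1}$ onto $\hat F_e$. We construct $\pi$ on the first $e$ generators by lifting through $\res$, using projectivity of $\hat F_e$ to obtain a section $s:\hat F_e\to G(K)$ of $\res$. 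We then need an element $\sigma\in\ker(\res)$ such that $s(\hat F_e)$ and $\sigma$ together topologically generate a closed subgroup $H$ that is free on $e+1$ generators with $\res\restriction H=\rho$. Here we use that $G(K)$ is $\omega$-free and therefore superprojective. The embedding property, applied to finite quotients and combined with the fact that $\ker(\res)$ is large because $K$ is $\aleph_1$-saturated, lets us solve every finite embedding problem of the form $G(K)\to A$, $B\to A$ with $B$ a finite quotient of $\hat F_{e+1}$ compatible with $\rho$. A standard back-and-forth over the countably many finite quotients then produces $\pi$ directly as an epimorphism onto $\hat F_{e+1}$ compatible with $\rho$ and $\res$. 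This is also where we use that the absolute numbers are bounded, since it keeps the relevant inverse systems finitely generated.

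\textbf{Step 2: an abstract field realizing $\rho$.} We need a countable PAC field $E\supseteq F$ with $E/F$ separable (in fact regular), with $G(E)\cong\hat F_{e+1}$, and with $\res_{E^{\sep}/F^{\sep}}:G(E)\to G(F)$ corresponding to $\rho$. We take a function field $F(t)$, which is regular over $F$. Since $F$ is PAC, the embedding problem for $\rho$ is solvable over $F(t)$ in the sense needed. Applying the standard construction of PAC fields with prescribed projective absolute Galois group (as in \cite{friedjarden}, or the version of \cite[Proposition 38]{cherlin1980elementary} relative to a base field) gives an algebraic extension $E$ of $F(t)$ that is PAC, has $G(E)\cong\hat F_{e+1}$, and has restriction to $F$ equal to $\rho$. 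Because $\rho$ is surjective, $F$ is algebraically closed in $E$, so $E/F$ is regular.

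\textbf{Step 3: embedding into $K$.} We apply the PAC Embedding Lemma with $L=M=F$, $\Phi_0=\mathrm{id}$ on $F^{\sep}$, the given $E$, the field $K$ in place of $F$ (it is PAC and $\aleph_1$-saturated), and $\varphi=\pi:G(K)\to G(E)$. The required square commutes precisely because $\rho\circ\pi=\res$. This gives an embedding $\Phi:E^{\sep}\to K^{\sep}$ over $F$ inducing $\pi$ with $K/\Phi(E)$ separable. Since $\pi$ is surjective, $\Phi(E)$ is algebraically closed in $K$, so $K/\Phi(E)$ is regular. We set $F'=\Phi(E)$. It is $e+1$-free and PAC, and it contains $F$.

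The main obstacle is Step 1: building $\pi$ so that it is simultaneously surjective onto $\hat F_{e+1}$ and compatible with $\res$ onto $G(F)$. If this becomes delicate, we would first try the direct approach of picking $\sigma$ in $\ker(\res)$ generically using saturation of $K$, and only fall back to the back-and-forth over finite embedding problems if that does not work.
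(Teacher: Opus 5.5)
\textbf{The gap is in Step 1, which you correctly flag as the crux.} You need an epimorphism $\pi\colon G(K)\to\hat{F}_{e+1}$ with $\rho\circ\pi=\res$ \emph{exactly}, and $\res$ maps onto the infinite group $G(F)\cong\hat{F}_e$. This is not a finite embedding problem, so the embedding property of $G(K)$ does not give it directly.

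Solving finite problems level by level and passing to the limit can also get stuck. A legitimate solution at one level may already use part of the field cut out by $\ker(\res)$. Then the map from $G(K)$ to the fibre product of that finite quotient of $\hat{F}_{e+1}$ with the next finite quotient of $G(F)$ is not onto, and no surjective continuation exists.

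Here is a concrete case with $e=1$:
\begin{itemize}
\item Let $\hat{F}_2$ have basis $a,t$, with $\rho(a)$ a generator of $G(F)\cong\hat{\mathbb{Z}}$ and $\rho(t)$ trivial.
\item Let $\tau\colon\hat{F}_2\to\mathbb{Z}/2$ send $a\mapsto 0$ and $t\mapsto 1$.
\item Let $q=(\rho \bmod 4)+2\tau\colon\hat{F}_2\to\mathbb{Z}/4$, so $a\mapsto 1$ and $t\mapsto 2$. This quotient lies over $G(F)\to\mathbb{Z}/2$.
\end{itemize}
Then $\res \bmod 4$ is a surjective solution of this finite problem. But any $\pi$ with $\rho\circ\pi=\res$ and $q\circ\pi=\res\bmod 4$ satisfies $\tau\circ\pi=0$, so $\pi$ misses $t$.

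Your other idea, a section $s$ together with some $\sigma\in\ker(\res)$ generating a free subgroup $H$, yields a closed \emph{subgroup} of $G(K)$, not a quotient of it. That is the device the paper uses to build the field, not the map. Also, neither $\aleph_1$-saturation nor boundedness of the absolute numbers is the relevant input here. What matters is that $G(F)\cong\hat{F}_e$ is finitely generated.

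There are two ways to close the gap.
\begin{itemize}
\item \textbf{The paper's route.} Take a countable $K_0\preceq K$ containing $F$. Then $G(K_0)\cong\hat{F}_\omega$ by Iwasawa, $K_0/F$ is regular, and $K/K_0$ is regular, so $\res_{K/K_0}$ is onto. Superprojectivity of free profinite groups gives an epimorphism $\varphi\colon G(K_0)\to\hat{F}_{e+1}$ with $\rho\circ\varphi=\res_{K_0/F}$. Then $\pi:=\varphi\circ\res_{K/K_0}$ is the map you want.
\item \textbf{Repairing your direct construction.} Fix decreasing open normal subgroups $N_n$ of $\hat{F}_{e+1}$ with trivial intersection, and put $M_n=\rho(N_n)$. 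Require the level-$n$ solution $\gamma_n\colon G(K)\to\hat{F}_{e+1}/N_n$ to satisfy the invariant $\ker\gamma_n\cdot\ker(\res)=\res^{-1}(M_n)$. This invariant makes the next fibre-product map onto. It can be maintained because $\hat{F}_e$ has only finitely many open normal subgroups of bounded index. So the invariant is a condition on a finite quotient of $G(K)$, and it can be built into a slightly larger finite embedding problem.
\end{itemize}

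Once Step 1 is repaired, the rest goes through.
\begin{itemize}
\item Your Step 2, via the Lubotzky--van den Dries realization of $\hat{F}_{e+1}$ over $F$ compatibly with $\rho$, is heavier than the paper's construction. The paper takes $F'$ to be the fixed field in $K_0^{\alg}$ of the copy $\psi(\hat{F}_{e+1})\le G(K_0)$, split off from $\varphi$ by projectivity. This field is PAC because it is algebraic over $K_0$.
\item Your Step 3, with base $L=M=F$ and $\Phi_0=\mathrm{id}$, is the right way to apply the embedding lemma. The paper's final diagram has to be read the same way, with $G(K)\to G(K_0)\to\hat{F}_{e+1}\cong G(F')$ on top. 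Over the base $K_0$ the square cannot commute, since $\res_{K/K_0}$ is onto $G(K_0)\supsetneq G(F')$.
\end{itemize}
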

\begin{proof}
    Let $K_0\preceq K$ be a countable elementary submodel containing $F$ with $K_0/F$ regular.  
    We have the following commutative diagram:
    \begin{center}
    \begin{tikzpicture}
        \node (k) at (0,1.5) {$G(K_0)$};
        \node (f) at (0,0) {$G(F)$};
        \node (l) at (2,0) {$\hat{F}_{e+1}$};
        \draw[->>] (k) edge["res"] (f);
        \draw[->>] (l) edge["$\pi$"] (f);
        \draw[->>] (k) edge["$\varphi$"] (l);
    \end{tikzpicture}
    \end{center} where res is surjective due to regularity, $\pi$ can be taken as the surjection mapping the first $e$ generators of $\hat{F}_{e+1}$ to the generators of $G(F)$ and the last to the identity, and $\varphi$ is the surjection given from superprojectivity of free profinite groups.

    Given $\varphi$, we also have the following diagram:
    \begin{center}
    \begin{tikzpicture}
        \node (k) at (0,1.5) {$G(K_0)$};
        \node (f) at (0,0) {$\hat{F}_{e+1}$};
        \node (l) at (2,0) {$\hat{F}_{e+1}$};
        \draw[->>] (k) edge["$\varphi$"] (f);
        \draw[->>] (l) edge["id"] (f);
        \draw[->] (l) edge[right,"$\psi$"] (k);
    \end{tikzpicture}
    \end{center} where the existence of $\psi$ comes from projectivity of $\hat{F}_{e+1}$. Further, since the diagram commutes we have that $\psi$ is an injection, so the image $\psi(\hat{F}_{e+1})$ is a copy of $\hat{F_{e+1}}$ contained within $G(K_0)$. Let $F'$ denote the fixed field $(K_0^{\alg})^{\psi(\hat{F}_{e+1})}$. From the diagrams we have $\res\circ\psi$ is surjective since $\pi$ is surjective, so the restriction map applied to $G(F')=\psi(\hat{F}_{e+1})$ is surjective onto $G(F)$ and therefore $F'/F$ is regular. Since algebraic extensions of PAC fields are PAC, $F'$ is PAC \cite[Corollary 12.2.5]{friedjarden}.

    Finally we apply the PAC embedding lemma to the following diagram to get a copy of $F'$ inside of $K$:
    \begin{center}
        \begin{tikzpicture}
            \node (E) at (0,2) {$G(F')$};
            \node (F) at (2,2) {$G(K)$};
            \node (L) at (0,0) {$G(K_0)$};
            \node (M) at (2,0) {$G(K_0)$};
            \draw[->] (F) edge["$\varphi$"] (E);
            \draw[->] (M) edge["id"] (L);
            \draw[->] (E) edge["res"] (L);
            \draw[->] (F) edge["res"] (M);
        \end{tikzpicture}
    \end{center} 
    
\end{proof}

To build $\cH$, we start with the $e$-free PAC field constructed from $\acl(\varnothing)$ via Lemma \ref{lem:pacbase}. Then we apply Theorem \ref{thm:pacapprox} repeatedly to build a chain of $e$-free PAC fields,  with $e$ increasing, over each of which $K$ is regular. 

Because of the necessary assumptions on $K$ to construct this chain, in order to get the covering condition we will not be able to approximate every example of an $\omega$-free PAC field; however, for every theory $T$ of a perfect $\omega$-free PAC field with bounded absolute numbers we can find a $K\models T$ which is approximable. That is, we will not consider the chain necessarily as an approximation of the field $K$ itself, but of the field $K'=\bigcup\cH$. Since the theory of a perfect $\omega$-free PAC field is determined by fixing the characteristic and the absolute numbers \cite[Theorem 4.2]{jarden1976elementary}, $K'\models\Th(K)$. From now on we will write $K$ to mean this representative of its theory and the field being approximated.

Let $\cH$ be the collection of all substructures of $K$ that are isomorphic to some $F$ in this chain. We will show that $\cH$ is nice, but to do so we first need a bit more information.

\begin{fact}\label{fact:pac-fork}
    Forking in elements of $\cH$ is given by $a\ind^f_Cb\iff\acl(aC)\ind_{\acl(C)}^{\ACF}\acl(bC)$.
\end{fact}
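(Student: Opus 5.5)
The statement concerns elements of $\cH$, which are isomorphic to fields in the constructed chain. So I would reduce to a single perfect $e$-free PAC field $F$ with bounded absolute numbers. The plan is to invoke the known description of forking in bounded PAC fields (Chatzidakis), via two ingredients.

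\textbf{Step 1: simplicity and independence over algebraically closed sets.} A perfect $e$-free PAC field with $e$ finite is bounded, so its theory is simple. This was proved by Chatzidakis--Pillay (and Hrushovski) for bounded PAC fields. In that setting one has an explicit independence relation: $A\ind_C B$ iff $\acl(AC)$ and $\acl(BC)$ are algebraically independent over $\acl(C)$ in the sense of $\ACF$, i.e.\ $\acl(AC)\ind^{\ACF}_{\acl(C)}\acl(BC)$. Here $\acl$ in $F$ is the relative field-theoretic algebraic closure of the generated subfield, which is a standard fact for PAC fields. This uses perfectness, so the separable closure agrees with the algebraic closure and no $p$-independence issues arise.

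\textbf{Step 2: verify the Kim--Pillay axioms for this relation.} Invariance, monotonicity, finite character, symmetry and transitivity are inherited directly from algebraic independence in $\ACF$. Extension follows from the existence of free amalgams in $\ACF$. Local character holds because transcendence degree witnesses sit in a set of size $|T|$.

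The Independence Theorem over algebraically closed $E=\acl(E)$ is the main obstacle. Let $a\ind_E b$ and $a'\ind_E c$ with $a\equiv_E a'$ and $b\ind_E c$. Write $A=\acl(Ea)$ and so on. We must find a field $D$ containing the compositum of $\acl(Ab)$, $\acl(A'c)$ and $\acl(bc)$, glued along the algebraically independent configuration. We must also produce a compatible Galois-theoretic datum: a projection from $G(D)$ that restricts correctly to each of the three pieces. This reduces to a problem about embedding the three absolute Galois groups over $G(E)$. Because $F$ is bounded, types over algebraically closed sets are determined by the isomorphism type of the Galois structure, and $E$ is algebraically closed, so the amalgamation problem over $G(E)$ is solvable. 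This is exactly the argument in Chatzidakis' ``independence theorem in bounded PAC fields.''

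Once the amalgam is built, the embedding lemma for PAC fields (the PAC Embedding Lemma stated above), applied in an $\aleph_1$-saturated elementary extension, realizes it. Kim--Pillay then gives that this relation equals $\ind^f$. In practice I would cite Chatzidakis and Chatzidakis--Pillay for this step rather than reproving it.

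\textbf{Step 3: transfer to $\cH$.} Each element of $\cH$ is isomorphic to some $F$ in the chain. Forking and $\acl$ are isomorphism-invariant, so the characterization transfers to every element of $\cH$.
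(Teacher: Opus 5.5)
This is essentially the paper's argument. The paper only cites Hrushovski's work on bounded PAC fields together with the Kim--Pillay theorem (with details in Chatzidakis); your Steps 1--2 unpack that citation, and Step 3 plus the boundedness of $e$-free fields make explicit what the paper leaves implicit. The one thing you assert without justification is that the chain elements are perfect. This holds because each such $F$ is relatively algebraically closed in the perfect field $K$, so for $a\in F$ the element $a^{1/p}\in K$, being algebraic over $F$, already lies in $F$.
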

It follows from \cite{hrushovski1991pseudo} by the Kim-Pillay theorem that this relation is forking independence in a bounded PAC field. This is treated in detail in \cite{chatzidakis1998generic}.

\begin{lemma}\label{lem:aclpacsubsets}
    For any $X\subseteq K_n\in\cH$, $\acl_{K_n}(X)=\acl_K(X)$.
\end{lemma}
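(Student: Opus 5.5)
The plan is to use the standard description of model-theoretic algebraic closure in PAC fields. In a perfect PAC field $L$ (and more generally in bounded PAC fields, which the $e$-free ones are), $\acl_L(X)$ is the relative algebraic closure in $L$ of the subfield generated by $X$:
\[\acl_L(X)=L\cap \langle X\rangle^{\alg}.\]
This is classical; see Chatzidakis, or Cherlin--van den Dries--Macintyre, via the elementary equivalence theorem for PAC fields. It holds both for $K$, which is perfect, $\omega$-free and PAC, and for each $K_n\in\cH$, which is an $e$-free PAC field and perfect, being algebraic over the perfect absolute numbers extended inside perfect $K$.

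So the task reduces to a purely field-theoretic identity,
\[K_n\cap\langle X\rangle^{\alg}=K\cap\langle X\rangle^{\alg}.\]
The inclusion $\subseteq$ is immediate because $K_n\subseteq K$. For $\supseteq$, take $\alpha\in K$ algebraic over $\langle X\rangle\subseteq K_n$. Then $\alpha$ is algebraic over $K_n$. By construction $K/K_n$ is regular: this comes from Lemma \ref{lem:pacbase} and Theorem \ref{thm:pacapprox} for the chain, and it passes to isomorphic copies. Since the elements of $\cH$ are only specified up to isomorphism, I should check that any copy inside $K$ is still regular in $K$. If that fails, I would restrict attention to the chain itself, where regularity holds by construction. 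Regularity means $K_n$ is algebraically closed in $K$, so $\alpha\in K_n$, which gives the reverse inclusion.

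I expect the main obstacle to be justifying the description of $\acl$ in both $K$ and $K_n$ with the same formula. In particular, I need to rule out extra algebraic elements that are not field-algebraic, for instance elements coming from Galois-theoretic data. The standard argument would go as follows. Given $\alpha\notin L\cap\langle X\rangle^{\alg}$, I would use the embedding lemma (the PAC Embedding Lemma above, with saturation) to produce infinitely many conjugates of $\alpha$ over $X$. These are distinct realizations of $\operatorname{tp}(\alpha/X)$, obtained by varying a transcendental element while keeping the Galois data over $\langle X\rangle^{\alg}\cap L$ fixed. I would cite this rather than reprove it, noting that it requires only perfectness and PAC, and for the type description boundedness or the elementary equivalence theorem.

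A secondary issue is that $K_n$ is countable and not saturated, so the description of $\acl$ there should be taken from the general theorem rather than from the saturation-based argument. That theorem is valid in any model of the theory of $K_n$, because $\acl$ is computed inside an elementary extension and then intersected with $K_n$.
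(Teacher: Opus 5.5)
Your proposal is correct and follows the paper's proof. Both apply the description $\acl_L(X)=\langle X\rangle^{\alg}\cap L$ for perfect PAC fields (the paper cites Chatzidakis, Corollary 4.5) to $K_n$ and to $K$, and then use regularity of $K/K_n$, in the form $K\cap K_n^{\alg}=K_n$, to see that the two relative algebraic closures coincide. Two side remarks:

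\emph{Regularity.} Your caveat is well taken. The paper's proof simply invokes regularity of $K/K_n$, which is built in only for the constructed chain and its images under automorphisms of $K$, not for arbitrary isomorphic copies.

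\emph{Perfectness.} Perfectness of $K_n$ is best justified by $K_n$ being relatively algebraically closed in the perfect field $K$, not by any algebraicity over the absolute numbers.
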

\begin{proof}
    Since $K_n$ is a perfect PAC field, $\acl_{K_n}(X)=X^{\alg}\cap K_n$ \cite[Corollary 4.5]{chatzidakis1998generic}. From $K/K_n$ regular, we have that $K$ is linearly disjoint from $K_n^{\alg}$ over $K_n$; in particular, $K_n^{\alg}\cap K=K_n$. Replacing $K_n$, we then have $\acl_{K_n}=X^{\alg}\cap K_n^{\alg}\cap K$. Then since $X\subseteq K_n$ we know $X^{\alg}\subseteq K_n^{\alg}$, so $X^{\alg}\cap K_n^{\alg}=X^{\alg}$. Then since $K$ is PAC we have $\acl_{K_n}(X)=X^{\alg}\cap K=\acl_K(X)$.
\end{proof}
\begin{corollary}\label{cor:pacstabacl}
    (Stabilization of algebraic closure) If $\bar{A}=A$ and $\cH'\subseteq\cH$ is a filtration of $\cH$ to a directed system then there is $\N_{\alg}\in\cH'$ such that for all $\N\supseteq\N_{alg}$, $A\cap N=\acl^\N(A\cap N)$.
\end{corollary}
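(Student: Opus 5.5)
Since the paper has just shown in Lemma \ref{lem:aclpacsubsets} that algebraic closure in any $\N\in\cH$ agrees with algebraic closure in $K$ on subsets of $\N$, the plan is to derive stabilization directly from that lemma. Moreover, it should hold for \emph{every} element of the filtration, so we may take $\N_{\alg}$ to be any member of $\cH'$. The argument makes no use of the particular filtration beyond the fact that $\cH'\subseteq\cH$. That is the point of stating it for an arbitrary filtration: by Remark \ref{rem:nice-acl}, stabilization of acl is the one condition not automatically preserved under automorphisms, and the uniform statement handles this.

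First fix $A\subseteq K$ with $\bar A=A$, i.e.\ $\acl_K(A)=A$, and let $\N\in\cH'$ be arbitrary. Every element of $\cH$ is isomorphic to some $e$-free PAC field in the chain, and $K$ is regular over it. This regularity is what Lemma \ref{lem:aclpacsubsets} needs, and it should transfer along the isomorphism. One caveat: regularity of $K$ over an isomorphic copy is not literally automatic. If necessary, I would note that Lemma \ref{lem:aclpacsubsets} is stated for all $K_n\in\cH$, and use it as given.

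Next, set $X=A\cap N\subseteq N$. Lemma \ref{lem:aclpacsubsets} gives $\acl^\N(X)=\acl_K(X)$. By monotonicity of $\acl_K$, we have $\acl_K(X)\subseteq\acl_K(A)=A$. Also $\acl^\N(X)\subseteq N$ trivially. Hence $\acl^\N(A\cap N)\subseteq A\cap N$, and the reverse inclusion is trivial. This gives $A\cap N=\acl^\N(A\cap N)$ for every $\N\in\cH'$, so in particular for all $\N\supseteq\N_{\alg}$ with any choice of $\N_{\alg}\in\cH'$, which exists because a filtration is nonempty.

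The only real obstacle is the applicability of Lemma \ref{lem:aclpacsubsets} to an arbitrary member of $\cH'$. This rests on regularity of $K$ over each member of $\cH$ and on perfectness and PAC-ness of the members; these properties are inherited from the chain construction (Lemma \ref{lem:pacbase} and Theorem \ref{thm:pacapprox}). If regularity failed for some isomorphic copy, I would restrict attention to filtrations consisting of copies regular in $K$. For the chain filtration itself, regularity is guaranteed by construction.
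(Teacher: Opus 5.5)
Your argument is correct and is the same as the paper's: the paper obtains the corollary directly from Lemma \ref{lem:aclpacsubsets} via $\acl^{\N}(A\cap N)=\acl_K(A\cap N)\subseteq A\cap N$ for every $\N$ in any filtration, which is why it remarks that any filtration works. You can also drop your regularity caveat, because only the inclusion $\acl^{\N}(X)=X^{\alg}\cap N\subseteq X^{\alg}\cap K\subseteq\acl_K(X)$ is needed, and this uses only that $\N$ is a perfect PAC field, not that $K/\N$ is regular.
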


Note that stabilization of algebraic closure does not depend on the choice of filtration; in fact, any filtration will satisfy it.

\begin{lemma}\label{lem:pac-nice}
    $\cH$ is nice.
\end{lemma}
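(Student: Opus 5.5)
Following the proof of Lemma \ref{lem:feq-nice}, I would verify the two parts of niceness separately: filtration invariance, and that the class of nice filtrations is nonempty and closed under automorphisms of $K$. Nonemptiness comes from the constructed chain of $e$-free PAC fields, once it is shown to satisfy conditions 1 and 3--7 (this is essentially the later verification that the chain approximates $K$). So the real work lies in the other two parts.

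For filtration invariance, the key tool is Fact \ref{fact:pac-fork} together with Lemma \ref{lem:aclpacsubsets}. Let $\N\in\cH$ and let $a,B,C$ lie in $N$. Then $a\ind^{\N}_{\bar C\cap N}\bar B\cap N$ holds if and only if $\acl_{\N}(a,\bar C\cap N)\ind^{\ACF}_{\acl_{\N}(\bar C\cap N)}\acl_{\N}(\bar B\cap N,\bar C\cap N)$. By Lemma \ref{lem:aclpacsubsets}, each $\acl_\N$ here is $\acl_K$, so the condition becomes a statement about algebraic independence in $K^{\alg}$ of fields determined by $a$, $\bar B\cap N$ and $\bar C\cap N$. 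Algebraic independence has finite character and monotonicity. Hence I would show that, for a directed filtration $\cH_i$ whose union is $K$, eventual non-forking of $a$ in $\cH_i$ is equivalent to the intrinsic condition
\[
\acl_K(a\bar C)\ind^{\ACF}_{\bar C}\bar B,
\]
which does not depend on $\cH_i$.

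For one direction, a finite witness to algebraic dependence over $\bar C$ eventually lies inside every large enough $N$ by covering, which gives eventual forking. For the other direction, if the global independence holds, then monotonicity applied to the subfields $\bar B\cap N,\bar C\cap N$ should give independence in each $\N$. The main obstacle is here: the base changes from $\bar C$ to $\bar C\cap N$, and ACF-independence is not monotone in the base. I would try to handle this by using stabilization of algebraic closure (Corollary \ref{cor:pacstabacl}, valid for any filtration) together with a transcendence-degree count. Once $N$ contains $a$ and a transcendence basis of $\acl_K(a\bar C)$ over $\bar C$, independence over $\bar C\cap N$ reduces to a statement about finitely many elements, and this statement stabilizes. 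If this argument fails, I would fall back on stabilization of non-forking, which the nice filtrations already satisfy, and compare the two filtrations through a common witness tuple.

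For closure under automorphisms, Remark \ref{rem:nice-acl} says every condition except stabilization of algebraic closure transfers through $\sigma$ with no theory-specific input. So I only need stabilization of acl for $\sigma(\cH_1)$. This is exactly the content of the observation after Corollary \ref{cor:pacstabacl}: any directed filtration of $\cH$ satisfies it, because Lemma \ref{lem:aclpacsubsets} gives $A\cap N=\acl_K(A)\cap N\supseteq\acl_\N(A\cap N)$. This step is routine and completes the proof.
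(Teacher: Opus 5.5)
Your proposal is correct. The automorphism part is the paper's argument: Remark~\ref{rem:nice-acl}, together with the fact that every directed subfamily of $\cH$ stabilizes $\acl$ by Lemma~\ref{lem:aclpacsubsets}. Like the paper, you defer nonemptiness to the later verification that the chain approximates $K$.

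For filtration invariance, though, you take a genuinely different route. The paper transports a single forking witness: it takes a quantifier-free ring formula $\varphi$ witnessing forking in some $\N\in\cH_2$ and asserts that $\varphi$ still witnesses forking in a large member of $\cH_1$ containing $\N_1\cup\{b\}$. You instead show that eventual non-forking of a finite tuple $a$, computed in any directed subfamily of $\cH$ covering $K$, is equivalent to a condition that does not mention the filtration, namely $\mathrm{trdeg}(a/\bar C)=\mathrm{trdeg}(a/\bar B\bar C)$.

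The obstacle you flag goes away with a transcendence-degree sandwich, so you need neither stabilization of $\acl$ nor your fallback. By Fact~\ref{fact:pac-fork}, $a\ind^{\N}_{\bar C\cap N}\bar B\cap N$ holds if and only if $\mathrm{trdeg}(a/\bar C\cap N)=\mathrm{trdeg}(a/(\bar B\cap N)(\bar C\cap N))$. Both sides are non-increasing in $N$. They are bounded below by $\mathrm{trdeg}(a/\bar C)$ and $\mathrm{trdeg}(a/\bar B\bar C)$ respectively. Each bound is attained once $N$ contains $a$ and finitely many elements of $\bar C$ (respectively of $\bar B\cup\bar C$) over which $a$ already has that transcendence degree. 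So on a cone, the condition in $\N$ is exactly the global one. That finite set is the one you need, not ``a transcendence basis of $\acl_K(a\bar C)$ over $\bar C$'': such a basis can be chosen inside $a$, so containing it gives no information.

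Your route has three advantages. It uses only covering and directedness of the filtrations. It identifies $\ind^{\lim}$ outright as algebraic independence over $\bar C$. It also makes explicit the change of base from $\bar C\cap N$ to $\bar C\cap N'$. The paper's transport step leaves that change implicit, and a formula forking over a small $\bar C\cap N$ need not fork over a larger $\bar C\cap N'$. The paper's version, for its part, works with a single witnessing formula, in the spirit of strong finite character.
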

\begin{proof}
        Suppose $\cH_1,\cH_2\subseteq\cH$ are nice filtrations and let $A\ind^{\lim}_CB$ in the context of $\cH_1$. Suppose $A\nind^{\lim}_CB$ in the context of $\cH_2$. Then there is some $a\in A$ such that for every $\N_a\in\cH_2$ there is an $\N\supseteq\N_a$ in $\cH_2$ such that $a\nind_{\bar{C}\cap N}^{\N}\bar{B}\cap N$. Fix an arbitrary $\N_a$ and let $\varphi(\xx,\yy)$ be a formula witnessing forking in the corresponding $\N$. By Fact \ref{fact:pac-fork}, $\varphi$ is a quantifier-free formula in the language of rings defined over $C\cap N$ realized by $a,b$ for some $b\in B$, such that the dimension of $\varphi(\xx,\yy)$ is a definable property of the parameters $\yy$ \cite[Corollary 1.10]{marker2005model}.

    Let $\N_1\in\cH_1$ be the structure given by the definition of limit independence such that for all $\N\supseteq\N_1$ in $\cH$, $a\ind_{\bar{C}\cap N}^N\bar{B}\cap N$. Let $\N'\in\cH$ contain $\N_1\cup\{b\}$. Then $\varphi(\xx,\yy)$ witnesses forking in $\N$, which is a contradiction to choice of $\N_1$. 

    By Remark \ref{rem:nice-acl}, to show automorphisms of $K$ preserve nice filtrations we need only show that stabilization of $\acl$ is preserved. This is immediate from Corollary \ref{cor:pacstabacl}.    
\end{proof}

As in the previous example, we will fix a filtration of $\cH'$ to be the chain of $e$-free PAC fields constructed using Theorem \ref{thm:pacapprox}. We then have the following corollaries: 
\begin{corollary}\label{cor:pacsnf}
    (Stabilization of non-forking) For every formula or finite partial type $\pi(x,y)$, tuple $b$, and set $C$ we have one of the following
    \begin{itemize}
        \item $\pi(x,b)$ eventually forks over $C$ in $\cH'$
        \item $\pi(x,b)$ eventually does not fork over $C$ in $\cH'$
    \end{itemize} 
\end{corollary}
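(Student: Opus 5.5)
We reduce stabilization of non-forking to a monotonicity statement along the chain $\cH'$, using the explicit description of forking in Fact \ref{fact:pac-fork} together with Lemma \ref{lem:aclpacsubsets}. Fix $\pi(x,b)$ and $C$. Since $\cH'$ is a chain $F_0\subseteq F_1\subseteq\cdots$, it suffices to show the following. Once $\pi(x,b)$ forks over $C\cap F_n$ in $F_n$ (with $b\in F_n$), it forks over $C\cap F_m$ in $F_m$ for every $m\geq n$. If this monotonicity holds, then either some stage witnesses forking and all later stages do too (eventual forking), or no stage past the one containing $b$ witnesses forking (eventual non-forking). So the dichotomy reduces to this monotonicity.

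To obtain monotonicity, we translate forking into algebraic-geometric terms. By Fact \ref{fact:pac-fork}, in each $F_m$ a partial type $\pi(x,b)$ does not fork over $D_m:=\acl^{F_m}(C\cap F_m)$ exactly when it has a realization $a$ in an elementary extension with $\acl(aD_m)\ind^{\ACF}_{D_m}\acl(bD_m)$. Equivalently, the field-theoretic locus of some realization over $D_m(b)^{\alg}$ has the same transcendence degree as over $D_m^{\alg}$. By quantifier reduction in bounded PAC fields, together with the definability of dimension (as used in Lemma \ref{lem:pac-nice}, via \cite[Corollary 1.10]{marker2005model}), forking of $\pi(x,b)$ over $D_m$ is witnessed by a formula implying that $x$ lies in a variety $V_b$ defined over $D_m(b)$ of dimension strictly less than the dimension of every $D_m$-definable set containing it. Lemma \ref{lem:aclpacsubsets} gives $\acl^{F_m}(X)=\acl^K(X)$ for $X\subseteq F_m$. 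Hence $D_m=\acl^K(C\cap F_m)$, and these sets increase with $m$ toward $\bar C\cap\bigcup F_m$. We also use that each $F_m$ is regular in $F_{m+1}$ (both are regular in $K$), so algebraic closures in the smaller field are computed identically in the larger one.

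The key step is to show that a dimension drop witnessed at stage $n$ persists at stage $m$, even though the base has grown from $D_n$ to $D_m$. We would argue as follows. We choose $b$ and the witnessing $\ACF$-dependence at stage $n$. Passing to stage $m$ adds parameters from $C\cap F_m$. If $\pi(x,b)$ did not fork over $D_m$ in $F_m$, then transitivity and base monotonicity of $\ind^{\ACF}$ would let us find a realization that is $\ACF$-independent from $b$ over $D_m$. Moreover $D_m\ind^{\ACF}_{D_n}b$ fails only if $b$ is itself algebraically tied to the new part of $C$. In that case we instead take $n$ large enough that $\acl^K(Cb)\cap F_n$ already captures the relevant finite algebraic witnesses. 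This is possible since $b$ is finite and the dependence involves only finitely many elements of $\bar C$, each of which lies in some $F_k$. This is the main obstacle: choosing the stage $n$ so that the base is already saturated with respect to $b$, that is, $b\ind^{\ACF}_{D_n}D_m$ for all $m\geq n$. We expect to settle it via finite character of $\ind^{\ACF}$ and the finiteness of transcendence degree of $b$ over $\bar C$. Once this is arranged, forking and non-forking of $\pi(x,b)$ over $D_m$ in $F_m$ are determined by the $\ACF$-computation over $D_n$. That computation is independent of $m$ by regularity and Lemma \ref{lem:aclpacsubsets}, which yields the dichotomy.
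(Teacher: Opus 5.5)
The paper states this as an immediate corollary after Lemma \ref{lem:pac-nice} and gives no separate argument, so unpacking it through $\ACF$-independence is the natural route. As written, however, your proposal has two genuine gaps.

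\textbf{The opening monotonicity claim is false.} It also points the wrong way: in a fixed simple theory, enlarging the base can only turn forking into non-forking. Here is a counterexample.
\begin{itemize}
\item Take $b\in F_n$ transcendental and $c\in K\setminus F_n$. Then $c$ is transcendental over $F_n$, since $F_n$ is relatively algebraically closed in $K$. Put $C=\{c,b-c\}$.
\item Then $C\cap F_n=\varnothing$, so $x=b$ forks over $C\cap F_n$ in $F_n$.
\item Once $c\in F_m$, we have $b\in\mathrm{dcl}(C\cap F_m)$, and $x=b$ no longer forks.
\end{itemize}
Your third paragraph tacitly replaces monotonicity by first stabilizing the base. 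Choosing $n$ with $b\ind^{\ACF}_{D_n}\bar C$ is fine, since $\mathrm{trdeg}(b/D_m)$ is non-increasing. But the reduction in your first paragraph should then be dropped.

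\textbf{The final step does not hold.} You claim that once the base is stabilized, the forking status of $\pi(x,b)$ in $F_m$ is ``determined by the $\ACF$-computation over $D_n$'' and so is independent of $m$. Fact \ref{fact:pac-fork} and Lemma \ref{lem:aclpacsubsets} control $\acl$ and the independence relation. They do not control which $\ACF$-types over $D_mb$ are realized by $\pi(x,b)$ in elementary extensions of $F_m$. That depends on $\Th(F_m)$, and the $F_m$ are pairwise non-elementarily equivalent, since $F_m$ is $e_m$-free with $e_m$ strictly increasing.

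For a concrete counterexample, take $C=\varnothing$, $b\in F_n$ transcendental, and any $N\geq n$. Let $\sigma_N$ say there are more than $2^{e_N}-1$ separable quadratic extensions. Then $(x=b)\vee\sigma_N$ forks over $\varnothing$ in $F_m$ for $n\le m\le N$, and does not fork for $m>N$. Yet $b$ and $D_m=\acl(\varnothing)$ never change. So the stage at which the status stabilizes depends on $\pi$, not only on $b$ and $C$, and regularity cannot supply it.

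\textbf{What is missing.} You need convergence of truth value (Corollary \ref{cor:ctvfields}), which you never use, together with a uniformity argument.
\begin{itemize}
\item Consider the type $\pi(x,b)\cup\{\neg\theta(x,b,d)\}$, where $\theta$ ranges over quantifier-free formulas with $d\in\bar C$ such that $\theta(x,b,d)$ forks over $\bar C$ in $\ACF$.
\item \emph{Inconsistent case.} If this type is inconsistent with $\Th(K,\bar Cb)$, compactness gives a single sentence $\forall x\,(\pi(x,b)\to\bigvee_{i\le k}\theta_i(x,b,d))$ true in $K$. By convergence of truth value it holds in all large $F_m$. For such $m$ we also have $d\in D_m$, and each $\theta_i$ forks over $D_m$. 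This gives eventual forking.
\item \emph{Consistent case.} Here you must realize an infinitary condition in $F_m$ at a single stage. Convergence of truth value only gives each finite fragment, at a stage depending on that fragment.
\end{itemize}
Closing the consistent case needs genuinely new input. One possibility is a test-formula equivalent of $\pi$ valid in all large $F_m$, obtained from Lemma \ref{lem:qetest} plus convergence of truth value, combined with a PAC-embedding argument that realizes the required Galois data once $e_m$ is large. Your proposal does not address this direction.
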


\begin{corollary}\label{cor:pacsfc}
    (Strong finite character of $\nind^{\lim}$) If $A\nind^{\lim}_CB$, then there are $\varphi(x,y,z)$ without parameters, $a\in A$, $b\in\bar{B}$, and $c\in\bar{C}$ such that $\M\models\varphi(a,b,c)$ and $\varphi(x,b,c)$ eventually forks over $\bar{C}$ in $\cH'$.
\end{corollary}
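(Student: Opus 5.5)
We derive strong finite character from the explicit description of forking in Fact \ref{fact:pac-fork}, Lemma \ref{lem:aclpacsubsets}, and Corollary \ref{cor:pacsnf}. Suppose $A\nind^{\lim}_CB$ in $\cH'$. The first step is to fix the finite tuple $a\in A$ witnessing the failure. Since $\cH'$ is a chain, this means that cofinally many $K_n\in\cH'$ satisfy $a\nind^{K_n}_{\bar C\cap K_n}\bar B\cap K_n$. By Lemma \ref{lem:aclpacsubsets}, the algebraic closures computed in $K_n$ and in $K$ agree. So by Fact \ref{fact:pac-fork} this failure says that $\acl(a\,(\bar C\cap K_n))$ is not $\ACF$-independent from $\acl(\bar B\cap K_n)$ over $\acl(\bar C\cap K_n)$. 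The failure of $\ACF$-independence is witnessed by a transcendence degree drop. Concretely, there are finite tuples $b\in\bar B\cap K_n$, $c\in\bar C\cap K_n$, and a quantifier-free ring formula $\varphi(x,b,c)$ true of $a$, such that $\dim\varphi(x,b,c)<\operatorname{trdeg}(a/\acl(\bar C\cap K_n))$.

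The second step is to make the witness independent of $n$. The parameters $b,c$ found at level $n$ lie in $\bar B,\bar C$. Fix one such level $n_0$ and the corresponding $\varphi,b,c$. We claim that $\varphi(x,b,c)$ eventually forks over $\bar C$ in $\cH'$, i.e.\ it forks over $\bar C\cap K_m$ in $K_m$ for all $m\ge n_0$. The dimension of $\varphi(x,b,c)$ is a definable property of its parameters (as used in Lemma \ref{lem:pac-nice}), so it does not change with $m$. We still have to compare it with $\operatorname{trdeg}(a/\acl(\bar C\cap K_m))$, which can only decrease as $m$ grows. This is where we use Corollary \ref{cor:pacsnf}: the formula $\varphi(x,b,c)$ either eventually forks or eventually does not fork over $C$. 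We rule out the second option by contradiction. If it eventually does not fork, then the non-forking holds at arbitrarily large levels, including levels cofinal among those witnessing dependence of $a$. Combined with the forking-by-dimension description, this forces the dimension bound to hold at those levels, contradicting the choice at $n_0$. Here the chain structure of $\cH'$ and the finiteness of $a$ are used: $\operatorname{trdeg}(a/-)$ takes only finitely many values, so it stabilizes from some level on. Choosing $n_0$ beyond the stabilization point makes the dimension drop persist.

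Finally, $K\models\varphi(a,b,c)$ because $\varphi$ is quantifier-free and holds in $K_{n_0}\subseteq K$. Absorbing the parameters as variables gives a parameter-free $\varphi(x,y,z)$ as required.

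The main obstacle is the second step. Strong finite character requires that $\varphi(x,b,c)$ eventually forks over all of $\bar C$, with the base varying as $\bar C\cap K_m$, rather than merely over $\bar C\cap K_{n_0}$. The stabilization of $\operatorname{trdeg}(a/\acl(\bar C\cap K_m))$ is what makes this work. We would check carefully that the transcendence degree computed over $\acl(\bar C\cap K_m)$ converges to the one computed over $\bar C$. This uses Corollary \ref{cor:pacstabacl} together with the fact that $\bar C=\bigcup_m(\bar C\cap K_m)$ by covering.
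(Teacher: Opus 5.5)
You are following the route the paper has in mind. The paper records this corollary without a separate proof, relying on Fact \ref{fact:pac-fork} and the quantifier-free witness with ``definable dimension'' from Lemma \ref{lem:pac-nice}. However, the step your argument rests on is false.

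Write $D_m=\bar C\cap K_m$. Whether $\varphi(x,b,c)$ forks over $D_m$ in $K_m$ is a statement about \emph{every} realization of $\varphi(x,b,c)$ in a monster model of $\Th(K_m)$. By Fact \ref{fact:pac-fork}, it forks iff no realization $a'$ has $\operatorname{trdeg}(a'/D_m(b,c))=\operatorname{trdeg}(a'/D_m)$. The inequality $\dim\varphi(x,b,c)<\operatorname{trdeg}(a/D_m)$ concerns only the one realization $a$, and does not imply this.

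For a counterexample, take $C=\varnothing$, $B=\{b\}$, and $a=(a_1,ba_1)$ with $a_1,b$ algebraically independent over $\bar C$. Then $\operatorname{trdeg}(a/D_m)=2>1=\operatorname{trdeg}(a/\bar B\cap K_m)$ for all large $m$, so $a\nind^{\lim}_{\varnothing}b$. The formula $x_2=bx_1$ (the locus of $a$ over $b$) is quantifier-free, true of $a$, and of dimension $1<2$, which is exactly what your first step produces. Yet it forks over no $D_m$, because $(0,0)\in D_m$ realizes it.

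Your second step does not repair this. If $\varphi(x,b,c)$ does not fork over $D_m$, all you learn is that some realization $a'$ has $\operatorname{trdeg}(a'/D_m)\le\dim\varphi$, which contradicts nothing. Corollary \ref{cor:pacsnf} gives only the dichotomy, not which side holds. Stabilization of $\operatorname{trdeg}(a/D_m)$ controls $a$, not the other realizations, so it cannot handle the growing base.

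What is missing is a witness that excludes the realizations lying on subvarieties defined over the base, chosen once for all of $\bar C\supseteq D_m$. Your stabilization argument does give $s=\operatorname{trdeg}(a/\bar B\bar C)<t=\operatorname{trdeg}(a/\bar C)$, and that is the input needed. Proceed as follows.

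Let $V_0$ be the locus of $a$ over the field $F$ generated by $\bar B\cup\bar C$. For each absolutely irreducible component $V'$ of $V_0$, let $Z_{V'}$ be the intersection of all conjugates of $V'$ under automorphisms fixing $\bar C^{\alg}$. By Noetherianity this is a finite intersection, and it is the largest closed subset of $V'$ defined over $\bar C^{\alg}$. The union $Z$ of the $Z_{V'}$ is invariant under automorphisms fixing $F$. Hence $V_0\setminus Z$ is defined by a quantifier-free $\varphi(x,b,c)$ with $b\in\bar B$ and $c\in\bar C$.

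Since $s<t$, no component of $V_0$ is defined over $\bar C^{\alg}$. Hence $a\notin Z$ and $K\models\varphi(a,b,c)$.

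Now take any $m$ with $b,c\in K_m$, and suppose $a'$ realizes $\varphi(x,b,c)$ and is independent from $bc$ over $D_m$. Then the locus of $a'$ over $D_m^{\alg}\subseteq\bar C^{\alg}$ lies inside some component $V'$, hence inside $Z_{V'}$. This contradicts $a'\notin Z$. So $\varphi(x,b,c)$ forks over $D_m$ in $K_m$ for every such $m$, and no appeal to Corollary \ref{cor:pacsnf} is needed.
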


\begin{definition}\cite[Definition 2.1]{chatzidakis2023measures}
    Let $E$ be a field. We say a \textit{test sentence over $E$} is a Boolean combination of $L(E)$ sentences of the form $\exists y\;f(y)=0$ where $f(y)\in E[y]$ (for a single variable $y$). We say a \textit{test formula over $E$} is a Boolean combination of $L(E)$ formulas of the form $\exists y\;f(x,y)=0$ where $f(x,y)\in E[x,y]$ and $y$ is again a single variable.
\end{definition}

\begin{lemma}\label{lem:cor2.4}
    \cite[Corollary 2.4]{chatzidakis2023measures} Let $E$ be a subfield of an $e$-free PAC field $K$ and $a,b$ be tuples in $K$. The following are equivalent:
    \begin{enumerate}
        \item $\tp(a/E)=\tp(b/E)$
        \item There is an $E$-isomorphism $E(A)^{\sep}\cap K\to E(b)^{\sep}\cap K$ taking $a$ to $b$
        \item $a$ and $b$ satisfy the same test-formulas over $E$
        \item For every finite Galois extension $L$ of $E(a)$ there is a field embedding $\varphi:L\to E(b)^{\sep}$ such that $\varphi(L\cap K)=\varphi(L)\cap K$. 
    \end{enumerate}
\end{lemma}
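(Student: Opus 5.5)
The statement is Chatzidakis' characterization of types in an $e$-free PAC field. I would prove it as a cycle (1)$\Rightarrow$(3)$\Rightarrow$(4)$\Rightarrow$(2)$\Rightarrow$(1), using two standard facts. The first is that a test formula over $E$ is a formula in the language of rings with parameters from $E$. The second is the elementary equivalence and embedding theorem for PAC fields from \cite{friedjarden}: two PAC fields $K_1,K_2$ containing a common subfield $E$, with $K_i/E$ separable, satisfy $K_1\equiv_E K_2$ as soon as there is an $E$-isomorphism $E^{\sep}\cap K_1\to E^{\sep}\cap K_2$. Throughout I write $E(a)$ for the field generated (the ``$E(A)$'' in (2) should read $E(a)$), and I pass to perfect hulls where needed, since in our application $K$ is perfect.

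\textbf{(1)$\Rightarrow$(3).} This is immediate, since test formulas are formulas over $E$.

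\textbf{(3)$\Rightarrow$(4).} First, equality of the quantifier-free types already gives an $E$-isomorphism $E(a)\to E(b)$ with $a\mapsto b$; polynomial equations are test formulas with a dummy $y$.

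Let $L$ be a finite Galois extension of $E(a)$. By the primitive element theorem write $L=E(a)(\gamma)$, with minimal polynomial $g(x,Y)$ over $E(a)$. Clearing denominators, take $g$ to be a polynomial $f(a,Y)$ with coefficients in $E[a]$, together with the finitely many auxiliary polynomial conditions needed.

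The subfield $L\cap K$ is generated over $E(a)$ by finitely many elements. Each is a polynomial in $\gamma$, and each generates a subfield $E(a)(\delta)$ whose minimal polynomial $h(a,Y)$ has a root in $K$. Conversely, for every intermediate field not contained in $K$, its primitive element's minimal polynomial has no root in $K$.

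There are finitely many intermediate fields. Conjoining all of these statements gives a single test formula $\theta(x)$ true of $a$: ``$\exists y\, h_j(x,y)=0$'' for the subfields inside $K$, negated instances for the others, and the relations between $g$ and the $h_j$, which are quantifier-free. By (3), $\theta(b)$ holds. Reading off the corresponding roots over $E(b)$ then gives an embedding $\varphi\colon L\to E(b)^{\sep}$ with $\varphi(L\cap K)=\varphi(L)\cap K$.

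The delicate point is choosing $\theta$ so that it pins down exactly which subfields lie in $K$, and not just one root each. This is where I expect the main bookkeeping: one has to quantify over the finitely many intermediate fields and use Galois-ness of $L$, so that every root of $h_j$ generates a conjugate subfield.

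\textbf{(4)$\Rightarrow$(2).} Write $E(a)^{\sep}$ as an increasing union of finite Galois extensions $L_n$. For each $n$, the set of embeddings $L_n\to E(b)^{\sep}$ over $a\mapsto b$ with the property in (4) is finite and nonempty. Restricting embeddings of $L_{n+1}$ to $L_n$ preserves the property, because Galois-ness gives $\varphi(L_n)\cap K=\varphi(L_n\cap K)$ by intersecting. König's lemma then gives a compatible system, whose union is an embedding $\Phi\colon E(a)^{\sep}\to E(b)^{\sep}$ with $\Phi(E(a)^{\sep}\cap K)=\Phi(E(a)^{\sep})\cap K$.

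Applying the same argument symmetrically, or noting that $\Phi$ is onto $E(b)^{\sep}$ since separable closures are mapped onto separable closures, restriction yields the isomorphism $E(a)^{\sep}\cap K\to E(b)^{\sep}\cap K$.

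\textbf{(2)$\Rightarrow$(1).} Let $M=E(a)^{\sep}\cap K$ and let $\sigma\colon M\to E(b)^{\sep}\cap K$ be the isomorphism. $K/M$ is regular, since $M$ is relatively separably closed in $K$ (plus perfectness). Identify $M$ with its image via $\sigma$, so that $K$ is viewed as an extension of $M$ in two ways. Both extensions are regular, and both have the same relative algebraic closure $M$. By the elementary equivalence theorem for PAC fields, $(K,m)_{m\in M}\equiv(K,\sigma(m))_{m\in M}$, since $e$-free PAC fields have the embedding property and the Galois groups are both $\hat F_e$. Hence $\tp(a/E)=\tp(b/E)$.

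The hypothesis that $K$ is $e$-free, and not just PAC, is used exactly here, through \cite[Theorem 20.3.3]{friedjarden}.
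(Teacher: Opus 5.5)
The paper gives no proof of this lemma; it is quoted from \cite[Corollary 2.4]{chatzidakis2023measures}, so your cycle has to stand on its own. Most of it does. (1)$\Rightarrow$(3) is trivial. Your (4)$\Rightarrow$(2) is right, reading (4) as asking for an $E$-embedding extending $a\mapsto b$; for uncountable $E$, use nonemptiness of an inverse limit of finite nonempty sets instead of K\"onig's lemma. Your (2)$\Rightarrow$(1) via \cite[Theorem 20.3.3]{friedjarden} is also right once $K$ is perfect, as you rightly impose; the compatible automorphism of $G(K)\cong\hat{F}_e$ comes from Gasch\"utz's lemma. Your ``standard fact'', stated for arbitrary PAC fields, is false without that Galois-group compatibility, but you only apply it to $e$-free $K$.

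The genuine gap is in (3)$\Rightarrow$(4). You claim that for every intermediate field of $L/E(a)$ not contained in $K$, the minimal polynomial of its primitive element has no root in $K$. This is false. A test formula $\exists y\, h(x,y)=0$ only records whether \emph{some} $E(a)$-conjugate of that field lies in $K$. For example, take $E(a)=\mathbb{Q}$, $L=\mathbb{Q}(\sqrt[3]{2},\zeta_3)$ and $L\cap K=\mathbb{Q}(\sqrt[3]{2})$. Then $\mathbb{Q}(\zeta_3\sqrt[3]{2})\not\subseteq K$, yet $Y^3-2$ has a root in $K$. So your $\theta$ is in general false of $a$, and no test formula can ``pin down exactly which subfields lie in $K$''. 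The step you deferred as bookkeeping is the actual content of the implication.

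What closes it is a degree count using normality:
\begin{enumerate}
\item Let $\sigma_0\colon E(a)\to E(b)$ be the $E$-isomorphism given by the quantifier-free type.
\item Let $\delta$ be a primitive element of $F=L\cap K$ over $E(a)$, with minimal polynomial $h$. After clearing denominators, (3) gives a root $\delta'\in K$ of the image $h^{\sigma_0}$.
\item Extend $\sigma_0\cup\{\delta\mapsto\delta'\}$ to $\varphi\colon L\to E(b)^{\sep}$. Then $\varphi(F)\subseteq F':=\varphi(L)\cap K$.
\item Conversely, let $k$ be the minimal polynomial over $E(b)$ of a primitive element of $F'$. By (3), $k^{\sigma_0^{-1}}$ has a root in $K$.
\item $\varphi^{-1}$ sends a root of $k$ to a root of $k^{\sigma_0^{-1}}$ lying in $L$, and $L/E(a)$ is normal. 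Hence all roots of $k^{\sigma_0^{-1}}$ lie in $L$, and the root in $K$ lies in $F$.
\item Therefore $[F:E(a)]\geq[F':E(b)]\geq[\varphi(F):E(b)]=[F:E(a)]$, which forces $\varphi(F)=F'$.
\end{enumerate}
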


\begin{lemma}\label{lem:qetest}
    For all $e\in\omega\cup\{\omega\}$, let $E$ be a subfield of an $e$-free PAC field $K$ and $\varphi(\xx)$ a formula in $L(E)$. Then there exists a test formula $\psi(\xx)$ over $E$ such that $K\models(\forall\xx)\;(\varphi(\xx)\leftrightarrow\psi(\xx))$. 
\end{lemma}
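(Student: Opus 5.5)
We derive the statement from Lemma \ref{lem:cor2.4}, which says types over $E$ are determined by test formulas, and then apply the standard compactness argument for reducing formulas to a distinguished class. Fix $E\subseteq K$ and $\varphi(\xx)\in L(E)$. Let $\Psi$ be the set of test formulas over $E$ in the variables $\xx$. This set is closed under Boolean combinations by definition. We work in a sufficiently saturated elementary extension $K^*\succeq K$. We may do so because the statement $(\forall\xx)(\varphi\leftrightarrow\psi)$ is a sentence with parameters in $E$, so it transfers between $K$ and $K^*$. We also need $K^*$ to be an $e$-free PAC field in the sense used in Lemma \ref{lem:cor2.4}.

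\emph{Step one.} We check that for $e<\omega$ the class of $e$-free PAC fields is elementary, so $K^*$ is again $e$-free PAC. For $e=\omega$, $K^*$ is $\omega$-free in the sense fixed in this paper, namely having an elementary substructure with Galois group $\hat{F}_\omega$. This case needs a version of Lemma \ref{lem:cor2.4} for $\omega$-free PAC fields. Here I would cite the corresponding statement of \cite{chatzidakis2023measures}, or note that its proof uses only the embedding property of the absolute Galois group, which holds for $\omega$-free fields via the PAC Embedding Lemma.

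\emph{Step two.} Suppose tuples $a,b$ in $K^*$ satisfy the same formulas of $\Psi$. Then by (3)$\Rightarrow$(1) of Lemma \ref{lem:cor2.4}, $\tp(a/E)=\tp(b/E)$, so $K^*\models\varphi(a)\leftrightarrow\varphi(b)$.

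\emph{Step three.} We run the usual compactness argument. Let $a$ be any realization of $\varphi$ in $K^*$, and let $\Psi_a=\{\psi\in\Psi: K^*\models\psi(a)\}$. By step two, $\Psi_a\cup\{\neg\varphi\}$ is inconsistent with the elementary diagram of $E$ in $K^*$. By compactness there is a single $\psi_a\in\Psi_a$, closed under finite conjunction, with $K^*\models\psi_a\to\varphi$. The formulas $\{\neg\psi_a\}$ together with $\varphi$ are then inconsistent. Applying compactness again yields finitely many $a_1,\dots,a_k$ with $\varphi\leftrightarrow\bigvee_i\psi_{a_i}$ in $K^*$. This disjunction is again a test formula over $E$, and the equivalence descends to $K$.

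The main obstacle is step one, and specifically the case $e=\omega$. That case requires verifying that the type characterization of Lemma \ref{lem:cor2.4} holds in saturated models of the theory. The finite-$e$ case and the compactness step are routine.
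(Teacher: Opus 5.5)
Your proposal is correct and takes the same route as the paper. The finite-$e$ case is deduced from Lemma \ref{lem:cor2.4} (types over $E$ are determined by test formulas) by the standard compactness argument, and the case $e=\omega$ is handled by appeal to \cite[Section 4.1]{chatzidakis2023measures}. Your write-up just makes explicit what the paper leaves implicit: passing to a saturated elementary extension, which is legitimate since being $e$-free PAC is elementary for finite $e$. One small correction: the embedding property of $G(K)$ for $\omega$-free $K$ comes from solvability of finite embedding problems, not from the PAC Embedding Lemma.
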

\begin{proof}
    The finite case is a corollary of Lemma \ref{lem:cor2.4} as the type of any $a\in K$ is entirely determined by its truth value on test formulas over $E$. The $\omega$-case follows from \cite[Section 4.1]{chatzidakis2023measures}.
\end{proof}

\begin{lemma}
    Let $F\prec K$ be countable, $e$-free, and PAC with $K/F$ regular, $\varphi(\xx)$ a test formula over $F$, and $\aaa\in K\setminus F$ such that $K\models\varphi(\aaa)$. Then $\varphi(\aaa)$ is eventually true in $\cH$.
\end{lemma}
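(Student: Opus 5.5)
We need to show: if $\varphi(\aaa)$ is a test formula over $F$ true in $K$, then there is $\N\in\cH$ containing $\aaa$ with every larger $\N'\in\cH$ satisfying $\varphi(\aaa)$. The plan is to reduce to the atomic pieces of the test formula and transfer witnesses using regularity and Lemma \ref{lem:aclpacsubsets}.

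\textbf{Reduction.} Write $\varphi$ as a Boolean combination of formulas $\theta_j(\xx)=\exists y\, f_j(\xx,y)=0$ with $f_j\in F[\xx,y]$. It suffices to show that each $\theta_j(\aaa)$ has the same truth value in every sufficiently large $\N\in\cH$ as in $K$. Since there are finitely many $j$ and $\cH'$ is a chain, we can then take the maximum of the thresholds. As $\cH'$ covers $K$, fix $\N_0$ in the chain containing $F$ and $\aaa$, and let $\N\supseteq\N_0$ be arbitrary. Every such $\N$ is a subfield of $K$ with $K/N$ regular.

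\textbf{Positive instances.} Suppose $K\models\exists y\, f_j(\aaa,y)=0$ with witness $c\in K$. Then $c$ is a root of the one-variable polynomial $f_j(\aaa,y)\in N[y]$, where $N\supseteq F(\aaa)$. (If this polynomial is identically zero, any element of $N$ witnesses the formula, so assume it is not.) Thus $c$ is algebraic over $N$. By regularity, $N^{\alg}\cap K=N$, which is the content of Lemma \ref{lem:aclpacsubsets}. Hence $c\in N$, and $\N\models\theta_j(\aaa)$.

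\textbf{Negative instances.} Suppose $K\models\neg\exists y\, f_j(\aaa,y)=0$. Since $N\subseteq K$, there is no witness in $N$ either, so $\N\models\neg\theta_j(\aaa)$.

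\textbf{Conclusion and main obstacle.} Both kinds of instance agree in every $\N\supseteq\N_0$ already, so no growth in $e$ is needed. The resulting Boolean combination gives $\N\models\varphi(\aaa)$ for all $\N\supseteq\N_0$, i.e.\ $\varphi(\aaa)$ is eventually true. The step to handle carefully is the positive case. There one must make sure the witness is algebraic over the \emph{smaller} field $N$, not merely over $K$, and this is why the hypothesis $\N\supseteq F(\aaa)$ and the regularity of $K$ over each element of the chain are essential. One should also check that "eventually true in $\cH$" is meant along the fixed filtration $\cH'$, using niceness (Lemma \ref{lem:pac-nice}) if it is not.

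Finally, the hypotheses $\aaa\notin F$ and $F\prec K$ are not used beyond guaranteeing that $\varphi$ has parameters in a field contained in every large $\N$. The result can then be combined with Lemma \ref{lem:qetest} to give convergence of truth value for arbitrary formulas.
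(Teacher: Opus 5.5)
Your proof is correct and uses the same key fact as the paper: regularity gives $K\cap N^{\alg}=N$ for each $\N$ in the chain, so a one-variable polynomial over $N$ has a root in $N$ if and only if it has one in $K$. The paper packages this through a finite Galois extension $M/F(\aaa)$ with $K\cap M=K_n\cap M$, while you check the atomic pieces $\exists y\,f_j(\aaa,y)=0$ directly; this is slightly more explicit and avoids having to choose $M$ at all.
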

\begin{proof}
    Since $\varphi$ is a test-formula over $F$ there exists a finite Galois extension $M/F(\aaa)$ such that the question of whether a field $K'\models\varphi(\aaa)$ depends only on $K'\cap M$ \cite{chatzidakis2023measures}. 

    Take $K_n\in\cH'$ such that $K_n\supseteq F(\aaa)$. Since $K_n$ is regular in $K$, in particular we have $K$ and $K_n^{\alg}$ are linearly disjoint over $K_n$. Further, since $M$ is a Galois extension of $F(\aaa)$, it must be in the algebraic closure of any extension of $F(\aaa)$ including $K_n^{\alg}$. So $K\cap K_n^{\alg}=K_n$ and $K_n^{\alg}\cap M = M$, meaning $K\cap M=K_n\cap M$. 

    Then we conclude that if $K\models\varphi(\aaa)$, the same must be true of $K_n$. Since $K_n$ was chosen to be any substructure from $\cH'$ containing $F(\aaa)$, this proves $\varphi(\aaa)$ is eventually true in $\cH'$.
\end{proof}

Using the fact that we have quantifier elimination down to test formulas in both the $e$-free and $\omega$-free case, we get convergence of truth value as a corollary.

\begin{corollary}\label{cor:ctvfields}
    (Convergence of truth value). For any $\aaa\in K$ and formula $\varphi(\xx)\in\lang$, if $K\models\varphi(\aaa)$ then $\varphi(\aaa)$ is eventually true in $\cH'$.
\end{corollary}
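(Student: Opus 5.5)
The plan is to reduce convergence of truth value for an arbitrary $L$-formula to the preceding lemma, using Lemma \ref{lem:qetest}. The parameter field for the test formulas will be the base of the chain, and the key point is that the test-formula equivalent chosen in $K$ must remain an equivalent in every $K_n\in\cH'$.

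\textbf{Choosing the test formula.} Let $F_0$ be the base of the chain in $\cH'$. This is the $e_0$-free PAC field over which $K$ is regular, obtained from Lemma \ref{lem:pacbase}; the chain is then $F_0\subseteq K_1\subseteq K_2\subseteq\cdots$. Given $\aaa\in K$ and $\varphi(\xx)\in\lang$ with $K\models\varphi(\aaa)$, apply the $\omega$-case of Lemma \ref{lem:qetest} in $K$ with $E=F_0$, or even with $E$ the prime field. This gives a test formula $\psi(\xx)$ over $E$ with $K\models\forall\xx(\varphi\leftrightarrow\psi)$, and in particular $K\models\psi(\aaa)$.

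\textbf{Transferring $\psi$ to the chain.} Apply the preceding lemma with $F=F_0$ (for $\aaa\notin F_0$) to conclude that $\psi(\aaa)$ is eventually true in $\cH'$. The case $\aaa\in F_0$ is handled the same way: the argument in that proof only used $K\cap M=K_n\cap M$ for $K_n\supseteq F(\aaa)$, which still holds. So there is an $N$ such that $K_n\models\psi(\aaa)$ for all $n\ge N$.

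\textbf{Main obstacle: uniformity of the equivalence.} It remains to pass from $\psi$ back to $\varphi$ in each $K_n$. I expect this to be the hardest step, because Lemma \ref{lem:qetest} produces, for each $K_n$, some test formula $\psi_n$ equivalent to $\varphi$ in $K_n$, and we need $\psi_n$ to be the same formula $\psi$ for all large $n$.

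The approach I would try first is to work in an ultraproduct. Suppose there are infinitely many $n$ with $K_n\not\models\forall\xx(\varphi\leftrightarrow\psi)$. Take a nonprincipal ultraproduct $K^*$ of those $K_n$. Then $K^*$ is a perfect PAC field, and its absolute Galois group, with the same absolute numbers, is $\omega$-free, since $e_n\to\infty$ and freeness of rank $\ge k$ is first order in the embedding-problem sense. By Jarden's theorem \cite[Theorem 4.2]{jarden1976elementary} cited earlier, $K^*\equiv K$, so $K^*\models\forall\xx(\varphi\leftrightarrow\psi)$. By \L o\'s, almost all of these $K_n$ satisfy the equivalence, which is a contradiction. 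Hence the equivalence holds in $K_n$ for all large $n$.

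The point needing care is that the absolute numbers of each $K_n$ agree with those of $K$. This follows from Lemma \ref{lem:aclpacsubsets}, since $\acl_{K_n}(\varnothing)=\acl_K(\varnothing)$. With that in place, we get $K_n\models\varphi(\aaa)$ for all large $n$, so $\varphi(\aaa)$ is eventually true in $\cH'$.
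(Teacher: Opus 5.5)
Your argument is correct, and it takes a more careful route than the paper. The paper derives the corollary in one sentence. Lemma \ref{lem:qetest} reduces every formula to a test formula (in both the $e$-free and $\omega$-free cases), and the preceding lemma shows that test formulas at $\aaa$ are eventually true. The paper never checks that a \emph{single} test formula $\psi$ is equivalent to $\varphi$ both in $K$ and in all sufficiently large $K_n$. Applying quantifier elimination in each field separately only produces some $\psi_n$ depending on $n$. That uniformity is exactly what you isolate and then prove with the ultraproduct argument. The paper's route buys brevity; yours closes the step the paper leaves implicit, at the cost of invoking Jarden's classification and some elementary facts about ultraproducts of $e$-free PAC fields. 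As a by-product, you get that every nonprincipal ultraproduct of the chain is elementarily equivalent to $K$, a converse-type companion to Theorem \ref{thm:CTVultraproduct}.

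A few details to tighten. First, commit to $E$ being the prime field, so that $\forall\xx(\varphi\leftrightarrow\psi)$ is a parameter-free sentence to which Jarden's theorem applies. With $E=F_0$ you would need $K^*\equiv_{F_0}K$, which is the relative elementary equivalence theorem for $\omega$-free PAC fields, not the result you cited. Second, justify the hypotheses of Jarden's theorem. Each $K_n$ is perfect because it is relatively algebraically closed in the perfect field $K$. $K^*$ is $\omega$-free because, for each $m$, the elementary statement ``every finite embedding problem whose group has order at most $m$ is solvable'' holds in $K_n$ as soon as $e_n\geq m$ (Gasch\"utz). Third, Lemma \ref{lem:aclpacsubsets} only tells you that every $K_n$ has the same absolute numbers $E_0$ as $K$. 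For $K^*$ you still need a short \L o\'s argument. An element of $K^*$ algebraic over the prime field, with minimal polynomial $f$, is represented by a sequence almost all of whose entries are roots of $f$ in $E_0$. Since $f$ has only finitely many roots, that sequence is almost constant, so the absolute numbers of $K^*$ are the diagonal copy of $E_0$. Finally, if you are willing to use the relative theorem, you can bypass test formulas entirely. For $\aaa\in K_m$, the same argument shows $K_m$ is relatively algebraically closed in any ultraproduct of $(K_n)_{n\geq m}$. So that ultraproduct is $\equiv_{K_m}K$, and $\varphi(\aaa)$ holds in all but finitely many $K_n$.
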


\begin{lemma}\label{lem:fieldhomogen}
    (Homogeneity) For each $K_n\in\cH'$, $a,b\in K$, and $C\subset K$, if $a\equiv_C^K b$ then $a\equiv_{C\cap K_n}^{K_n} b$.
\end{lemma}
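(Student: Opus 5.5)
We prove homogeneity through the characterization of types in $e$-free PAC fields given by Lemma \ref{lem:cor2.4}, combined with regularity of $K$ over $K_n$. The idea is that types in $K_n$ over $E:=C\cap K_n$ are controlled by Galois-theoretic data over $E(a)$. Regularity then says that this data computed inside $K_n$ agrees with the same data computed inside $K$.

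First reduce to the case where $a,b$ are tuples from $K_n$; otherwise the statement $a\equiv^{K_n}_{C\cap K_n}b$ is vacuous or meaningless. Set $E=C\cap K_n$, which is a subset of $K_n$. We may replace $E$ by the subfield it generates, since types over a set and over the field it generates determine one another. From $a\equiv^K_C b$ we get $a\equiv^K_E b$, since $E\subseteq C$. By Lemma \ref{lem:cor2.4} applied in $K$ (using the $\omega$-free analogue from \cite{chatzidakis2023measures}, or Lemma \ref{lem:qetest} with test formulas), $a$ and $b$ satisfy the same test formulas over $E$ in $K$. Equivalently, there is an $E$-isomorphism
\[\Phi: E(a)^{\sep}\cap K\to E(b)^{\sep}\cap K\]
taking $a$ to $b$.

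Next transfer this to $K_n$. Since $K/K_n$ is regular, $K_n^{\alg}\cap K=K_n$. Because $E(a),E(b)\subseteq K_n$, we have $E(a)^{\sep}\subseteq K_n^{\alg}$, and hence
\[E(a)^{\sep}\cap K=E(a)^{\sep}\cap K_n,\]
with the same identity for $b$. This is the same computation as in Lemma \ref{lem:aclpacsubsets} and in the proof of convergence of truth value. So $\Phi$ is an $E$-isomorphism $E(a)^{\sep}\cap K_n\to E(b)^{\sep}\cap K_n$ sending $a$ to $b$. Since $K_n$ is an $e$-free PAC field and $E$ is a subfield of it, the implication $(2)\Rightarrow(1)$ of Lemma \ref{lem:cor2.4} gives $\tp^{K_n}(a/E)=\tp^{K_n}(b/E)$, as required.

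The main obstacle is the first application of Lemma \ref{lem:cor2.4}, in $K$. That lemma is stated for $e$-free fields with $e$ finite, while $K$ is $\omega$-free. There are two ways to handle this. The first is to use Lemma \ref{lem:qetest} for $e=\omega$: types over $E$ in $K$ are determined by test formulas. Then argue directly that the truth of a test formula $\psi(a)$ depends only on $K\cap M$ for a finite Galois extension $M$ of $E(a)$. By the regularity argument above, $K\cap M=K_n\cap M$, so $a$ and $b$ agree on test formulas over $E$ in $K_n$. Then condition (3) of Lemma \ref{lem:cor2.4}, applied in $K_n$, finishes the proof. The second way avoids this issue by working with this test-formula route throughout. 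One further point needs care: $E$ must be a field. We can pass to the field generated by $C\cap K_n$, which is contained in $K_n$ and is definable over $C\cap K_n$, so the types are unaffected.
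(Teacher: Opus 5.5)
Your proposal is correct and follows essentially the paper's argument. Both take the Galois-theoretic witness to $a\equiv^K b$ (an isomorphism of relative algebraic closures) and move it down to $K_n$ using regularity of $K/K_n$, namely $E(a)^{\alg}\cap K=E(a)^{\alg}\cap K_n$ (the content of Lemma~\ref{lem:aclpacsubsets}), and then apply the type criterion of Lemma~\ref{lem:cor2.4} in the $e$-free field $K_n$.

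The ``main obstacle'' you flag is not a real one. The direction (1)$\Rightarrow$(2) you need in $K$ holds in any field: restrict an automorphism of a monster model that fixes $E$ and sends $a$ to $b$. Also, working directly over $E=C\cap K_n$, as you do, avoids the paper's reduction to algebraically closed $C$. That reduction is unnecessary, and not quite justified, since $a\equiv_C b$ need not imply $a\equiv_{\acl(C)}b$.
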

\begin{proof}
    Without loss of generality, suppose $C$ is algebraically closed. Since $a\equiv_C^Kb$, we have an isomorphism $\varphi:(C(a))^{\alg}\to (C(b))^{\alg}$ fixing $C$ such that $\varphi((C(a))^{\alg}\cap K)=(C(b))^{\alg}\cap K$ and $\varphi(a)=b$. 
    
    Let $\varphi': ((C\cap K_n)(a))^{\alg}\to ((C\cap K_n)(b))^{\alg}$ be the restriction of $\varphi$. Clearly $\varphi'(a)=b$, so it remains to show that $\varphi'(((C\cap K_n)(a))^{\alg}\cap K_n)=((C\cap K_n)(b))^{\alg}\cap K_n$.

    Note that $((C\cap K_n)(a))^{\alg}\cap K_n=\acl_{K_n}(C\cap K_n,a)$. Lemma \ref{lem:aclpacsubsets} gives us that this is equal to $\acl_K(C\cap K_n,a)$. By the same argument $((C\cap K_n)(b))^{\alg}\cap K_n=\acl_K(C\cap K_n,b)$. Then since $\varphi(\acl_K(C\cap K_n,a))=\acl_K(C\cap K_n,b)$, we have the desired isomorphism on the restricted domain, and therefore $a\equiv_{C\cap K_n}^{K_n}b$.
\end{proof}

\begin{theorem}
    $\cH'$ is a nice filtration of $\cH$ and therefore approximates $K$.
\end{theorem}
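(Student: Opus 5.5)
The plan is to mirror the proof of Theorem \ref{thm:feq-approx}. We check that the chain $\cH'$ of $e$-free PAC fields built from Lemma \ref{lem:pacbase} and Theorem \ref{thm:pacapprox} is a directed system satisfying conditions 1 and 3--7. This makes it a nice filtration of $\cH$. Then Lemma \ref{lem:pac-nice} and Theorem \ref{thm:nice-filtration} transfer all properties of $\ind^{\lim}$ to $\cH'$.

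Directedness is immediate, since $\cH'$ is a chain $K_0\subseteq K_1\subseteq\cdots$ under inclusion. Covering holds by the choice of representative $K=\bigcup\cH'$ made before Fact \ref{fact:pac-fork}. The remaining conditions are quoted from the results already established.
\begin{enumerate}
    \item Convergence of truth value is Corollary \ref{cor:ctvfields}.
    \item Stabilization of non-forking is Corollary \ref{cor:pacsnf}.
    \item Strong finite character of $\nind^{\lim}$ is Corollary \ref{cor:pacsfc}.
    \item Stabilization of algebraic closure is Corollary \ref{cor:pacstabacl}, which holds for any filtration of $\cH$ to a directed system.
    \item Homogeneity is Lemma \ref{lem:fieldhomogen}.
\end{enumerate}
Each element of $\cH$ is a bounded ($e$-free) PAC field. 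Its theory is therefore simple, with forking given by Fact \ref{fact:pac-fork}, so the hypotheses on the approximating structures are met.

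We also note that $\cH$ itself satisfies covering and automorphism invariance. It is defined as all substructures of $K$ isomorphic to some member of the chain, so it is closed under automorphisms of $K$, and it contains $\cH'$, whose union is $K$. Lemma \ref{lem:pac-nice} says $\cH$ is nice. Theorem \ref{thm:nice-filtration} then says that approximating via the nice filtration $\cH'$ keeps all the properties of $\ind^{\lim}$. That theorem requires $K$ to be strongly homogeneous, and this has to be arranged, for instance by working in a suitably homogeneous representative or observing that the invariance argument only needs automorphisms realizing the relevant types.

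The main obstacle is making sure the corollaries were really proved for the fixed chain $\cH'$ rather than for $\cH$. Convergence of truth value rests on the lemma about test formulas. That lemma assumes the formula's parameters lie in some $F$ in the chain, and it uses that any $K_n\supseteq F(\aaa)$ is regular in $K$; regularity holds for every member of $\cH'$ by construction. To reduce an arbitrary formula over $\aaa$ to this setting, we would take $F=K_0$, apply Lemma \ref{lem:qetest} in $K$ to get an equivalent test formula, and also use the $e$-free case of Lemma \ref{lem:qetest} for $K_n$ once $n$ is large. The point needing care is that the same test formula is equivalent to $\varphi$ in $K_n$ for all large $n$, a uniformity analogous to Lemma \ref{lem:feq-ctv}. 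If this is not automatic, I would obtain it by the same induction on quantifiers, using Galois-theoretic data of a fixed finite extension $M$ to decide existential witnesses.
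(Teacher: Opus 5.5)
Your proposal is correct and follows the paper's proof, which likewise notes covering by construction and then cites Corollaries \ref{cor:ctvfields}, \ref{cor:pacsnf}, \ref{cor:pacsfc}, \ref{cor:pacstabacl} and Lemma \ref{lem:fieldhomogen}; your explicit appeal to Lemma \ref{lem:pac-nice} and Theorem \ref{thm:nice-filtration} for the ``therefore approximates'' clause spells out what the paper does explicitly in the $T^*_{\feq}$ case. The two caveats you raise are fair points that the paper's proof also leaves unaddressed: the strong homogeneity of the countable field $K=\bigcup\cH'$ that Theorem \ref{thm:nice-filtration} requires, and the uniformity of the test-formula reduction behind Corollary \ref{cor:ctvfields}. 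They concern the cited ingredients and do not change the structure of the argument.
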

\begin{proof}
\begin{enumerate}
    \item \textbf{$\cH'$ covers $K$}: By construction, since we take our approximated field to be the union of the constructed chain of subfields.
    \item \textbf{Convergence of truth value}: This is Corollary \ref{cor:ctvfields}.
    \item \textbf{Stabilization of non-forking}: This is Corollary \ref{cor:pacsnf}.
    \item \textbf{Strong finite character of limit independence}: This is Corollary \ref{cor:pacsfc}.
    \item \textbf{Stabilization of algebraic closure}: This is Corollary \ref{cor:pacstabacl}.
    \item \textbf{Homogeneity}: This is Lemma \ref{lem:fieldhomogen}.
\end{enumerate}    
\end{proof}

\section{Non-Examples} \label{sec:nonex}

It is also interesting to note that there are theories that cannot be approximated by finite-dimensional substructures using this framework. The main argument in this section borrows from a similar argument for showing certain theories are not pseudofinite.

Let $\lang_{vs}$ be a two-sorted language with sorts $V$ and $K$. On $V$, there is a constant symbol $0_V$, binary functions $V^2\to V$ labeled $+_V$ and $-_V$, and for each $n$ we have an $n$-ary relation symbol $\theta_n$. On $K$ we have the language of rings: constant symbols $0_K$ and $1_K$, and binary functions $K^2\to K$ labeled $+_K,-_K,$ and $\times_K$. We also have a function symbol $\cdot:K\times V\to V$ and for each $n$ a function symbol $\pi_{n,i}:V^{n+1}\to K$.

The following lemma is adapted from \cite[Lemma 2.6]{macpherson2025omega}; the key difference being that instead of considering finite models of the theory over a finite field, we are considering finite-dimensional models over an algebraically closed field. Unless otherwise stated, ``finite dimension'' in this section can be taken to mean ``finite Morley rank''.

\begin{lemma}\label{lem:nonapprox}
    Let $(V,W,\beta)$ be a 2-sorted structure where $V,W$ are vector spaces over a field $F$ (each viewed in the two-sorted language $\lang_{vs}$ of a vector space over $F$) of dimension at least 2 and $\beta:V\times V\to W$ is bilinear. Let $\chi$ be a sentence expressing this information and $\psi$ be the sentence \[(\forall\text{ linearly independent }v_1,v_2\in V)(\forall w_1,w_2\in W)\]
    \[(\exists z\in V)(\beta(v_1,z)=w_1\land\beta(v_2,z)=w_2).\]
    Then $\chi\land\psi$ has no model of finite Morley rank.
\end{lemma}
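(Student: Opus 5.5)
The plan is to turn $\psi$ into a statement about a pair of linear maps $V\to V^*$, and then show that a finite-dimensional $V$ cannot support such a pair. Fix a model $(V,W,\beta)$ of $\chi\land\psi$ and suppose it has finite Morley rank. For $v\in V$ write $L_v=\beta(v,-)\colon V\to W$; this is linear, and $v\mapsto L_v$ is linear. Since $\dim W\ge 2$, pick two linearly independent functionals $\lambda,\mu\in W^*$; all we need is a definable surjection $W\to F^2$, for instance a linear projection definable with finitely many parameters. Define $\Lambda,M\colon V\to V^*$ by $\Lambda v=\lambda\circ L_v$ and $Mv=\mu\circ L_v$.

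The key consequence of $\psi$ is the following. For linearly independent $v_1,v_2$, the map $z\mapsto(\beta(v_1,z),\beta(v_2,z))$ is onto $W^2$. Composing with $(\lambda,\mu)$ on each coordinate gives a surjection $V\to F^4$. Hence the four functionals $\Lambda v_1,Mv_1,\Lambda v_2,Mv_2$ are linearly independent in $V^*$. In particular, for every $v\neq 0$ the functionals $\Lambda v$ and $Mv$ are independent (complete $v$ to an independent pair using $\dim V\ge 2$), and $\Lambda$ is injective.

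Next I reduce to finite dimension. If $F$ is finite, then $V$ and $W$ must be finite: otherwise, for each $k$ a definable copy of $F^k$ (via a definable projection) would give unbounded ranks. If $F$ is infinite, then $F$ is an infinite field of finite Morley rank, hence algebraically closed by Macintyre's theorem. Also $\dim V$ is finite, since $F^k$ definably injects into $V$ for each $k\le\dim V$, giving $\mathrm{RM}(V)\ge k\,\mathrm{RM}(F)$. So in either case $n=\dim V<\infty$.

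With finite $n$, $\Lambda\colon V\to V^*$ is injective between spaces of equal dimension, hence an isomorphism. So $T=\Lambda^{-1}M$ is a linear endomorphism of $V$.
\begin{itemize}
\item If $F$ is algebraically closed, $T$ has an eigenvector $v\neq 0$ with $Tv=cv$. Then $Mv=c\Lambda v$, contradicting the independence of $\Lambda v$ and $Mv$.
\item If $F=\mathbb F_q$ is finite, an eigenvector need not exist, so I would count instead. Consider the map $\Phi(v,(a,b))=a\Lambda v+bMv$ on pure tensors $v\otimes x$, taken projectively. The independence statement above, applied both to independent pairs and to a single line, shows that $\Phi$ induces an injection from the projectivized pure tensors of $V\otimes F^2$ into $\mathbb P(V^*)$. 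The number of projective pure tensors is $\frac{q^n-1}{q-1}(q+1)$, while $|\mathbb P(V^*)|=\frac{q^n-1}{q-1}$, which is a contradiction.
\end{itemize}

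The main obstacle is the finite-dimensionality step in the infinite-field case. One must check carefully that $\dim V=\infty$ forces infinite Morley rank in this two-sorted language, that each $\lambda,\mu$ is definable with finitely many parameters, and that Macintyre's theorem applies to $F$ as a definable field. The linear-algebra contradiction itself is routine once finite dimensionality and algebraic closedness are established.
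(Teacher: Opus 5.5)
The finite-field branch of your reduction does not work. If $F=\mathbb{F}_q$, a definable copy of $F^k$ is a finite set of Morley rank $0$, so nothing unbounded appears. In fact an infinite-dimensional $\mathbb{F}_q$-vector space is strongly minimal on its own, so infinite dimension over a finite field does not force infinite rank. Your claim that $V$ and $W$ must then be finite is therefore unsupported, and any argument here has to use $\beta$ and $\psi$.

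If $W$ is finite, the repair is easy. Then $\lambda,\mu$ are definable, and since $\Lambda$ is injective, for independent $v_1,\dots,v_m$ the definable subgroups $\bigcap_{i\le m}\ker\Lambda v_i$ have index $q^m$ in $V$. When $\dim V=\infty$ this contradicts the descending chain condition on definable subgroups of an $\omega$-stable group.

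If $W$ is infinite, however, $\lambda$ and $\mu$ need not be definable, and nothing in your proposal excludes a model of finite rank; a new idea would be needed. The paper's proof does not supply one either: it simply asserts in its first line that a non-algebraically-closed $F$ gives infinite Morley rank. By contrast, the step you flagged as the main obstacle, $\dim V<\infty$ for infinite $F$, is fine as you argued it, via Macintyre's theorem and the definable injections $F^k\to V$.

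For algebraically closed $F$, and for finite structures, your argument is correct and takes a different route from the paper. The paper works with the kernels $K_v$ and passes to their $d$-dimensional annihilators in $(V')^*$, which meet pairwise in $0$ because $K_{v_1}+K_{v_2}=V'$. It then computes the rank of the incidence set $\tilde X$ in two ways to get $n-1+d\le n$.

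Your planes $\langle\Lambda v, Mv\rangle$ lie inside those annihilators, since both functionals vanish on $K_v$. So you exploit the same incidence obstruction with $d$ replaced by $2$, but you get the contradiction from an eigenvector of $\Lambda^{-1}M$ (or the Segre count over $\mathbb{F}_q$). The paper instead uses Morley-rank computations on interpretable sets such as $V'/{\sim}$ and the dual space.

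What your route buys:
\begin{enumerate}
\item It uses finite Morley rank only to make $F$ algebraically closed and $\dim V$ finite, after which everything is linear algebra.
\item It never has to identify the set of kernels with the lines of $V'$.
\item It also disposes of finite models, which the paper's proof does not treat.
\end{enumerate}
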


\begin{proof}
    We may assume $F$ is algebraically closed; otherwise we have infinite Morley rank. 

    The first part of this proof closely follows the original. Suppose for a contradiction that $(V',W',\beta)$ is a model of $\chi\land\psi$ with $\dim(V')=n$ and $\dim(W')=d$ such that $n,d>1$. The assumption of finite Morley rank implies, in particular, that both $n$ and $d$ are finite. For each $v\in V'$, let $\beta_v:V'\to W'$ be the map given by $\beta_v(x)=\beta(v,x)$, and let $K_v$ denote the kernel of $\beta_v$. This map is linear and surjective, since $(V',W',\beta)\models\psi$. By the same arguments as the original proof, $\psi$ ensures that for any linearly independent $v_1,v_2\in V'$, $K_{v_1}+K_{v_2}=V'$: suppose $w_i=\beta(v_i,a_i)$ for $i=1,2$. Then $\psi$ implies that there is some $z\in V'$ such that $\beta(v_1,z)=w_1$ and $\beta(v_2,z)=w_2$, therefore $z\in(K_{v_1}+a_1)\cap (K_{v_2}+a_2)$. This shows any two cosets intersect, so for arbitrary $z\in V'$ there is some $u_1\in K_{v_1}\cap(K_{v_2}+z)$, so $u_1=u_2+z$ for some $u_2\in K_{v_2}$. Thus we can write $z=u_1-u_2\in K_{v_1}+K_{v_2}$. 

    Let $\sim$ be the equivalence relation defined on $V'$ by $v\sim w \iff K_v=K_w$, definable by the formula $\forall x\in V'(\beta(v,x)=0\leftrightarrow\beta(w,x)=0)$. Let $K\cong V'/\sim$ be the interpretable set representing this collection of kernels, and note that each element of $K$ has codimension $d$ in $V'$ and any two sum to $V'$. We also have that $K_v=K_w$ if and only if $\spann(v)=\spann(w)$\textemdash this is because $\beta_v$ and $\beta_w$ have the same kernel if and only if they are scalar multiples, so since $\beta$ is nondegenerate it follows $v$ and $w$ are scalar multiples. Then we can identify $K$ with the set of lines in $V'$ via the map taking $K_v$ to $\spann(v)$. This is a definable isomorphism of interpretable sets which preserves Morley rank, so $K$ has dimension $n-1$. 
    
    By moving to the dual space $(V')^*$ and taking annihilators, we get a set $X$ of subspaces of $(V')^*$ corresponding to the kernels $K_v$. Since the codimension of each $K_v$ in $V'$ was $d$, the dimension of each subspace in $X$ is $d$, and since the kernels all pairwise summed to the full space, the annihilators will all have pairwise intersection 0. Moving to the dual space does not change the dimension of the set, so $X$ will also have dimension $n-1$. 

    To produce a contradiction, we consider the set $\tilde{X}=\{(W,v):W\in X,v\in W-\{0\}\}$ and compute its dimension in two different ways.

    Let $\pi_1:\tilde{X}\to X$ be the projection onto the first coordinate and $\pi_2:\tilde{X}\to (V')^*-\{0\}$ be the projection onto the second coordinate. The preimage $\pi_1^{-1}$ of any $W\in X$ is the set $\{(W,v):v\in W-\{0\}\}$ which is in definable bijection with the set of non-zero vectors in $W$ and therefore has dimension $d$. Then \[\rk(\tilde{X})=\rk(X)+\rk(\pi^{-1}(W))=n-1+d\]
    Given $v\in(V')^*-\{0\}$, the preimage under $\pi_2$ is either empty (so has dimension $=-\infty$) or contains exactly one element $(W,v)$, and thus has dimension 0. In either case $\rk(\pi_2^{-1}(v))\leq 0$, so 
    \[\rk(\tilde{X})\leq\rk((V')^*-\{0\})+0=n\]

    Combining these results we have $\rk(\tilde{X})=n-1+d\leq n$, which is true if and only if $d\leq 1$. But $d$ was chosen to be at least 2, which is a contradiction.
\end{proof}

Note that a sentence $\varphi$ having no finite-dimensional model implies that a theory realizing $\varphi$ is not approximable by finite-dimensional substructures. If it were, then by Theorem \ref{thm:CTVultraproduct} the convergence of truth value condition would imply that $\varphi$ were eventually true in the approximation, meaning it would have a finite-dimensional model.

We will use this to give two examples of theories that are not approximable by finite-dimensional substructures.

\subsection{Vector spaces with $k$-linear forms}\label{subsec:klinear}

The first example of a theory that is not approximable is that of a vector space with a $k$-linear form, as studied in \cite{chernikov2024n}.

We first fix some notation: let $V$ be a vector space over $K$. We will denote a $k$-linear form on $V$ by $[-,\dots,-]_k: V^k\to K$. Using the notation from \cite[Section 2.1]{chernikov2024n}, we will write $\lozenge^{k-1}V$ to mean either $\Sym^{k-1}V$ or $\bigwedge^{k-1}V$ so as to discuss the symmetric and alternating cases uniformly. As described in \cite{chernikov2024n}, a $k$-linear form on $V^k$ gives rise to a bilinear form on $(\lozenge^{k-1}V)\times V$ which we will denote $[-,-]_2$, defined by 
\[\left[\overline{\sum_{i=1}^nk_i(v_{i,1}\otimes\dots\otimes v_{i,k-1})},v\right]_2:= \sum_{i=1}^nk_i[v_{i,1},\dots,v_{i,k-1},v]_k.\]

\begin{definition}
    \cite[Definition 2.1]{chernikov2024n} We say that the $k$-linear form $[-,\dots,-]_k$ is \textit{generic} if for any $n\in\nN$, any linearly independent elements $t_1,\dots,t_n\in\lozenge^{k-1}V$ and any $k_1,\dots, k_n\in K$ there is $w\in V$ such that $[t_i,w]_2=k_i$ for all $i\in[n]$.     
\end{definition}

We now work in an extension of the language $\lang_{vs}$ which we will denote $\lang_k$, in which we add one additional function symbol $\beta:V^k\to K$.

We define the theory $T_0$ via the following axioms:
\begin{enumerate}
    \item $K$ is a field.
    \item $V$ is a vector space over $K$, with scalar multiplication given by $\cdot:K\times V\to V$. 
    \item The relation $\theta_n(v_1,\dots,v_n)$ holds if and only if $v_1,\dots, v_n$ are linearly independent. If they are linearly independent and $w=\sum_{j=1}^n\alpha_iv_i$ is in their span, then $\pi_{n,i}(v_1,\dots,v_n,w)=\alpha_i$. Otherwise $\pi_{n,i}(v_1,\dots,v_n,w)=0$. 
    \item $\beta$ is a symmetric or alternating $k$-linear form.
\end{enumerate}

Take $T$ to be the model companion of $T_0$, axiomatized by $T_0$ along with a schema asserting that $\beta$ is generic.

\begin{definition}
    Let $\sim$ be the following definable equivalence relation: say $(v_1,\dots,v_{k-1})\sim(w_1,\dots,w_{k-1})$ if and only if $v_1\lozenge\dots\lozenge v_{k-1}=w_1\lozenge\dots\lozenge w_{k-1}$, where $\lozenge$ is taken as the relevant operation in the symmetric and alternating cases.
\end{definition}

\begin{lemma}
    To specify the behavior of the equivalence relation we need to distinguish between the two cases:
    \begin{enumerate}
        \item In the symmetric case, $\vv\sim\ww$ if and only if both products are zero or there exists a permutation $\sigma\in S_{k-1}$ and scalars $\lambda_1,\dots,\lambda_{k-1}$ such that $w_i=\lambda_i v_{\sigma(i)}$ for each $i$ and $\prod\lambda_i=1$.
        \item In the alternating case, $\vv\sim\ww$ if and only if both tuples are linearly dependent or there exists $M\in\SL_{k-1}(K)$ with $Mv_i=w_i$ for each $i$.
    \end{enumerate}
\end{lemma}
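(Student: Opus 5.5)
For the symmetric case I would identify $\Sym^{k-1}V$ with the degree-$(k-1)$ homogeneous part of the symmetric algebra $\Sym(V)$. After fixing a basis of $V$ (possibly infinite), $\Sym(V)$ is the polynomial ring $K[X_j : j\in J]$. This identification holds in any characteristic, since $\Sym(V)$ is defined as a quotient of the tensor algebra and not via symmetrizers. Under it, each $v\in V$ becomes a homogeneous linear polynomial, and $v_1\lozenge\dots\lozenge v_{k-1}$ becomes the product of these polynomials.

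The proof of (1) then runs in three steps.
\begin{enumerate}
\item Since the polynomial ring is an integral domain, the product is $0$ if and only if some $v_i=0$. This gives the ``both products are zero'' clause.
\item If all $v_i,w_i$ are nonzero, each is a degree-one polynomial, hence irreducible. The polynomial ring is a UFD, and any polynomial involves only finitely many variables, so we may work in a finitely generated polynomial ring. Unique factorization gives a permutation $\sigma$ with $w_i$ associate to $v_{\sigma(i)}$.
\item The units are the nonzero constants, so $w_i=\lambda_i v_{\sigma(i)}$ with $\lambda_i\in K^\times$. Comparing the two equal products gives $\prod\lambda_i\cdot v_1\cdots v_{k-1}=v_1\cdots v_{k-1}\neq 0$, hence $\prod\lambda_i=1$.
\end{enumerate}
The converse direction is immediate from commutativity and multilinearity.

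For the alternating case I would first recall the standard fact that $v_1\wedge\dots\wedge v_{k-1}=0$ if and only if the $v_i$ are linearly dependent. To prove it, extend an independent family to a basis and use the induced basis of $\bigwedge^{k-1}V$. This gives the ``both dependent'' clause. The converse of the main clause is the computation $Mv_1\wedge\dots\wedge Mv_{k-1}=\det(M)\,v_1\wedge\dots\wedge v_{k-1}$. Here $Mv_i$ means $\sum_j M_{ij}v_j$, so $\SL_{k-1}$ acts on the span of the tuple.

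For the forward direction, set $\omega=v_1\wedge\dots\wedge v_{k-1}\neq 0$. I would show that $U_\omega:=\{x\in V: x\wedge\omega=0\}$ equals $\spann(v_1,\dots,v_{k-1})$. The inclusion $\supseteq$ is clear. For $\subseteq$, extend $v_1,\dots,v_{k-1}$ to a basis and expand $x$ in it. Any basis vector $e$ outside the $v_i$ contributes the term $e\wedge v_1\wedge\dots\wedge v_{k-1}$, which is a basis element of $\bigwedge^{k}V$, so its coefficient must vanish. It follows that $\omega$ determines the span. Hence if $w_1\wedge\dots\wedge w_{k-1}=\omega$, then the $w_i$ lie in $\spann(v_1,\dots,v_{k-1})$, so $w_i=\sum_j M_{ij}v_j$ for some matrix $M$. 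The computation above then forces $\det M=1$.

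I expect the main obstacle to be this span-recovery step, together with checking that the basis arguments are uniform in the characteristic. Characteristic $2$ is where symmetric versus alternating could cause trouble. I would handle it by always using the quotient definitions of $\Sym$ and $\bigwedge$, so that the basis descriptions remain valid.
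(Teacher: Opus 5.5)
Your proposal is correct and follows the paper's proof. The symmetric case uses unique factorization in a polynomial ring. The alternating case uses a change-of-basis matrix between two bases of the same $(k-1)$-dimensional subspace, whose determinant is forced to be $1$ by $\bigwedge^{k-1}M=\det(M)$. Your version is more careful than the paper's. Working in $\Sym(V)$ over a basis of all of $V$ avoids the paper's tacit assumption, in identifying $\Sym(\spann(v_1,\dots,v_{k-1}))$ with $K[x_1,\dots,x_{k-1}]$, that the $v_i$ are independent and that the $w_i$ lie in their span. Your $U_\omega$ argument also proves the equality of spans that the paper simply asserts.
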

\begin{proof}
    For the symmetric case, suppose $v_1\odot\dots\odot v_{k-1}=w_1\odot\dots\odot w_{k-1}$ and both products are non-zero. Under the standard identification of $\Sym(\spann(v_1,\dots, v_{k-1}))$ with $K[x_1,\dots,x_{k-1}]$, each $v_i$ corresponds to $x_i$ and each $w_i$ to a linear polynomial $p_i$ \cite[XVI, Proposition 8.1]{lang2005algebra}. The equality becomes $x_1\dots x_{k-1}=p_1\dots p_{k-1}$, so since the polynomial ring is a unique factorization domain, each $p_i$ must be a scalar multiple of some $x_j$, so $p_i=\lambda_ix_{\sigma(i)}$ for some permutation $\sigma$. Then we have $\prod x_i=\prod \lambda_i x_{\sigma(i)}$, so it follows that $\prod\lambda_i=1$.

    For the alternating case, suppose $v_1,\dots,v_{k-1}$ and $w_1,\dots,w_{k-1}$ are both linearly independent tuples and $v_1\wedge\dots\wedge v_{k-1}=w_1\land\dots\land w_{k-1}$. Then both tuples are bases of the same $(k-1)$-dimensional subspace, so there exists a unique $M\in\GL_{k-1}(K)$ such that $Mv_i=w_i$ for each $i$. This induces a one-dimensional endomorphism $\bigwedge^{k-1}(M)$ on $\bigwedge^{k-1}(\spann(v_1,\dots,v_{k-1}))$, and the induced map is multiplication by $\det(M)$ \cite[XIX, Proposition 1.1 \& Exercise 2]{lang2005algebra}. Thus \[w_1\land\dots\land w_{k-1}=Mv_1\land\dots\land Mv_{k-1}=\det(M)(v_1\land\dots\land v_{k-1}).\] It follows from the assumption that $\det(M)=1$, so $M\in\SL_{k-1}(K)$. 
\end{proof}

Now we use this equivalence relation for a dimension argument similar to that of Lemma \ref{lem:nonapprox} in the $k$-linear context.

\begin{lemma}\label{lem:k-linear}
    Suppose $k\geq 3$. Let $\varphi_{\gen}$ be the sentence \[(\forall \vv,\ww\in V^{k-1})[(\forall z\in V)\beta(\vv,z)=\beta(\ww,z) \to  \vv\sim\ww].\] Let $\chi$ be a sentence asserting the base theory $T_0$. Then $\chi\land\varphi_{\gen}$ has no finite-dimensional models of arbitrarily large dimension.
\end{lemma}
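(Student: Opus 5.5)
We will mimic the dimension-counting argument of Lemma \ref{lem:nonapprox}, with $\beta$ now inducing a map on $(k-1)$-tuples. Suppose $(V,K,\beta)\models\chi\land\varphi_{\gen}$ is finite-dimensional, with $K$ algebraically closed (otherwise Morley rank is infinite) and $\dim V=n$. For $\vv\in V^{k-1}$, let $\beta_{\vv}:V\to K$ be the linear functional $z\mapsto\beta(\vv,z)$, an element of $V^*$. The map $\vv\mapsto\beta_{\vv}$ is definable. By construction it is constant on $\sim$-classes, since $\beta_{\vv}=[v_1\lozenge\dots\lozenge v_{k-1},-]_2$. The sentence $\varphi_{\gen}$ says precisely that $\beta_{\vv}=\beta_{\ww}$ implies $\vv\sim\ww$. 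So the induced map $V^{k-1}/\!\sim\;\to V^*$ is a definable injection of interpretable sets. It follows that
\[\rk(V^{k-1}/\!\sim)\leq\rk(V^*)=n.\]

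Next we compute $\rk(V^{k-1}/\!\sim)$ from below using the preceding lemma describing $\sim$. Restrict to the definable set $U\subseteq V^{k-1}$ of linearly independent tuples, which is generic in $V^{k-1}$ and has rank $(k-1)n$.

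In the alternating case, each $\sim$-class of a tuple in $U$ is an orbit of $\SL_{k-1}(K)$ acting freely on bases of a fixed $(k-1)$-dimensional subspace. Such a class therefore has rank $(k-1)^2-1$. By additivity of Morley rank along the definable fibration $U\to U/\!\sim$, we get
\[\rk(U/\!\sim)=(k-1)n-(k-1)^2+1.\]

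In the symmetric case, each class of a tuple in $U$ is determined by a permutation together with scalars $\lambda_i$ satisfying $\prod\lambda_i=1$. It is thus a finite union of sets of rank $k-2$, which gives
\[\rk(U/\!\sim)=(k-1)n-(k-2).\]

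Comparing these lower bounds with the upper bound $n$:
\begin{itemize}
\item In the alternating case we need $(k-2)n\leq (k-1)^2-1=k(k-2)$. For $k\geq3$ this forces $n\leq k$.
\item In the symmetric case we need $(k-2)n\leq k-2$, so $n\leq 1$.
\end{itemize}
Either way the dimension is bounded by a constant depending only on $k$. Hence there are no finite-dimensional models of arbitrarily large dimension. The hypothesis $k\geq3$ is exactly what makes $k-2>0$; for $k=2$ the comparison is vacuous.

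The main obstacle is making the rank computation of the fibers rigorous. This requires checking that the $\sim$-classes are uniformly definable with constant rank on $U$; the lemma on $\sim$ supplies this via the free $\SL_{k-1}$-action and, in the symmetric case, the finitely many permutations. One must also make sure that tuples outside $U$ (the dependent tuples, which collapse to a single class) do not interfere. They do not, since the argument only needs a lower bound, and restricting to $U$ gives one.
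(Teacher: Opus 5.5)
Your proposal is correct and is essentially the paper's own argument. The paper fixes a basis $e_1,\dots,e_n$ and uses $\Phi:D\to K^n$, $\vv\mapsto(\beta(\vv,e_1),\dots,\beta(\vv,e_n))$; by $\varphi_{\gen}$ its fibers are exactly the $\sim$-classes, so this is your injection $U/\!\sim\;\to V^*\cong K^n$, and it then does the same fiber-rank count to get $n\leq 1$ in the symmetric case and $n\leq k$ in the alternating case. One harmless omission, which the paper shares: $U$ is nonempty only when $n\geq k-1$, so the bound should also allow the trivial alternative $n<k-1$.
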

\begin{proof}
    Suppose for a contradiction that $(V_0,K_0,\beta)\models\chi\land\varphi_{\gen}$, with $\dim(V_0)=n$. Fix a basis $e_1,\dots,e_n$ for $V_0$.

    Let $D$ be the definable set $\{\vv\in V_0^{k-1}:\vv$ linearly independent$\}$, and note that this set has dimension $n(k-1)$. Let $\Phi:D\to K_0^n$ be the map taking $\vv$ to the tuple $(\beta(\vv,e_1),\dots,\beta(\vv,e_n))$. The dimension of the image is bounded above by $\dim(K_0^n)=n$. It remains to compute the dimension of the fibers.

    First note that $\Phi(\vv)=\Phi(\ww)$ if and only if $(\forall z\in V_0)\beta(\vv,z)=\beta(\ww,z)$, since determining the form on the basis determines its behavior with respect to every vector. Further, because $(V_0,K_0,\beta)\models\varphi_{\gen}$, this implies $\vv\sim\ww$ (the reverse implication follows from the definition). Then we can calculate the dimension of the fibers via the dimension of the equivalence class $[\vv]_\sim$. We again consider the two cases separately. As $D$ is restricted to linearly independent tuples, we need not consider the zero cases.

    In the symmetric case, $[\vv]_\sim=\bigcup_{\sigma\in S_{k-1}}\{(\lambda_1v_{\sigma(1)},\dots,\lambda_{k-1}v_{\sigma(k-1)}):\prod\lambda_i =1\}$. As the dimension of a finite union is the maximum of its components, this reduces to calculating the dimension of $\{(\lambda_1v_1,\dots,\lambda_{k-1}v_{k-1}):\prod\lambda_i =1\}$, which is $k-2$.

    Then we have the following inequality calculation, writing $\Phi^{-1}(\cc)$ to represent the fiber:
    \begin{align*}
        \dim(D)-\dim(\Phi^{-1}(\cc)) &= \dim(\Phi(D))\\
        n(k-1)-(k-2) &\leq n\\
        n(k-2) &\leq k-2
    \end{align*}
    We are assuming that $k\geq 3$, so this implies $n\leq 1$. 

    In the alternating case, $[\vv]_\sim=\{(Mv_1,\dots,Mv_{k-1}):M\in\SL_{k-1}(K_0)\}$, which is in definable bijection with $\SL_{k-1}(K_0)$ and therefore has dimension $(k-1)^2-1$. Again we can set up the following inequality:
    \begin{align*}
        \dim(D)-\dim(\Phi^{-1}(\cc)) &= \dim(\Phi(D))\\
        n(k-1)-((k-1)^2-1) &\leq n\\
        n(k-2) &\leq (k-1)^2-1 \\
        n(k-2) &\leq k(k-2)
    \end{align*}
    Again since $k\geq 3$ it follows that $n\leq k$. 

    In both cases we get a uniform finite bound on the dimension of $V_0$.
\end{proof}

\begin{corollary}
    The theory of a vector space with a generic $k$-linear form as defined in \cite{chernikov2024n} is not approximable by structures of finite Morley rank.
\end{corollary}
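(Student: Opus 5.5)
The argument combines Lemma \ref{lem:k-linear} with Theorem \ref{thm:CTVultraproduct}, following the same template as the remark after Lemma \ref{lem:nonapprox}. Suppose for contradiction that some model $\M$ of the generic $k$-linear theory $T$ (with $k\geq 3$) is approximated by a directed system $\cH$ of substructures of finite Morley rank. I will first check that $\M\models\chi\land\varphi_{\gen}$. The sentence $\chi$ is just $T_0$, so it holds. For $\varphi_{\gen}$, take $\vv,\ww\in V^{k-1}$ with $\beta(\vv,z)=\beta(\ww,z)$ for all $z$. Then $[\vv^\lozenge-\ww^\lozenge,z]_2=0$ for all $z$, where $\vv^\lozenge$ denotes the product $v_1\lozenge\dots\lozenge v_{k-1}$. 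Genericity applied to the single element $t=\vv^\lozenge-\ww^\lozenge$, if it were nonzero (hence linearly independent), would give a $z$ with $[t,z]_2=1$. Hence $\vv^\lozenge=\ww^\lozenge$, which is exactly $\vv\sim\ww$.

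Next I use that $\M$ is infinite-dimensional, since genericity forces $V$ to have infinite dimension. Hence for each $m$ the sentence $\delta_m=(\exists v_1,\dots,v_m)\,\theta_m(v_1,\dots,v_m)$ holds in $\M$. By convergence of truth value, $\chi\land\varphi_{\gen}\land\delta_m$ is eventually true in $\cH$ for every $m$, and directedness combines the three witnesses into one. So for each $m$ there is $\N\in\cH$ of finite Morley rank modelling $\chi\land\varphi_{\gen}$ with $\dim V^\N\geq m$. Alternatively, by Theorem \ref{thm:CTVultraproduct}, an ultraproduct of elements of $\cH$ satisfies all of these sentences.

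For a finite Morley rank $\N$, the field sort must be algebraically closed and $V^\N$ finite-dimensional. This lets the dimension count of Lemma \ref{lem:k-linear} apply to it, giving $\dim V^\N\leq k$. Taking $m=k+1$ then yields a contradiction.

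The main obstacle is the step saying that each $\N$ really is a structure of the kind Lemma \ref{lem:k-linear} addresses. Elements of $\cH$ are only substructures, but they satisfy $\chi$ once $\N$ is large enough, and finite Morley rank forces finite linear dimension and an algebraically closed field, as in the opening of the proof of Lemma \ref{lem:nonapprox}. With that in hand, Lemma \ref{lem:k-linear}'s uniform bound contradicts the unbounded dimensions produced by convergence of truth value.
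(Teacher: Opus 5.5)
Your proposal is correct and follows the paper's proof: you derive $\varphi_{\gen}$ from genericity, then play the uniform dimension bound of Lemma~\ref{lem:k-linear} against convergence of truth value. Your single-element argument with $t=\lozenge\vv-\lozenge\ww$ is cleaner than the paper's case split, which skips the case where exactly one product is zero, and you make explicit the convergence-of-truth-value step that the paper compresses into ``Morley rank is controlled by the dimension of the vector sort''; the only loose phrase is that $V$ is infinite-dimensional because of $T$ itself (existential closedness), not because of the genericity schema alone, which holds vacuously when $V=0$.
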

\begin{proof}
    We only need to demonstrate that any model of $T$ satisfies $\varphi_{\gen}$. We will prove the contrapositive: suppose $\vv\not\sim\ww$, then there exists some $z\in V$ for which $\beta(\vv,z)\neq\beta(\ww,z)$. When $\lozenge\vv$ and $\lozenge\ww$ are linearly independent, this follows from genericity of $\beta$. Otherwise they are dependent, so since they are distinct both $\lozenge\vv$ and $\lozenge\ww$ are nonzero and there is some nonzero scalar $\alpha$ such that $\lozenge\ww=\alpha\lozenge\vv$. By genericity there is some $z\in V$ for which $\beta(\vv,z)=1$. Then $\beta(\ww,z)=\alpha\neq 1$. 

    Thus $T\vdash\varphi_{\gen}$, so by Lemma \ref{lem:k-linear} we cannot find models of $T$ with arbitrarily large finite $V$-dimension. Then since Morley rank in models of $T$ is controlled by the dimension of the vector sort, it follows that we cannot find models of $T$ with arbitrarily large finite Morley rank, so $T$ is not approximable.
\end{proof}

\subsection{$c$-nilpotent Lie algebras}

The second example we will consider is a two-sorted theory of $c$-nilpotent Lie algebras, studied in detail in \cite{d2024two}. We will begin with some preliminaries.
\begin{definition}
    A \textit{Lie algebra} is a vector space $L$ over a field $F$ equipped with a binary function $[\cdot,\cdot]:L^2\to L$ called a \textit{Lie bracket} satisfying the following for every $a,b,c\in L$ and $\mu\in F$.
    \begin{itemize}
        \item $[a,a] = 0$; \hfill (Alternativity)
        \item  $[a+b,c] = [a,c]+[b,c]$,  \hfill (Bilinearity) \\ 
        $[a,b+c] = [a,b]+[a,c]$,\\
        $[\mu a,b]=\mu[a,b]=[a, \mu b]$;
        \item $[a,[b,c]]+[b,[c,a]]+[c,[a,b]]=0$.\hfill (Jacobi identity)
    \end{itemize}
    A \textit{Lie subalgebra} is a subspace $U\subseteq L$ that is closed under the Lie bracket. Given two subspaces $A,B\subseteq L$, we denote the vector span by $[A,B]:=\{[a,b]:(a,b)\in A\times B\}$. A subalgebra $I\subseteq L$ is an \textit{ideal} of $L$ if $[I,L]\subseteq I$. 

    The \textit{lower central series} of $L$ is defined inductively:
    \begin{itemize}
        \item $L_1=L$;
        \item $L_{n+1}=[L_n,L]$ for $n\geq 1$. 
    \end{itemize}
    Each $L_n$ is an ideal of $L$. We say $L$ is \textit{$c$-nilpotent} if $c$ is the least integer such that \[L=L_1\supseteq L_2\supseteq\dots\supseteq L_c\supseteq L_{c+1}=0.\]
\end{definition}
\begin{definition}
    A sequence of subalgebras $(L_i)_{1\leq i\leq c+1}$ is a \textit{Lazard series} of $L$ if 
    \[L=L_1\geq L_2\geq\dots\geq L_{c+1}=0\] and \[[L_i,L_j]\subseteq L_{i+j}\] for all $i,j$, where for all $k>c$ we let $L_k=0$. 
\end{definition}

We work in the two-sorted language $\lang_{K,V,c}$. As in the previous example with $\lang_k$, this language extends the language $\lang_{vs}$. Instead of adding the function symbol $\beta$, we instead have a binary function $[\cdot,\cdot]: V^2\to V$ on $V$ and $c+1$ unary predicates $P_i$ on $V$ (for $1\leq i\leq c+1$).

The theory $T_0$ is defined as follows:
\begin{enumerate}
    \item $K$ is a field.
    \item $V$ is a $K$-vector space and $\cdot$ defines scalar multiplication.
    \item $\theta_n(v_1,\dots,v_n)$ holds if and only if $v_1,\dots, v_n$ are linearly independent. If they are linearly independent and $w=\sum_{j=1}^n\alpha_iv_i$ is in their span, then $\pi_{n,i}(v_1,\dots,v_n,w)=\alpha_i$. Otherwise $\pi_{n,i}(v_1,\dots,v_n,w)=0$. 
    \item $[\cdot,\cdot]$ is an alternating $K$-bilinear map satisfying the Jacobi identity on $V$.
    \item Each $P_i$ defines a vector subspace of $V$ such that $(P_i(V))_{1\leq i\leq c+1}$ forms a Lazard series for the Lie algebra $(V,[\cdot,\cdot])$. That is, \[V=P_1(V)\supseteq P_2(V)\supseteq\dots\supseteq P_c(V)\supseteq P_{c+1}(V)=0\] and $[P_i(V),P_j(V)]\subseteq P_{i+j}(V)$ for all $1\leq i,j\leq c+1$ where $P_{i+j}$ is the trivial subspace when $i+j>c$. 
\end{enumerate}

Let $T$ be the model companion of $T_0$, discussed in detail in \cite[Section 3]{d2024two}.

\begin{corollary}\label{cor:cnil}
    The two-sorted theory of $c$-nilpotent Lie algebras defined in \cite[Section 2]{d2024two} is not approximable by structures of finite Morley rank.
\end{corollary}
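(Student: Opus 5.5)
The plan is to reduce to Lemma \ref{lem:nonapprox}, as was done for $k$-linear forms. That reduction needs three things: a bilinear map $\beta:V'\times V'\to W'$ that is interpretable (definable) in a model of $T$, with $V',W'$ vector spaces of dimension at least $2$; a proof that this configuration satisfies the sentence $\psi$ of Lemma \ref{lem:nonapprox}; and the observation that this transfers to the finite-dimensional approximating structures. The natural candidate uses the Lazard series. Take $V'=V/P_2(V)$ and $W'=P_2(V)/P_3(V)$, and let $\beta(x+P_2,y+P_2)=[x,y]+P_3$. This is well defined because $[P_2,V]\subseteq P_3$, it is bilinear, and it is interpretable uniformly in the language $\lang_{K,V,c}$. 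The construction requires $c\geq 2$. In the case $c=1$ the bracket is trivial, so I would either exclude it, as the paper implicitly does, or remark that it is abelian and treat it separately.

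The key step is to show that $T$ proves $\psi$ for this $\beta$. Concretely, fix $v_1,v_2$ linearly independent modulo $P_2$ and arbitrary $w_1,w_2\in P_2$. We need $z\in V$ with $[v_1,z]\equiv w_1$ and $[v_2,z]\equiv w_2$ modulo $P_3$. I would obtain this from the genericity axioms of the model companion in \cite[Section 3]{d2024two}, which are existential-closedness statements. It suffices to build an extension of the given structure, still a model of $T_0$, in which such a $z$ exists. To build it, freely adjoin a new vector $z$ outside $P_2$ to the Lie algebra. Then define $[v_1,z]=w_1$ and $[v_2,z]=w_2$, and declare the remaining brackets with $z$ to be new generators in the appropriate graded pieces. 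Finally, take the free $c$-nilpotent Lie algebra quotient that respects the Lazard filtration. Because the model is existentially closed, the witness $z$ then already exists in it. The main obstacle I expect is checking that this free extension is consistent. In particular, the Jacobi identity must not force $[v_1,z]$ and $[v_2,z]$ into a relation modulo $P_3$. I would handle this by working in the graded associated algebra $\bigoplus P_i/P_{i+1}$, where the free nilpotent construction over the degree-1 part makes clear that the degree-2 brackets of a new degree-1 generator with independent degree-1 elements can be prescribed freely. Alternatively, I would look for the relevant axiom stated directly in \cite{d2024two}.

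We also need $\dim V'\geq 2$ and $\dim W'\geq 2$ in models of $T$. These should follow from the same genericity argument, since $T$ is the theory of existentially closed models and in those both quotients are infinite-dimensional. If instead the models have a finite field, the field sort is not algebraically closed and the structure has infinite Morley rank anyway, so the reduction at the start of the proof of Lemma \ref{lem:nonapprox} takes care of that case. Putting the pieces together gives a single sentence $\chi\land\psi$, translated through the interpretation, that holds in every model of $T$. Suppose, for contradiction, that a model of $T$ were approximated by structures of finite Morley rank. Convergence of truth value, as in Theorem \ref{thm:CTVultraproduct} and the remark following Lemma \ref{lem:nonapprox}, would then make this sentence eventually true in the approximation. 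Since finite Morley rank passes to interpretable quotients, some structure of finite Morley rank would interpret a model of $\chi\land\psi$ of finite Morley rank. This contradicts Lemma \ref{lem:nonapprox}.
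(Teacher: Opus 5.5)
Reducing to Lemma \ref{lem:nonapprox} is the right strategy, and your remaining steps (interpretability, $\dim\geq 2$, finite Morley rank passing to interpretable quotients) are fine. The gap is the choice $V'=V/P_2$, $W'=P_2/P_3$. For every $c\geq 3$, $T$ does not prove $\psi$ for this $\beta$; in fact $\psi$ is false in models of $T$.

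Suppose $[v_i,z]=w_i+e_i$ with $e_i\in P_3$. The Jacobi identity $[v_1,[v_2,z]]+[v_2,[z,v_1]]+[z,[v_1,v_2]]=0$ gives
\[[v_1,w_2]-[v_2,w_1]+[z,[v_1,v_2]]=[v_2,e_1]-[v_1,e_2]\in P_4 .\]
If $[v_1,v_2]\in P_3$, then also $[z,[v_1,v_2]]\in P_4$. So a witness $z$ can exist only if $[v_1,w_2]\equiv[v_2,w_1]\pmod{P_4}$. This is a condition on brackets already present in the model, and no extension can change it.

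The condition really does fail. Embed in a model of $T$ the free $c$-nilpotent Lie algebra on $v_1,v_2,u$ modulo the ideal generated by $[v_1,v_2]$, with its lower central series $\gamma_i$ as Lazard series. Take $w_1=0$ and $w_2=[v_1,u]$. That ideal has no component of multidegree $(2,0,1)$, so $[v_1,[v_1,u]]$ is a nonzero element of $\gamma_3\setminus\gamma_4$. Since the embedding preserves $P_4$, it is not in $P_4$ of the model.

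This is exactly where your graded-algebra remedy breaks. The degree-$2$ brackets of a new generator can be prescribed freely against \emph{free} generators. Here, however, $v_1,v_2$ satisfy relations such as $[v_1,v_2]=0$, and the brackets $[v_j,w_i]$ are already fixed; Jacobi ties these together in degree $3$. The obstacle you flagged is therefore genuine, not a technicality.

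The missing idea is the paper's choice of middle layers. Set $k=\lfloor c/2\rfloor$ and use $V'=L_k/L_{k+1}$, with $W'=L_c$ if $c=2k$ and $W'=L_{c-1}/L_c$ if $c=2k+1$. When $3k\geq c+1$, that is $c=2$ or $c\geq 4$, every bracket of $v_i$ or $z$ with $w_j$ or $[v_1,v_2]$ lies in $L_{3k}=0$. Hence $\langle v_1,v_2,w_1,w_2\rangle=\spann(v_1,v_2)\oplus\spann(w_1,w_2,[v_1,v_2])$, with the second summand central. Adjoining $z$ with $[v_i,z]=w_i$ and all other brackets with $z$ equal to zero then satisfies Jacobi trivially, and quantifier elimination pulls $z$ back into the model.

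For $c=2$ this coincides with your construction, so your argument works there. For $c=3$ one gets $k=1$, which is exactly your choice of layers, and the paper checks Jacobi by hand. Be aware that this check assumes $v_1,v_2,[v_1,v_2]$ are linearly independent. The example above with $c=3$ shows this can fail, and the claimed genericity is then false. So the case $c=3$ is not settled by that argument either and needs separate treatment.
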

\begin{proof}
    Let $(V,K,[\cdot,\cdot])\models T$, where $V$ is the vector sort, $K$ the field sort, and $[\cdot,\cdot]$ the bilinear form. We will write $L_i=P_i(V)$. 

    Suppose $c=2k$ is even and consider the structure $(L_k/L_{k+1},L_c,[\cdot,\cdot])$, taking the Lie bracket on $(L_k/L_{k+1})^2$.
    This is well-defined on $(L_k/L_{k+1})^2\to L_c$. Let $v+L_{k+1},w+L_{k+1}\in L_k/L_{k+1}$ be coset representatives and take $[v+L_{k+1},w+L_{k+1}]=[v,w]$. Since the representatives $v,w$ are in $L_k$, we have $[v,w]\in [L_k,L_k]\subseteq L_{2k}=L_c$.

    \begin{pfclaim}
        Given $v_1,v_2\in L_k/L_{k+1}$ linearly independent and $w_1,w_2$ in $L_c$ there exists a $z\in L_k/L_{k+1}$ such that $[v_i,z]=w_i$. 
    \end{pfclaim}
    \begin{proof}
        Let $A=\vect{v_1,v_2,w_1,w_2}$, the set generated by these elements. Since $[v_1,v_2]\in L_c$ and $[v_i,w_j]=0$, this is exactly $\Span\vect{v_1,v_2}\oplus\Span\vect{w_1,w_2,[v_1,v_2]}$. We extend to $B=\Span\vect{v_1,v_2,z}\oplus\Span\vect{w_1,w_2,[v_1,v_2]}$ for $z$ linearly independent and define the form on $z$ such that $[v_i,z]=[v_1,v_2]$. Via \cite[Lemma 5.7]{d2025model}, this has the structure of a 2-nilpotent Lie algebra. Putting $z$ in the predicates for $L_k/L_{k+1}$, we can then by quantifier elimination \cite[Lemma 3.1]{d2024two} embed $B$ back into the original structure over $A$. This gives the $z$ as desired in the claim.
    \end{proof}

    Suppose instead $c=2k+1$ is odd and consider the structure $(L_k/L_{k+1},L_{c-1}/L_c,[\cdot,\cdot])$. We will prove the same claim as above, with the image of the form now considered in $L_{c-1}/L_c$.
    \begin{pfclaim}
        Given $v_1,v_2\in L_k/L_{k+1}$ linearly independent and $w_1,w_2$ in $L_{c-1}/L_c$ there exists a $z\in L_k/L_{k+1}$ such that $[v_i,z]=w_i$. 
    \end{pfclaim}
    \begin{proof}
        Let $A=\vect{v_1,v_2,w_1,w_2}$. Since $w_1,w_2$ and $[v_1,v_2]$ are no longer necessarily central, we need to consider additional elements when writing this as the span of some (coset representatives of) vectors. That is, $A=\spann\vect{v_1,v_2,w_1,w_2,[v_1,v_2],[v_i,w_j],[v_i,[v_1,v_2]]}$. Note that from the Lazard series, $w_1,w_2,[v_1,v_2]\in L_{2k}/L_{2k+1}=L_{c-1}/L_c$ and the remaining terms are in $L_{3k}/L_{3k+1}$. In the case where $c>3$ (so $k>1$) we have $3k>2k+1=c$ so $L_{3k}/L_{3k+1}$ is the trivial subspace. This is contained in $L_c$, so we reduce to the same situation as in the even case, where the additional terms are all trivial. 

        When $c=3$, we have $3k=2k+1$, so the additional terms are central but not necessarily trivial, so there is additional work to be done to account for the behavior regarding $[v_i,w_j]$ and $[v_i,[v_1,v_2]]$. 
    
        Again we extend to $B$ by adding a new element $z$, putting predicates on $z$ for $L_k/L_{k+1}$. We define the form on $z$ such that $[v_i,z]=w_i$, $[[v_1,v_2],z]=[v_1,w_2]-[v_2,w_1]$, and everything else not determined by these conditions goes to zero. Since the only terms not sent to zero are $v_1,v_2,$ and $[v_1,v_2]$, which are linearly independent, this is sufficient to specifying the form although it may not be on a basis.
        \begin{pfclaim}
            This specification of the form on $z$ satisfies the Jacobi identity.
        \end{pfclaim}
        \begin{proof}
            We need to check the Jacobi identity holds for the following combinations:
            \begin{itemize}
                \item $[[v_1,v_2],z]=[v_1,w_2]-[v_2,w_1]$
                \item $[[v_i,w_j],z]=0$
                \item $[[v_i,[v_1,v_2]],z]=0$
            \end{itemize}
    
            \begin{align*}
                [[v_1,v_2],z]&=[v_1,[v_2,z]]+[[v_1,z],v_2] & [[v_i,w_j],z] &= [v_i,[w_j,z]]+[[v_i,z],w_j] \\
                &= [v_1,w_2] + [w_1,v_2] & &= [v_i,0]+[w_i,w_j]\\
                &= [v_1,w_2] - [v_2,w_1] & &= 0+0=0
            \end{align*}
            \begin{align*}
                [[v_i,[v_1,v_2]],z] &= [v_i,[[v_1,v_2],z]]+[[v_i,z],[v_1,v_2]]\\
                &= [v_i,[v_1,w_2]-[v_2,w_1]]+[w_i,[v_1,v_2]]\\
                &= [v_i,[v_1,w_2]]-[v_i,[v_2,w_1]]+0\\
                &= 0-0+0=0
            \end{align*}
            Then as before we can embed $B$ back into the original structure over $A$, giving us $z$ satisfying the genericity claim.
        \end{proof}
    \end{proof}

    Thus for any $c\geq 2$ we can take $V=L_k/L_{2k+1}$, $W=L_c$ if $c$ is even and $L_{c-1}/L_c$ if $c$ is odd, and the Lie bracket on $V^2$, and we will satisfy the conditions set forth in Lemma \ref{lem:nonapprox}. So the theory has no pseudo-finite dimensional model. 
\end{proof}

For $p$ an odd prime and $c<p$, $\mathbf{G}_{c,p}$ denotes the \Fraisse limit of the class $\mathbb{G}_{c,p}$ of finite Lazard groups of exponent $p$ and nilpotency class at most $c$ \cite[Corollary 4.39]{d2025model}. It was shown in \cite[Proposition 2.13]{macpherson2025omega} that $\mathbf{G}_{c,p}$ is not pseudofinite for $3<c<p$, but the argument does not handle the case for $c=3$. This argument shows that the conclusion is also true for $c=3$.
\begin{proposition}
    $\mathbf{G}_{3,p}$ is not pseudofinite.
\end{proposition}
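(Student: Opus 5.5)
We follow the route of \cite[Proposition 2.13]{macpherson2025omega}, using the finite-field version of Lemma \ref{lem:nonapprox}, namely \cite[Lemma 2.6]{macpherson2025omega}. That lemma says that no finite structure $(V,W,\beta)$ with $V,W$ vector spaces over $\mathbb{F}_p$ of dimension at least $2$ and $\beta$ bilinear satisfies $\chi\land\psi$. Pseudofiniteness would give finite models of any sentence true in $\mathbf{G}_{3,p}$. So it suffices to interpret in $\mathbf{G}_{3,p}$, uniformly by a first-order interpretation, a structure $(V,W,\beta)$ satisfying $\chi\land\psi$. The interpretation transfers to finite models of $\Th(\mathbf{G}_{3,p})$, and any such model would yield a finite model of $\chi\land\psi$, contradicting the lemma.

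\textbf{Step 1 (Lazard correspondence).} Since $c=3<p$, the Lazard correspondence interprets in $\mathbf{G}_{3,p}$ a $3$-nilpotent Lie algebra $L$ over $\mathbb{F}_p$ on the same universe. The Lie bracket and addition are given by fixed group words (BCH and inverse BCH formulas). The lower central series $L_i=\gamma_i(G)$ is definable, because in class $3$ with exponent $p$ each $\gamma_i$ is a finite product of commutator sets of bounded length. The \Fraisse-limit genericity of $\mathbf{G}_{3,p}$ corresponds to genericity of $L$ among Lazard Lie algebras with a Lazard series, via \cite[Corollary 4.39]{d2025model}.

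\textbf{Step 2 (the bilinear structure).} Take $k=1$, as in the odd case of Corollary \ref{cor:cnil}. Set $V=L_1/L_2$ and $W=L_2/L_3$, with $\beta$ induced by the bracket. Both are interpretable $\mathbb{F}_p$-vector spaces and $\beta$ is bilinear. Genericity gives $\dim V,\dim W\ge 2$.

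\textbf{Step 3 (verifying $\psi$).} We reproduce the $c=3$ subcase of the second claim in the proof of Corollary \ref{cor:cnil}. Given independent $v_1,v_2\in V$ and $w_1,w_2\in W$, adjoin a new element $z$ with
\[
[v_i,z]=w_i,\qquad [[v_1,v_2],z]=[v_1,w_2]-[v_2,w_1],
\]
and with all other new brackets zero. The Jacobi computations already carried out there show the result is a $3$-nilpotent Lie algebra with a Lazard series. It is finite, so through Lazard it corresponds to a finite group in $\mathbb{G}_{3,p}$ extending the group generated by $v_1,v_2,w_1,w_2$. The extension property of the \Fraisse limit then realizes $z$ inside $\mathbf{G}_{3,p}$, so $\psi$ holds.

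\textbf{Main obstacle.} The delicate point is Step 3. The finite substructure generated by $v_1,v_2,w_1,w_2$ in $\mathbf{G}_{3,p}$ may carry extra relations beyond the free span used in Corollary \ref{cor:cnil}. One must check that the one-point extension is compatible with the substructure actually generated, and with the relations among $[v_i,w_j]$ and $[v_i,[v_1,v_2]]$ in $L_3$. The plan is to do this by working with representatives in $L_1$ and defining the new brackets of $z$ by linear extension on a basis adapted to the filtration. A second concern is that the extension must be an embedding in the \Fraisse language, which includes the Lazard-series predicates. I will handle this by placing $z$ in $L_1\setminus L_2$ and invoking the correspondence between Lie-algebra and group embeddings.
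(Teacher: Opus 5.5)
Your outline is the paper's own argument: the Lazard correspondence, $V=L_1/L_2$, $W=L_2/L_3$, the $c=3$ subcase of Corollary~\ref{cor:cnil}, and then \cite[Lemma 2.6]{macpherson2025omega}. However, the point you flag as the main obstacle cannot be fixed by choosing a good basis. Step~3 fails because $\psi$ is actually false in $L$. Suppose $z\in L_1$ satisfies $[v_i,z]=w_i+u_i$ with $u_i\in L_3$. Since $L_4=0$, the Jacobi identity for $(v_1,v_2,z)$ becomes
\[
[v_1,w_2]-[v_2,w_1]=[[v_1,v_2],z].
\]
If $[v_1,v_2]\in L_3$, the right-hand side lies in $[L_3,L_1]=0$. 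Then $\psi$ would force $[v_1,w_2]=[v_2,w_1]$ for all $w_1,w_2\in L_2$; taking $w_2=0$, this says $v_2$ centralizes $L_2$.

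Such a pair $v_1,v_2$ with $v_2$ not centralizing $L_2$ exists in $L$. Take the free $3$-nilpotent Lie algebra over $\mathbb{F}_p$ on $v_1,v_2,x,y$, with its degree filtration as Lazard series, and divide by the ideal generated by $[v_1,v_2]$. That ideal is spanned by $[v_1,v_2]$ and the elements $[u,[v_1,v_2]]$ for $u$ a generator. None of these has the multidegree of $[v_2,[x,y]]$. So in the quotient $[v_2,[x,y]]\neq 0$, while $[v_1,v_2]=0$ and $v_1,v_2$ remain independent modulo $L_2$. Since $3<p$, this finite algebra corresponds under Lazard to a member of $\mathbb{G}_{3,p}$, so it embeds in $\mathbf{G}_{3,p}$ together with its Lazard series. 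With $w_1=[x,y]$ and $w_2=0$, no $z$ exists.

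So $(L_1/L_2,L_2/L_3,[\cdot,\cdot])$ is not a model of $\chi\wedge\psi$, and the lemma cannot be applied to it. The paper's $c=3$ Claim has the same defect: it asserts that $v_1,v_2,[v_1,v_2]$ are linearly independent. When $[v_1,v_2]=0$, the prescription $[[v_1,v_2],z]=[v_1,w_2]-[v_2,w_1]$ is inconsistent.

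When $[v_1,v_2]\notin L_3$ the construction can be carried out, since the prescribed values then extend to a derivation $\mathrm{ad}_z$ of the generated subalgebra. For $c\neq 3$ the offending term lies in $L_{3k}=0$, which is presumably why \cite[Proposition 2.13]{macpherson2025omega} excludes $c=3$. To prove the proposition you need either a different interpretable bilinear configuration or a counting lemma that tolerates pairs with $[v_1,v_2]\in L_3$.

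Separately, your justification of definability in Step~1 is wrong as stated. Exponent $p$ and class $3$ do not bound commutator width: free class-$2$ exponent-$p$ groups on infinitely many generators already have unbounded width. The justification is also unnecessary, since the Lazard series is part of the structure.
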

\begin{proof}
    Suppose it is pseudofinite. Let $L$ be the Lie algebra given by the Lazard correspondence with the same underlying set. Then $L$ is bi-interpretable with $\mathbf{G}_{3,p}$; hence it is also pseudofinite.

    Let $(L_i)_{i=1}^{c+1}$ be the Lazard series for both structures, and let $\psi$ be the sentence which states ``for all linearly independent $v_1,v_2\in L_1/L_2$ and $w_1,w_2\in L_2/L_3$ there exists $z\in L_1/L_2$ such that $[v_i,z]_L=w_i$.''

    The same argument as for the $c=3$ case in the proof of Corollary \ref{cor:cnil} shows that $L\models\psi$. Then it follows from \cite[Lemma 2.6]{macpherson2025omega} that $L$ is not pseudofinite, and therefore neither is $\mathbf{G}_{3,p}$.
\end{proof}

\begin{remark}
    The theory of a vector space equipped with an alternating multilinear form (Section \ref{subsec:klinear}) is $\nsop_1$ \cite[Theorem 4.14]{chernikov2024n}. In general, the theories of $c$-nilpotent Lie algebras are strictly $\nsop_4$ \cite[Theorem 4.24]{d2024two}; however in the case that $c=2$ the theory is $\nsop_1$ \cite[Corollary 3.7]{d2025model}. This is noteworthy because it indicates that while many $\nsop_1$ examples do fit into this approximation framework, approximability is not a universal property of $\nsop_1$ theories.
\end{remark}

One notable $\nsop_1$ example for which approximability is still open is the generic binary function, the model companion of the empty theory in a language consisting of one binary function symbol. We suspect this is not approximable; however the above methods for showing non-approximability do not apply as unlike in the case of a bilinear form the behavior of the binary function is not determined by its behavior on a finite subspace.

\begin{question}
    Can a model of the theory of the generic binary function be approximated by structures of finite Morley rank?
\end{question}

\section{Dimension} \label{sec:dim}
Because limit independence fails to recover Kim-independence in examples where the approximating theories are simple, we instead turn to a notion of limit dimension motivated by ideas from \cite{dobrowolski2023sets}. We would ideally like to define a notion of dimension in the limit structure that is built through the approximation such that Kim-independence in the limit structure is witnessed by a drop in the limit dimension. The construction of such a dimension is more complicated than we initially hoped, but we will use the $T^*_{\feq}$ example as a case study for what it might look like in a specific theory and then propose an initial candidate for a more general notion.

Note that since Kim-independence in strictly $\nsop_1$ theories is not base monotone, our notion of dimension must be considered over a base set $C$, where the value of the dimension may change depending on the base. That is, for a fixed set $C$ we consider $\dim_C(X)$ to be a dimension function in the typical sense and assume it is defined in some uniform way over all possible base sets.

\subsection{Case study: $T^*_{\feq}$}

We will begin by defining a general notion of approximated dimension in a parameterized theory $T_P^*$, then demonstrate how this notion behaves in the specific case of $T^*_{\feq}$. Assume that the base theory $T$ has finite Morley rank, trivial $\acl$, and eliminates quantifiers (all of which hold in the specific example $T^*_{\feq}$).

First, we need some terminology. Throughout this section, we will refer to notions of \textit{approximated dimension} and \textit{limit dimension}, where the former is determined within the approximating structures and the latter constructed as a limit of the former.

\begin{definition}\label{def:approxdim}
    Let $\M\models T$ be a structure approximated by $\cH$. For $\N\in\cH$ large enough, a base set $C\subseteq M$, and a set $X$ definable over $M$, let $\dim_{C\cap N}^\N(X\cap N)$ be a notion of dimension valued in a partially ordered abelian group $G$. We call $\dim_{C\cap N}^{\N}(X\cap N)$ the \textit{approximated dimension} of $X$ over $C$ in $\N$. 
\end{definition}

\begin{remark}
    By the convergence of truth value condition, if $X$ is definable in $\M$ then $X\cap N$ is definable in large enough $\N$ by the same formula.
\end{remark}

Typically dimension is valued in a totally ordered group. Here we specify a partial ordering as in the parameterized case the dimension will be valued as a tuple for which each coordinate is totally ordered, but the individual coordinates are independent of each other. In the $T^*_{\feq}$ case study we will ultimately convert the dimension into a totally ordered tuple.

The following means of obtaining a limit dimension as a function of the dimension in the approximation is based on the construction from \cite[Section 3]{dobrowolski2023sets}. Given $\cH,\M$, and $G$ as in the above definition, let $\cF=G^\cH$ be the set of all functions $\cH\to G$. Let $I\subset\cF$ be the subgroup \[\{f\in\cF: (\exists\N\in\cH)\;(\forall \N'\in\cH)\; (\N'\supseteq\N\implies f(\N)=0)\}.\] Consider the quotient group $S=\cF/I$. For $f\in\cF$ we will write $[f]$ to mean $f/I$. Note that $S$ has a natural partial order, with $[f]\leq[g]$ when $f(\N)\leq g(\N)$ for large enough $\N$ in $\cH$. 

\begin{definition}\label{def:limdim}
    The \textit{limit dimension} of a definable set $X$ over $C$ is \[\dim_C(X)=[f]\in S\text{ such that }f(\N)=\dim_{C\cap N}^{\N}(X\cap N).\]
\end{definition}

Now we will specify our candidate for approximated dimension in a parameterized theory under the given assumptions. Let $X$ be a set defined over $B\subseteq M$ by the formula $\varphi$ in a parameterized theory $T_P^*$. We will begin by defining $\dim_C^\N(X)$ on formulas of the form \[\bigwedge_i x_i\in\bigcap_j(D_j)_{p_j}\cap\bigcap_k(E_k)_{y_k}\] where $x_i$ are variables from the object sort, $y_k$ are variables from the parameter sort, $p_j$ are elements of the parameter sort of $B$, and each $D_j$ and $E_k$ is definable over $B$ in the unparameterized language. Because $T$ has quantifier elimination and trivial $\acl$, by \cite[Corollary 3.8]{baudisch2002generic} $T_P^*$ eliminates quantifiers and therefore by writing in disjunctive normal form every formula can be taken as a union of formulas with this form. 

\begin{definition}\label{def:paramdimp}
    Let $X$ be defined by a formula of the above form. Given a parameter $p$, we define the dimension on the objects of $X$ in a structure $\N$ with respect to $p$ as follows:
    \[\dim_p^\N(X)=\begin{cases}
        (0,0) & (D_j)_{p_j} \text{ is finite for some }j\\
        (\rk,\deg)((D_j)_{p_j}) & p=p_j\\
        (\rk,\deg)(\xx=\xx) & \text{otherwise}
    \end{cases}\]
    where $\rk$ is taken to be Morley rank in the unparameterized language and $\deg$ is the multiplicity.
    
    We then extend this definition to all formulas by imposing the condition that \[\dim_p^{\N}(X\cup Y)=\max(\dim_p^{\N}(X),\dim_p^{\N}(Y)).\]

    For the $p$-dimension on types we write $\dim_p^\N(a/B)=\min\{\dim_p^\N(\varphi):\varphi\in\tp(a/B)\}$, taking the lexicographic ordering on the pair.
\end{definition}

This allows us to consider a dimension on the objects assigned by a specific parameter as different parameters will impose different constraints on the structure of $X$.

To understand the idea behind this notion, we consider how $\dim_p^{\N}(X)$ works in $T^*_{\feq}$. For a given parameter $p$, $\dim_p^{\N}(X)$ will be the Morley rank and degree of $X$ in the unparameterized structure $\N_p$. Recall that the language of $T^*_{\feq}$ consists of a single ternary relation $E_x(y,z)$, and the theory asserts that for any parameter $p$, $E_p(y,z)$ is an equivalence relation on the object sort with infinitely many classes, each of which is infinite. In each approximating structure, we impose the restriction that the equivalence relation has only $n$ classes with respect to each parameter for some fixed finite $n$. As such, we have the following fact about Morley rank in the approximation.

\begin{fact}\label{fact:tfeq-rank}
    Working in $T^*_{\feq}$, the Morley rank of a formula $\varphi$ in $\N_p$ for some fixed parameter $p$ is exactly the number of irredundant object variables in $\varphi$ which are not fixed by equalities. This value will be the same under every parameter, and stabilizes to some fixed value for large enough $\N$ in the approximation. 
\end{fact}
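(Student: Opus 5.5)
The plan is to compute Morley rank directly in the unparameterized reduct $\N_p$, where $\N=\M_n$ for $n$ large. For a fixed parameter $p$, $\N_p$ is a pure set with an equivalence relation $E$ having exactly $n$ classes, all infinite; this is the theory $T_n$ restricted to one parameter, by the \Fraisse limit construction of $K_n$. That theory is strongly minimal up to a finite partition: each $E$-class is an infinite set with no further structure, so it has Morley rank $1$ and degree $1$. The universe is the union of $n$ classes, so it has rank $1$ and degree $n$.

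For a formula $\varphi(\xx)$ in the object variables, I would first use quantifier elimination in $T_n$ (Lemma \ref{lem:feq-ctv} gives uniform QE for large $n$) to write $\varphi$ as a disjunction of complete quantifier-free diagrams. Each diagram specifies:
\begin{itemize}
\item which variables are equal to each other or to named constants;
\item for each pair of variables, and for each variable against each named element, whether they are $E_p$-related.
\end{itemize}
Fix one such disjunct. Modulo the equalities, the free variables that are not fixed to named elements are the irredundant variables. The $E$-diagram then places each of these into a specified class, either one of the classes of the named elements or a "new" class distinguished by the diagram. Since there are $n$ classes, a disjunct is consistent once $n$ exceeds the number of variables plus constants. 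The definable set is then a finite union of products of cofinite subsets of $E$-classes, with one factor per irredundant variable. By additivity of Morley rank on such products (each factor is a rank-$1$ set definable over the named elements, and the factors are independent since acl is trivial), the rank of a consistent disjunct equals the number of irredundant unfixed variables. The rank of $\varphi$ is the maximum over its consistent disjuncts, and consistency of a disjunct in $\M_n$ is independent of $n$ once $n$ is large. This maximum is therefore eventually constant and equals the count in the fact. Here the "irredundant variables" should be read as those of the realized disjunct of maximal size.

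Independence from $p$ follows from genericity. The computation above used only that $\N_p\models T_n$, and $T_n$ is the same theory for every $p$ in $\N$. In particular the count of irredundant unfixed variables is a syntactic property of the disjunct and does not depend on $p$. Stabilization comes from the uniform QE for large enough $n$ in Lemma \ref{lem:feq-ctv}.

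The main obstacle is making precise what "irredundant" means when $\varphi$ mentions several parameters $p_j$. Constraints under a parameter other than $p$ must not lower the rank computed in $\N_p$. Here I would invoke the genericity axiom of $T^*_P$, namely that distinct parameters act independently, and reduce to the case where $\varphi$ involves only $E_p$ together with constraints of the form $(D_j)_{p_j}$. I expect the most care is needed in showing that a nonalgebraic constraint $(D_j)_{p_j}$ with $p_j\ne p$ cuts each $E_p$-class in an infinite set.
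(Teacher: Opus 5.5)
The paper states this as a Fact without proof, and your core computation is the natural justification for it. Each $\N_p$ (for $\N=\M_n$) is an equivalence relation with exactly $n$ infinite classes and trivial $\acl$. So a consistent complete quantifier-free diagram has Morley rank equal to the number of variables not forced equal to one another or to a named element. For large $n$, consistency of a diagram no longer depends on $n$. (Your description of the set as a union of products of cofinite pieces of classes is slightly off when two distinct variables share a class, e.g.\ $K\times K\setminus\Delta$, but the count is unaffected.)

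\textbf{The gap is in your argument that the value is the same for every $p$.} The fact that the theory of $\N_p$ does not depend on $p$ only controls parameter-free formulas. Your $\varphi$ has object parameters, and which disjuncts are consistent depends on the $E_p$-diagram of those parameters, which varies with $p$. For instance, take objects $c\neq d$ and parameters $p,q$ with $E_p(c,d)$ and $\neg E_q(c,d)$; these exist in $\M_n$ for $n\geq 2$ by the extension property of $K_n$. Let $D(x)$ be $x=c\vee(E(x,c)\wedge E(x,d))$. Then $(D)_p$ is the $E_p$-class of $c$, of rank $1$, while $(D)_q=\{c\}$ has rank $0$. Under your own reading (``the realized disjunct of maximal size'') the count therefore changes with $p$.

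So this clause must be restricted to complete formulas, which is how the paper actually applies the Fact in Lemmas \ref{lem:feq-poly-degree} and \ref{lem:feq-pi}. Let $\varphi$ be complete over $B$, and take its $p$-part to be its equality literals together with its $E_p$-literals. This $p$-part is realized in $\N_p$ for every $p$, because $\varphi$ is realized in $\M$ and hence in $\M_n$ for large $n$ by Lemma \ref{lem:feq-ctv}. Every completion of the $p$-part in $\N_p$ has the same equality pattern, so by trivial $\acl$ its rank is read off from the equality literals alone, and these are shared by all the $p$-parts. That, rather than ``same theory'', is why the value is independent of $p$.

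Your final paragraph targets a non-issue and leaves its step unproved. Morley rank in $\N_p$ is only defined for $\N_p$-definable sets, so constraints under other parameters never enter; Definition \ref{def:paramdimp} only uses the constraint attached to $p$. There is also no two-sorted Morley rank to fall back on: for $n\geq 2$ the theory of $\M_n$ is unstable, since $E_y(x,x')$ has the independence property.

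If you nonetheless want the claim that a nonalgebraic $(D)_q$ with $q\neq p$ meets every $E_p$-class in an infinite set, you cannot get it from the genericity axioms of $T^*_P$. Those axioms describe the limit $\M$, and $\M_n$ is not a model of $T^*_{\feq}$, since each $E_p$ has only $n$ classes. The correct input is the extension property of $K_n$: add a new object in the prescribed $E_p$-class and $E_q$-class, and place it in existing classes for all other parameters in the finite substructure. This stays inside $K_n$ because no new classes are created.
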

The interesting behavior for the dimension in this example then comes from the degree component, which may be unbounded in the approximation.

\begin{example}
    Again we restrict our attention to the theory $T^*_{\feq}$. Consider the set $X$ defined over $C=\{p,c\}$ by the formula $\neg E_p(x,c)$. Let $\M_n\in\cH$ with $n>1$; that is, $\M_n$ represents a parameterized equivalence relation with $n$ classes for each parameter. In the unparameterized structure $(\M_n)_p$, $X$ denotes the subset of $(\M_n)_p$ excluding a single equivalence class (the $E_p$-class of $c$). $X$ then has Morley rank 1 and Morley degree $n-1$, as by specifying an $E_p$ class for $x$ (of which there are $n-1$ choices) we can partition $X$ into $n-1$ infinite pieces. As we move up through the approximation, $n$ increases unbounded so the Morley degree will also increase accordingly. 

    Intuitively, this captures the element of forking in the limit that cannot be witnessed in the approximation. In the limit structure there are infinitely many classes with respect to each parameter, so we can take an indiscernible sequence moving between equivalence classes and get inconsistency via a formula specifying the equivalence class. However, in the approximation there are only finitely many classes so to be indiscernible a sequence must be contained in a single $E_p$-class, meaning the inconsistency that comes from fixing a class cannot be witnessed downstairs. This discrepancy between finite and infinitely many classes is what is captured by the unbounded multiplicity.

    Note that the multiplicity in the unparameterized structure is dependent on the choice of $p$. In this example, if we instead consider the dimension with respect to some other parameter $q$, then since $\varphi$ says nothing about the structure of $X$ under the parameter $q$ the degree is maximal with respect to $q$\textemdash that is, the degree will be $n$.
\end{example}

\begin{remark}
    In $T^*_{\feq}$ the rank component is always the same for every parameter and the interesting behavior occurs in the multiplicity coordinate where we can see unbounded growth through the approximation. This is not necessarily true in general\textemdash unbounded multiplicity is a property of the approximation of $T^*_{\feq}$ that is not shared by every parameterized theory. Likewise, the rank may behave in more complicated ways in other examples.
\end{remark}

We will sometimes use the notation $\rk_p$ or $\deg_p$ for the rank and degree components of $\dim_p$. We consider $\dim_p$ as a partially ordered tuple, totally ordered in each coordinate. The ordering on the rank coordinates is that of the natural numbers (as each $\N\in\cH$ has finite Morley rank) and the ordering on the degree components is determined by an infinite difference\textemdash that is, we say $\deg_p(X)<\deg_p(Y)$ if and only if $\deg_p(Y)-\deg_p(X)$ is infinite. Note that under this definition the approximating structures cannot witness a drop in the degree components; however, an infinite difference may occur in the limit in cases where multiplicity is unbounded (and therefore infinite in the limit).

\begin{definition}
    For a tuple $a$, let $\rk_=^\N(a/B)$ be the Morley rank of $a$ over $B$ in $\N$ taken in the language of equality. Let $X$ be defined by the formula $\varphi(x,b)$. Then we define \[\rk_=^\N(X)=\rk_=^\N(\varphi(x,b))=\max(\rk_=^\N(a/b):a\in X).\]
\end{definition}
\begin{remark}
    Since $\rk_=$ tracks only equalities over the defining set, in $\N$ large enough to witness all relevant parameters $b$ it will always agree with the corresponding rank in $\M$. Under the assumption that $T$ has trivial $\acl$, all algebraicity comes from equality so $\rk_=$ is sufficient to capture any algebraic dependence in the dimension. In the case where $T$ has non-trivial algebraicity, the means of capturing this behavior in the dimension will likely be more complicated.
\end{remark}

\begin{definition}\label{def:paramdim}
    Let $X$ be definable in $M$ by the formula $\varphi(x,b)$. Let $\pi_O(X)$ be the projection to the object coordinates. Then for large enough $\N$ the approximated dimension of $X$ in $\N$ over a base set $C$ with finitely many parameters is given by:
    \[\dim_C^{\N}(X)=(\rk_=^\N(X\cap N/b),(\dim_p^\N(\pi_O(X)\cap N))_{p\in C}).\]
    Similarly for types we let \[\dim_C^{\N}(a/B)=(\rk_=^\N(a/B),(\dim_p^\N(a/B))_{p\in C}).\]
    In both cases the limit dimension is then determined as in Definition \ref{def:limdim}.
\end{definition}
\begin{remark}
    In this definition we specify that $C$ has finite parameter sort to prevent the case where we are comparing tuples of infinite length. This will be necessary in the $T^*_{\feq}$ example in order to handle independence between the structures imposed by different parameters. 
\end{remark}

This gives us a general candidate for limit dimension in a parameterized theory, valued in a partially ordered set of functions. We now define the following independence notion, representing a drop in limit dimension. 
\begin{definition}\label{def:dimdrop}
    We say $a\ind^{\dim}_Cb$ if for every $C'\subseteq C$ with finite parameter sort, \[\dim_{C'}(a/Cb)=\dim_{C'}(a/C).\] That is, the dimension of $a$ does not drop over $b$ with respect to any finite subset of parameters in $C$.
\end{definition}

Note that both $\dim_C^{\N}$ and $\dim_C$ are valued in partially ordered sets; the former due to the independence of the parameter variables (e.g, the dimension of $X$ may be less than the dimension of $Y$ with respect to $p_1$ but greater with respect to $p_2$) and the latter due to the partial ordering of functions. In general the fact that the image is not totally ordered may cause issues in the case of incomparables. In particular, the dimension of a type is typically defined as the infimum of the dimensions of all formulas it implies; however in a partial ordering there is the possibility that this may not meaningfully exist.

For the remainder of this subsection we restrict our attention to $T^*_{\feq}$. In this case, when $C$ has finitely many parameters we can convert the image of $\dim_C$ to a lexicographically ordered triple of natural numbers while preserving the partial ordering, meaning that we can in fact consider the values in a totally ordered set.

\begin{definition}
    Let $\varphi$ be a complete formula in $\lang_{\feq}$ and $p$ an element of the parameter sort. We say that a collection of object variables $x_1,\dots,x_n$ is \textit{$p$-independent in $\varphi$} if $\varphi$ does not imply $E_p(x_i,x_j)$ for any $i\neq j$.
    We say that the $E_p$-class of a variable $x_i$ is \textit{bounded} if there are only finitely many choices of $E_p$-class for $x_i$ that are consistent with $\varphi$. Otherwise $x_i$ has \textit{unbounded $E_p$-class}.
\end{definition}

\begin{remark}\label{rem:p-ind}
    The idea behind $p$-independent variables is to restrict attention to those for which fixing the $E_p$-class of one will \textit{not} fix the class of any other variable. For formulas in general (which may not be complete), it is likely necessary to impose an additional condition for $p$-independence along the lines of ``$\varphi$ does not imply any finite disjunction of positive equivalences on more than one $x_i$ (with parameters from $M$)''. This is because $\varphi$ may imply a dependence between $x_i$ and $x_j$ of a form such as $E_p(x_i,b)\lor E_p(x_j,b')$, in which $x_i$ and $x_j$ may not be equivalent but nonetheless fixing the class of one may fix the class of the other. However since complete formulas make a decision on every disjunction, in the complete case such a dependence would already imply a fixed class for one or both of $x_i$ and $x_j$.
\end{remark}

\begin{lemma}\label{lem:feq-poly-degree}
    Let $\varphi(\xx)$ be a complete formula that defines the set $X$. Then when computing $\dim_C(X)$, for any parameter $p\in C$ we can consider $\deg_p(X)$ as a polynomial function on the number of classes in the approximating structures whose polynomial degree equals the number of $p$-independent variables of $\varphi$ with unbounded $E_p$-class.
\end{lemma}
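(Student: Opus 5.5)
Fix $p\in C$ and work in $\M_n$ for $n$ large enough that $\M_n$ contains all parameters of $\varphi$ and, by Lemma~\ref{lem:feq-ctv}, $\varphi$ is still complete (equivalently, the same quantifier-free formula) in $\M_n$. By quantifier elimination and Fact~\ref{fact:tfeq-rank}, $\rk_p(X\cap M_n)$ is the number $r$ of irredundant object variables not fixed by equalities, independent of $n$. So the aim is to count the rank-$r$ components of $X$ in $(\M_n)_p$ and show that, as a function of $n$, this count is a polynomial whose degree equals the number $k$ of $p$-independent variables with unbounded $E_p$-class.

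\textbf{Step 1: reduce to $E_p$-class assignments.} In the unparameterized structure $(\M_n)_p$, an equivalence relation with $n$ infinite classes, a complete type in $r$ non-algebraic variables has rank $r$ exactly when it specifies, for each variable, which $E_p$-class it lies in, and otherwise only says that the variables are distinct and avoid the finitely many named elements. Hence $\deg_p(X)$ equals the number of functions from the variables to the $n$ classes of $E_p$ that are consistent with $\varphi$. Because $\varphi$ is complete, the equivalence pattern among the variables under $E_p$ is determined, and for each variable $\varphi$ either forces its class to be the class of some named element or says it avoids a fixed finite set of named classes. This is where completeness is used, as in Remark~\ref{rem:p-ind}. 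The constraints from other parameters $q\neq p$ do not restrict the $E_p$-assignment. This follows from the genericity axioms of $T^*_{\feq}$, which must still hold in the approximating $\M_n$ for large $n$; that is again a consequence of Lemma~\ref{lem:feq-ctv}, since these are finitely many first-order statements.

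\textbf{Step 2: count.} Group the variables into $E_p$-blocks, the classes of the pattern forced by $\varphi$, and choose one representative per block. Blocks whose class is bounded contribute only finitely many choices, bounded independently of $n$. Let $k$ be the number of unbounded blocks, which is the number of $p$-independent variables with unbounded $E_p$-class. Suppose $\varphi$ forces the $k$ unbounded representatives into pairwise distinct classes, all avoiding a set of $m$ named classes. Then the number of assignments is
\[(n-m)(n-m-1)\cdots(n-m-k+1),\]
multiplied by the constant coming from the bounded blocks. This is a polynomial in $n$ of degree $k$ with positive leading coefficient.

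Completeness is needed here again: it decides whether two unbounded blocks lie in the same class. If they do, they are a single block, since then $\varphi\vdash E_p(x_i,x_j)$; otherwise they are in distinct classes. This is precisely why the count reduces to the notion of $p$-independence.

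\textbf{Main obstacle.} The delicate step is Step 1. One must verify that the number of rank-$r$ components really equals the number of consistent class assignments, and in particular that $\varphi$ imposes no hidden interaction between the $E_p$-structure and the $E_q$-structure for $q\neq p$. Such an interaction could rule out some $p$-assignments and break the polynomial count. The plan is to handle this by extension arguments in the Fra\"iss\'e class $K_n$, in the same spirit as the amalgamation in the proof that $K_Q$ is a Fra\"iss\'e class. Given any assignment of $E_p$-classes compatible with the pattern, realize the $E_q$-data required by $\varphi$ freely, by placing new objects into existing $E_q$-classes. This stays inside $K_n$ and therefore embeds into $\M_n$, which shows that each such assignment is consistent with $\varphi$.
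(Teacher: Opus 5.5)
Your proposal is correct and follows essentially the paper's route. Both arguments reduce $\deg_p$ to the number of $E_p$-class assignments consistent with $\varphi$ on a transversal of the $p$-independent variables, where bounded variables contribute a factor of $1$ and each unbounded one contributes $n$ minus a constant. Your use of completeness is cleaner than the paper's case-by-case treatment of $\neg E_p(x_i,x_j)$ dependences: it forces distinct unbounded blocks to be pairwise $E_p$-inequivalent and to avoid all $m$ named classes, giving exactly $(n-m)(n-m-1)\cdots(n-m-k+1)$. The cross-parameter issue you flag is one the paper sidesteps, since $\dim_p$ is defined only from the conjuncts involving $p$.
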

\begin{proof}
    Since $\varphi$ is complete it makes a choice for every disjunct, so writing in disjunctive normal form we can consider $\varphi$ as a conjunction of literals. By definition of $\deg_p(\varphi)$ we can take $\varphi$ to be conjunction of literals in the unparameterized language, considering only the conjuncts that impact the structure dictated by the parameter $p$. Each literal can only say that a variable $x_i$ is (in)equal or (in)equivalent (via $p$) to a constant or another variable. By Fact \ref{fact:tfeq-rank}, the rank $\rk_p$ of any definable set in the unparameterized language is exactly the number of irredundant variables in the defining formula that are not fixed by equalities, and is independent of the choice of $p$. Now, $\deg_p$ represents the number of $\rk_p$ subsets into which $X$ can be partitioned and the only way to obtain $\rk_p$ subsets via conjunction with $\varphi$ is to fix the equivalence class of a variable that does not already have a fixed class. The multiplicity is then exactly the number of ways to choose $E_p$-classes for all the variables.

    Under a fixed parameter $p$ we restrict our attention to the $p$-independent variables of $\varphi$. Per Remark \ref{rem:p-ind}, this guarantees that there will be no cases in which fixing the class of one $x_i$ fixes the class of any other $x_j$ (though the choice of class for $x_i$ may still reduce the number of choices for $x_j$).

    Denote the $p$-independent variables of $\varphi$ by $x_1,\dots,x_m$ and further assert that $x_1,\dots,x_\ell$ are variables with unbounded $E_p$-class while $x_{\ell+1},\dots,x_m$ are the variables with bounded $E_p$-class. Assuming no additional conditions have been imposed at this point, there are $n-k_i$ choices of $E_p$-class for each of $x_1,\dots,x_\ell$ where $n$ is the number of classes in the approximating structure $\M_n$ and $k_i$ is the (finite) number of $E_p$-classes that \textit{cannot} be chosen for $x_i$ without contradicting $\varphi$, and exactly one choice of $E_p$-class for each of $x_{\ell+1},\dots,x_m$ (since $\varphi$ is complete and therefore fixes the class).

    If there is no dependence between any of $x_1,\dots,x_m$ then the number of choices for the $E_p$-class of all the variables is just the product of the number of choices for each individual variable:
    \[\prod_{1\leq i\leq\ell}(n-k_i).\] 
    Note that each of $x_{\ell+1},\dots,x_m$ contributes a factor of 1, so we need only consider the variables with unbounded $E_p$-class. As $n$ grows through the approximation, in the limit we clearly see that this becomes a polynomial in $n$ with degree $\ell$.

    Now we need to consider the possibility of one or more dependences between variables. The only case in which the choice of class for some $x_i$ can affect the number of choices remaining for some $x_j$ is if $\varphi$ implies $\neg E_p(x_i,x_j)$. Any other dependence between variables would have to take the form $\bigwedge(\psi_i(x_i))\to\psi_j(x_j)$, where each $\psi_i$ makes an assertion about the $E_p$-class of $x_i$. This we can rewrite as a disjunction $\bigvee(\neg \psi_i(x_i))\lor \psi_j(x_j)$ and by completeness $\varphi$ already decides the outcome. Thus any dependence between $p$-independent variables for the number of choices of $E_p$-class is reduced to this binary case. We also need only consider the case when both $x_i$ and $x_j$ have unbounded $E_p$-class.

    Without loss of generality, suppose $\varphi$ implies $\neg E_p(x_1,x_2)$. Suppose we have already chosen $E_p(x_1,b)$ for some $b\in M$. Then, letting $n-k_2$ represent the number of choices for $x_2$ before fixing $E_p(x_1,b)$, we have two cases: either $E_p(x_2,b)$ is consistent with $\varphi$ and the number of choices remaining for $x_2$ is $n-k_2-1$, or it is inconsistent and the number of choices is unchanged. There can be at most $k$ distinct $E_p$-classes for which $E_p(x_2,b)$ is inconsistent with $\varphi$ (for some finite $k$), so the number of ways to choose both $x_1$ and $x_2$ will have the form: \[k(n-k_2)+(n-k_1-k)(n-k_2-1).\]
    Note that this is a polynomial in $n$ of degree 2, as desired. It follows that extending to more $p$-independent variables we will always end up with a polynomial of degree $\ell$ as dependences of the form $\neg E_p(x_i,x_j)$ only reduce the number of choices for $x_j$ by a finite amount, so the number of choices for $x_j$ will remain unbounded for large enough $n$.
\end{proof}

\begin{remark}\label{rem:feq-poly-degree}
    For complete formulas, Lemma \ref{lem:feq-poly-degree} gives an explicit description of the limit of $\deg_p$ as a polynomial function on $n$. Since we consider $\deg_p(X)<\deg_p(Y)$ if and only if $\deg_p(Y)-\deg_p(X)$ is infinite, the only relevant data for comparing $\deg_p$ of two different complete formulas is the polynomial degree (from the proof we see that the leading coefficient is always one). So we can in fact consider $\deg_p$ to be just the polynomial degree.
\end{remark}

\begin{remark}
    In general, $\aleph_0$-categoricity allows us to write any formula $\psi$ as a disjunction of complete formulas $\varphi_i$. Then $\deg_p(\psi)$ is bounded above by the sum of $\deg_p(\varphi_i)$ for each $i$. This is only an upper bound and doesn't give an exact value for the multiplicity of $\psi$ with respect to $p$, but for our purposes this is enough. In fact, for $\ind^{\dim}$ our notion of independence we are working explicitly with the dimension of complete types and therefore only need to consider the case of complete formulas. 
\end{remark}

Now we will conclude that in $T^*_{\feq}$ $\dim_C$ can be taken as valued in a totally ordered set.

\begin{lemma}\label{lem:feq-pi}
    Suppose $C$ has finitely many parameters. Then there is an order-preserving function $\pi:\im(\dim_C)\to\omega^3\cup\{-\infty\}$ on complete formulas in $T^*_{\feq}$. 
\end{lemma}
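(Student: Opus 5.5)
We construct $\pi$ from the three pieces of data that $\dim_C$ records for a complete formula $\varphi$ when $C$ contains only finitely many parameters $p_1,\dots,p_r$. By Definition \ref{def:paramdim}, $\dim_C(X)$ is the class in $S$ of the function $\N\mapsto(\rk_=^\N(X\cap N/b),(\rk_{p_i}^\N,\deg_{p_i}^\N)_{i\leq r})$. The first coordinate is eventually constant; by the Remark following the definition of $\rk_=$, it agrees with the equality-rank in $\M$ once $\N$ contains $b$. By Fact \ref{fact:tfeq-rank}, each $\rk_{p_i}$ is eventually constant and does not depend on $i$. By Lemma \ref{lem:feq-poly-degree} and Remark \ref{rem:feq-poly-degree}, each $\deg_{p_i}$ is eventually a monic polynomial in $n$, and only its polynomial degree $\ell_{p_i}(\varphi)$ matters for the order. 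So every value of $\dim_C$ on a complete formula is determined by the tuple $(r_=,r,\ell_{p_1},\dots,\ell_{p_r})$ of natural numbers. We then set
\[\pi(\dim_C(X))=\Bigl(r_=(X),\ r(X),\ \textstyle\sum_{i\leq r}\ell_{p_i}(X)\Bigr)\in\omega^3,\]
ordered lexicographically, and send the empty set (inconsistent $\varphi$) to $-\infty$.

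The first step is to check that $\pi$ is well defined. If two complete formulas give the same class in $S$, their eventual ranks agree. Their degree polynomials must also have the same degree, since otherwise their difference is unbounded, and this is exactly the relation the paper uses to compare degrees. So the triple depends only on the value $\dim_C(X)$. The second step is order preservation. Suppose $\dim_C(X)\leq\dim_C(Y)$ in the coordinatewise partial order on $S$, with degrees compared by infinite difference. Coordinatewise, either the ranks are strictly smaller or equal, and each $\ell_{p_i}(X)\leq\ell_{p_i}(Y)$. If the ranks are equal, the sum of the $\ell_{p_i}$ is monotone. If a rank drops, the lexicographic order already places $\pi(\dim_C(X))$ below $\pi(\dim_C(Y))$ regardless of the third coordinate. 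A strict inequality in some coordinate yields a strict inequality in the image, provided the rank coordinates are placed before the degree coordinate.

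The main obstacle is making the first two coordinates genuinely comparable with the degree data. A priori, $\rk_=$ and $\rk_p$ could move in opposite directions, and so could the various $\ell_{p_i}$. For the rank coordinates, I would argue that for complete formulas in $T^*_{\feq}$ both count the irredundant object variables not fixed by equality (Fact \ref{fact:tfeq-rank}, together with trivial $\acl$). So $\rk_=$ and $\rk_p$ move together, and the order among them is total. For the degree coordinates, I would show $\ell_{p_i}(X)\leq r(X)$: a variable with unbounded $E_{p_i}$-class must be unfixed, hence counted in the rank. This means a drop in any single $\ell_{p_i}$ with fixed rank is detected by the sum, while $\pi$ stays bounded in $\omega^3$. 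If summing turns out to be too coarse to preserve strict inequalities, the fallback is to encode $(\ell_{p_1},\dots,\ell_{p_r})$ in base $r+1$ in the third coordinate. The bound $\ell_{p_i}\leq r$ makes this injective and keeps it monotone in each coordinate.
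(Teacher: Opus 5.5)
Your proposal is the paper's proof: the same lexicographically ordered triple $\bigl(\rk_=(X),\rk_p(X),\sum_{p\in C}d_p(X)\bigr)$. Order preservation follows because an inequality in $S$ either forces each coordinate to be no larger or drops an earlier rank coordinate, so your first two paragraphs already suffice. The final ``obstacle'' paragraph is not needed for the weak order preservation the lemma claims, and its premise is inaccurate---$\rk_=$ also counts unfixed parameter-sort variables, so it need not move in step with $\rk_p$---so drop it, along with the base-$(r+1)$ fallback, where $r$ is doing double duty as the number of parameters and the rank.
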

\begin{proof}
    Let $X$ be a set definable by the complete formula $\varphi$. First note that $\rk_=(X)$ captures exactly the number of irredundant variables in $\varphi$ not bound to a finite set, so it stabilizes to a fixed value on a cone in the approximation. Moving to a higher cone if necessary, we can consider the dimension to be constant on this coordinate. 

    Now we consider the tuples $(\rk_p,\deg_p)(X)$ for each parameter $p\in C$. Again from Fact \ref{fact:tfeq-rank}, $\rk_p$ is independent of the choice of $p$ and will stabilize to a fixed value on a cone, so we can take the single value as our second coordinate.

    From Lemma \ref{lem:feq-poly-degree} we have that each $\deg_p$ component of the limit dimension can be expressed as a polynomial function on the number of $E_p$-classes in the approximating structure, and by Remark \ref{rem:feq-poly-degree} the relevant data is in fact contained only in the polynomial degree of this function. Thus we let $d_p(X)$ represent the polynomial degree of $\deg_p(X)$ for each $p\in C$, and take our third coordinate to be $\sum_{p\in C}d_p(X)$. Since $C$ has only finitely many parameters, this will be finite.

    Now we define $\pi$ such that:\[\pi(\dim_C(X))=\left(\rk_=(X),\rk_p(X),\sum_{p\in C}d_p(X)\right),\] with the lexicographic ordering. It follows immediately from the definition that given any definable $X$ and $Y$, if $\dim_C(X)\leq\dim_C(Y)$, then $\pi(\dim_C(X))\leq\pi(\dim_C(Y))$.
\end{proof}

\begin{corollary}
    Limit dimension on types in $T^*_{\feq}$ is invariant of choice of filtration.
\end{corollary}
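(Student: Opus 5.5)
The plan is to show that, after applying the order-preserving map $\pi$ of Lemma \ref{lem:feq-pi}, the limit dimension of a complete type $\tp(a/B)$ over a base $C$ with finitely many parameters is computed from data living in $\M$ alone. If so, it cannot depend on which nice filtration of $\cH$ is used. We fix two nice filtrations $\cH_1,\cH_2\subseteq\cH$ and compare the values of $\pi(\dim_C(a/B))$ computed with respect to each.

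\textbf{Step 1: reduce to complete formulas.} Since $T^*_{\feq}$ is $\aleph_0$-categorical with quantifier elimination, $\tp(a/B)$ restricted to the finitely many relevant parameters and elements is isolated by a complete formula $\varphi$. The infimum defining $\dim_C(a/B)$ is therefore attained at $\varphi$, and this holds in both filtrations simultaneously. Convergence of truth value, which is part of niceness, guarantees that $\varphi$ still defines $X\cap N$ for all sufficiently large $\N$ in either filtration.

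\textbf{Step 2: show each coordinate of $\pi(\dim_C(X))$ is intrinsic to $\varphi$.}
\begin{enumerate}
\item $\rk_=(X)$ is the number of irredundant object variables not fixed by equalities.
\item $\rk_p(X)$ is given by Fact \ref{fact:tfeq-rank} as the same count, and it stabilizes on a cone.
\item By Lemma \ref{lem:feq-poly-degree} and Remark \ref{rem:feq-poly-degree}, each $d_p(X)$ equals the number of $p$-independent variables of $\varphi$ with unbounded $E_p$-class.
\end{enumerate}
The notions of $p$-independence and boundedness of $E_p$-classes are defined purely syntactically from the complete formula $\varphi$, equivalently by what $\M$ satisfies. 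So $\pi(\dim_C(a/B))$ is a function of $\varphi$ and $C$ only.

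\textbf{Step 3: check the cones are cofinal in each filtration.} The cone on which the stabilization in Step 2 occurs exists in $\cH_1$ and in $\cH_2$ separately. Both filtrations are directed and cover $\M$, and both contain structures of arbitrarily large class number $n$ for every parameter of $C$. For a filtration that is a chain of $\M_n$'s this is immediate. For a general nice filtration it follows from convergence of truth value: $\M$ has infinitely many $E_p$-classes, so "at least $k$ classes" is eventually true for each $k$.

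\textbf{Main obstacle.} The hard step is the last point of Step 3 together with the claim that the polynomial degree is the same along any cofinal family. In an arbitrary element of $\cH$, different parameters may have different numbers of classes, so $\deg_p$ is a polynomial in $n_p(\N)$ rather than in a single $n$. To handle this, I would repeat the counting argument of Lemma \ref{lem:feq-poly-degree} with $n$ replaced by $n_p(\N)$. This shows that the leading-order growth of $\deg_p$ is still $n_p(\N)^{d_p}$, with $d_p$ unchanged. Since $n_p(\N)\to\infty$ along both filtrations, the comparison "differs by an infinite amount" yields the same degree $d_p$ in each. Consequently $\pi(\dim_C)$ agrees for $\cH_1$ and $\cH_2$, and hence so does $\ind^{\dim}$.
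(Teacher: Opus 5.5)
Your proposal is correct and takes the same route as the paper. The paper's proof simply observes, via the proof of Lemma \ref{lem:feq-pi}, that the three coordinates of $\pi(\dim_C)$ are syntactic features of the defining complete formula and hence independent of the nice filtration; your Steps 1--3 spell that observation out in more detail. The ``main obstacle'' you flag does not actually arise: every element of $\cH$ is isomorphic to some $\M_n$, so all parameters in a given $\N\in\cH$ have the same number $n$ of classes. Your $n_p(\N)$ generalization is therefore harmless but unnecessary.
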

\begin{proof}
    Per the proof of the preceding lemma, the data captured by $\dim_C(X)$ is: the number of irredundant variables of the defining formula not fixed by equalities, the number of irredundant object variables not fixed by equalities, and the sum total of $p$-independent object variables with respect to each $p\in C$ without fixed $E_p$-class. These are all features of the formula itself, not a specific filtration of $\cH$, so we will get the same output for large enough $\N$ in any nice filtration of $\cH$.
\end{proof}

\begin{remark}
    The reason for taking the sum of the degree components is the independence of the parameters. Each $d_p$ tracks the amount of $p$-independent variables without a fixed $E_p$-class, but the complexity only depends on the total number, not on the distribution among specific parameters.
\end{remark}

\begin{theorem}\label{thm:feq-dimkim}
    In $T^*_{\feq}$, $\ind^{\dim}=\ind^K$.
\end{theorem}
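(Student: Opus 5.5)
The plan is to compare both sides with an explicit combinatorial description of Kim-independence in $T^*_{\feq}$, and to show that each way of failing that description corresponds to exactly one coordinate of the order-preserving map $\pi$ from Lemma \ref{lem:feq-pi} dropping.

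\textbf{Step 1: the description of $\ind^K$.} On the Kim-independence side I would invoke the known characterization of $\ind^K$ in $T^*_{\feq}$, which comes from the analysis of generic parameterized equivalence relations in \cite{kruckman2019disjoint} and \cite{chernikov2016model}. Since $\acl$ is trivial, I expect it to say the following. $a\ind^K_C b$ holds if and only if two conditions hold:
\begin{enumerate}
\item $aC\cap bC\subseteq C$;
\item for every parameter $p\in C$ and all objects $a'\in a\setminus C$ and $b'\in b\setminus C$ with $E_p(a',b')$, there is an object $c\in C$ with $E_p(a',c)$.
\end{enumerate}
Condition (2) says that $a'$ and $b'$ may share an $E_p$-class only if that class is already represented in $C$. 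I would justify this directly by Kim-dividing along Morley sequences in a $C$-invariant type, working over a model containing $C$ if necessary. A Morley sequence can move $b'$ through pairwise distinct fresh $E_p$-classes, so $E_p(x,b')$ Kim-divides when the class of $b'$ is not $C$-represented. Conversely, formulas that relate $a$ to $b$ only through parameters outside $C$ do not Kim-divide, by the genericity axioms: new parameters act independently.

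\textbf{Step 2: reduce to complete formulas.} We have $\dim_{C'}(a/Cb)\le\dim_{C'}(a/C)$ automatically. Both sides are minima over complete formulas, and $T^*_{\feq}$ is $\aleph_0$-categorical over finite sets. So by Lemma \ref{lem:feq-pi} it suffices to compare the three coordinates of $\pi$ applied to the isolating complete formulas of $\tp(a/Cb)$ and $\tp(a/C)$, for each finite-parameter $C'\subseteq C$.

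\textbf{Step 3: match coordinates with failures.} I would match each coordinate with one failure of the description in Step 1.
\begin{enumerate}
\item The coordinate $\rk_=$ drops exactly when some element of $a$ becomes equal to an element of $b$ that is not in $C$. This is exactly a failure of condition (1).
\item The coordinate $\rk_p$ counts the non-fixed object variables (Fact \ref{fact:tfeq-rank}), so it adds nothing new beyond (1) restricted to the object sort.
\item The third coordinate is $\sum_{p\in C'}d_p$, the number of $p$-independent object variables with unbounded $E_p$-class (Lemma \ref{lem:feq-poly-degree}). Over $C$, the variable $x_{a'}$ has bounded $E_p$-class if and only if $a'$ is $E_p$-equivalent to some $c\in C$. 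Over $Cb$ the class becomes bounded precisely when $E_p(a',b')$ holds for some $b'\in b$. So $d_p$ drops exactly when condition (2) fails for this $p$.
\end{enumerate}
One also has to check that $p$-independence is not disturbed. If $E_p(a_1,b')$ and $E_p(a_2,b')$ both hold, then $E_p(a_1,a_2)$ holds, and this was already decided over $C$. Taking $C'=\{p\}$ for the relevant $p$ gives the implication ``$\ind^{\dim}$ implies $\ind^K$''. Summing over $p\in C'$ gives the converse, because no coordinate can drop without one of the listed failures.

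\textbf{Main obstacle.} I expect the hardest step to be Step 1, in particular the direction showing that conditions (1) and (2) suffice for Kim-independence. Everything else is careful bookkeeping with Lemma \ref{lem:feq-poly-degree}. For that direction I would first try to cite the characterization directly. Failing that, I would argue by the independence theorem for $\ind^K$ plus quantifier elimination: an amalgam of $\tp(a/Cb_i)$ along a Morley sequence $(b_i)$ is consistent, because the only constraints on $a$ that involve $b_i$ pass through parameters in $C$ and through $C$-represented classes, and these are the same for every $i$.
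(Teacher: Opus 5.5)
Your proposal is correct and is essentially the paper's proof. Like the paper, you cite the characterization of $\ind^K$ in $T^*_{\feq}$ from \cite[Section 6.3]{chernikov2016model}, and then, via Lemma \ref{lem:feq-pi} and Lemma \ref{lem:feq-poly-degree}, match the $\rk_=$ coordinate with $A\cap B\subseteq C$ and the $d_p$ coordinates with disjointness of $E_p$-classes over $C$; the only cosmetic difference is that you isolate a single parameter by taking $C'=\{p\}$, whereas the paper notes that each $d_p$ can only drop, so equality of the sums forces equality componentwise.
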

\begin{proof}
    From \cite[Section 6.3]{chernikov2016model}, $A\ind^K_CB$ in $T^*_{\feq}$ if and only if $A\cap B\subseteq C$ and for every parameter $p\in C$, the $E_p$-classes of $A$ and $B$ are disjoint over $C$. By Definition \ref{def:dimdrop}, $A\ind^{\dim}_CB$ if and only if for every $C'\subseteq C$ with finite parameter sort, $\dim_{C'}(A/BC)=\dim_{C'}(A/C)$. Considering $\dim_{C'}$ as a triple $(\rk_=,\rk_p,\sum_{p\in C'} d_p)$, this is equivalent to the following assertions:
    \begin{itemize}
        \item $\rk_=(A/BC)=\rk_=(A/C)$,
        \item $\rk_p(A/BC)=\rk_p(A/C)$, and
        \item $\sum_{p\in C'} d_p(A/BC)=\sum_{p\in C'}d_p(A/C)$.
    \end{itemize}
    The first point is equivalent to having $A$ and $B$ disjoint over $C$, the first condition for $\ind^K$. Note that for each parameter $p$, $d_p(A/BC)<d_p(A/C)$ if and only if there is some $a\in A$ with unbounded $E_p$-class in $\tp(A/C)$ and bounded $E_p$-class in $\tp(A/BC)$, which can only occur in the case of an $E_p$-equivalence between $A$ and $B$ not witnessed in $C$. Since the degree is monotone, equality of the sums implies equality of each component, so this equality is equivalent to having the $E_p$-classes of $A$ and $B$ disjoint over $C$ for every $p\in C'$. 
    Then, since $A\ind^{\dim}_CB$ holds if and only if the equality is true for \textit{every} $C'\subseteq C$ with finite parameter sort, the $E_p$-classes are disjoint over $C$ for every $p\in C$. Therefore $\ind^{\dim}=\ind^K$.
\end{proof}

\subsection{Bounded vs. unbounded multiplicity}

At this point we will briefly mention the vector space example $T_\infty$ and how it necessarily differs from $T^*_{\feq}$. The key point is that while in $T^*_{\feq}$ the interesting behavior in the dimension comes from unbounded multiplicity, this cannot occur in the Granger example.

\begin{theorem}\label{thm:bdd-mult}
    Let $(K,V)\models T_\infty$. For any definable subset $X$ of $(K,V)$, $X$ has bounded multiplicity in the approximation by finite dimensional subspaces.
\end{theorem}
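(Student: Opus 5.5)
We need to show that for a fixed definable $X$ (over finitely many parameters $b$), the Morley degree of $X\cap N$ in the approximating finite-dimensional structures $\N=(K,V_n)$ stays bounded as $n$ grows. My plan is to exploit the fact that in $T_\infty$ the field sort is a pure algebraically closed field, so each $\N$ is interpretable in $K$ alone: fixing a basis of $V_n$, $V_n\cong K^{m}$ with $m=\dim V_n$, and the form becomes a polynomial (bilinear) map given by a Gram matrix. Thus $X\cap N$ is, after coordinatization, a constructible subset of some $K^{Nm+\ell}$, and its Morley degree equals the number of irreducible components of top dimension of its Zariski closure. The task is to bound this number uniformly in $m$.

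First I will use convergence of truth value together with quantifier elimination for $T_\infty$ (in the Granger language with the $\theta_n,\pi_{n,i}$ coordinate functions) to reduce, for all large $\N$, to $X$ defined by a fixed quantifier-free formula over $b$. Then I decompose $V_n=\spann(b_V)\oplus\spann(b_V)^{\perp}$ (possibly after enlarging by a hyperbolic/nondegenerate complement so the restriction of the form to the first summand is nondegenerate; this is a fixed finite-dimensional piece $W$ independent of $n$). Each vector variable $x$ then splits as $x=w+u$ with $w\in W$ and $u\in W^\perp$. A quantifier-free formula over $b$ only sees the coordinates of the $w$-parts (polynomially) and the Gram data $[u_i,u_j]$ and linear dependence relations among the $u_i$. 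So $X\cap N$ is a definable family fibred over a constructible set $Z\subseteq K^{r}$ (with $r$ independent of $n$), where the fiber over a point $z$ is the set of tuples $\bar u$ in $W^\perp_n$ with a prescribed Gram matrix and prescribed linear dependence pattern.

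The key step, and the main obstacle, is to show that these fibers are irreducible (or have a bounded number of components, independent of $n$). By Witt's theorem the fiber is a single orbit of the orthogonal/symplectic group $G_n=O(W^\perp_n)$ or $Sp(W^\perp_n)$; it is a homogeneous space $G_n/H$. In the symplectic case $G_n$ is connected, so orbits are irreducible. In the orthogonal case $O$ has two components, so each fiber has at most $2$ components; alternatively one uses that $SO$ already acts transitively on these orbits once $\dim W_n^\perp$ exceeds twice the number of variables (the stabilizer meets the non-identity component). I would try that second route first, since it gives irreducibility outright. Having fibers with bounded number of top-dimensional components and a fixed, $n$-independent base $Z$ (whose own number of components is fixed since $Z$ lives in a fixed power of $K$), the standard fibration estimate in ACF gives that the number of top-dimensional components of the total space is at most $\deg(Z)\cdot 2$, and the dimension of fibers is constant on the constructible pieces of a finite partition of $Z$ given by the dependence patterns. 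Hence $\deg(X\cap N)$ is bounded independently of $n$, proving bounded multiplicity. The care needed is in checking fiber dimension is uniform on each piece of $Z$, so that top-rank components of the total space lie over top-dimensional pieces; this I expect to follow from the homogeneity of the fibers, whose dimension depends only on the rank data of the Gram matrix.
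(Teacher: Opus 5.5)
Your proposal is correct and takes a genuinely different route from the paper.

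\textbf{The paper's route.} It argues syntactically. It first disposes of field-sort sets and of finite-dimensional sets, which for large $\N$ are in definable bijection with subsets of $K$. It then uses quantifier elimination and disjunctive normal form to reduce to a conjunction of literals, and simplifies terms until the only nontrivial conditions are $\beta(x,v_i)=\alpha_i$ (and $\beta(x,y)=\alpha$ in several variables). In an orthonormal or symplectic basis it checks that each such equation lowers the rank by exactly one for large $\N$. From this it concludes that no conjunct can split $X\cap N$ into unboundedly many top-rank pieces.

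\textbf{Your route.} You read the Morley degree off as the number of top-dimensional irreducible components after coordinatizing $\N$ in $K$. You fibre $X\cap N$ over a constructible base independent of $n$. You then use Witt's theorem and the (near-)connectedness of $\mathrm{Sp}$ and $\mathrm{O}$ to bound the components of each fibre.

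\textbf{What each buys.} The paper's argument is short and stays at the level of formulas. But it only controls the rank of sets cut out by one additional equation, and the step from there to a degree bound is left informal; degree concerns arbitrary definable partitions, including ones using negated literals. Your argument proves the degree bound directly, with an explicit uniform constant. It also treats mixed field/vector variables without a case split.

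\textbf{Bookkeeping to make explicit.}
\begin{enumerate}
\item The base data must record the coefficients of the linear relations among the $u_i$, not just the dependence pattern, since $\theta_k$ and $\pi_{k,i}$ evaluated at $x_i=w_i+u_i$ depend on them.
\item The base is independent of $n$ only once $\dim(W^\perp\cap V_n)\geq 2r$, so that every Gram matrix is realized by independent vectors.
\item Witt's theorem needs the approximating $V_n$ to be nondegenerate. In the symmetric case it also needs characteristic $\neq 2$, as does your reflection argument.
\item The final bound should be $2\sum_j\deg(Z_j)$ over the pieces of the partition of $Z$, not $2\deg(Z)$, since which pieces carry the top dimension may change with $n$.
\end{enumerate}
In fact the Gram map is a submersion on linearly independent tuples, so fibre dimension depends only on the pattern; your rank-data observation already suffices, though.
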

\begin{proof}
    If $X$ is a set of field elements, then since the field is constant across the approximating family the multiplicity cannot grow as we move through the approximation. If $X$ is finite-dimensional, then for large enough $\N$ it can be put in definable bijection with a subset of the field, and the multiplicity is again bounded. This leaves the case where $X$ is infinite-dimensional.

    Suppose $\varphi(x;\bar\alpha,\vv)$ defines the infinite-dimensional set $X\subseteq V$, for $\bar\alpha\in K,\vv\in V$. Using quantifier elimination to write in disjunctive normal form and the fact that the multiplicity of a disjunction is bounded by the sum of the multiplicity of the disjuncts, we can reduce to the case where $\varphi(x;\bar\alpha,\vv)$ is a conjunction of literals. Without loss of generality we may replace $\vv$ with a basis of their spanning set and consider any additional vector parameters as terms. If $\varphi$ contains a conjunct of the form $\neg\theta_n(\vv,x)$ then $X$ is finite-dimensional, so we need only consider literals of the form $\theta_n(\vv,x)$ and (in)equalities of terms $t(x,\bar\alpha,\vv)$. This also simplifies the term structure as the coordinate functions will not produce any interesting behavior on $x$ with $\vv$, so our terms will be constructed by linear combinations of vectors in $\vv$ along with applications of the bilinear form. By adding finitely many field elements to the parameters $\bar\alpha$ to account for scalar multiplication, we can reduce any equalities incorporating the bilinear form to the form $\beta(x,v_i)=\alpha_i$.
    
    \begin{pfclaim}
        If $\varphi'=\varphi\land\beta(x,v)=\alpha$, then for large enough $\N$ in the approximation $\rk_\N(\varphi')=\rk_\N(\varphi)-1$. 
    \end{pfclaim}
    \begin{proof}
        Suppose $\N$ is large enough in the approximation so that the dimension is at least $k$. 
        
        First we consider the symmetric case. For $V$ an $n$-dimensional space we take $v_1,\dots,v_n$ to be an orthonormal basis of $V$. Let $Y=\psi(V)$; that is $Y=\{x\in V:\beta(x,v)=\alpha\}$. Without loss of generality, suppose $v=v_1$. Then since every vector element in $V$ can be written in the form $\sum_{i=1}^n\gamma_iv_i$, the restriction $\beta(x,v)=\alpha$ simplifies to $\gamma_1=\alpha$. Now consider the intersection $X\cap Y$. Since $Y$ fixes one additional degree of freedom, the dimension of $X\cap Y$ is $\dim(X)-1$. 

        In the alternating case we can make a near identical argument using a symplectic basis to get that the dimension also decreases by 1.
    \end{proof}

    Thus for large enough $\N$ in the approximation, any conjunct with $\varphi$ will drop the rank, so it is not possible for the multiplicity to grow unbounded in the approximation when $X$ is a set of singletons.

    In higher-variable cases, the additional complexity reduces to consideration of the formula $\beta(x,y)=\alpha$. By a similar argument as above, we get that for $x=\sum_{i=1}^n\gamma_iv_i$ and $y=\sum_{i=1}^n\delta_iv_i$ (in the symmetric case) this reduces to $\alpha=\sum_{i=1}^n\gamma_i\delta_i$, so the dimension again drops by one. Again the alternating case is similar.
\end{proof}

We could consider an unparameterized notion of the dimension from Definition \ref{def:paramdim} as a candidate for $T_\infty$, in which instead of passing to the rank and degree with respect to each element of a parameter sort we simply consider the rank and degree within the approximating structures as is. However, from Theorem \ref{thm:bdd-mult} we know that the degree component will be a finite constant for large enough $\N$ and as such cannot witness a drop in dimension because the limit difference between any two values will always be finite. Therefore the only meaningful part of the limit dimension would be the limit of the rank component, which is able to capture $\Gamma$-independence in the limit \cite[Corollary 8.4]{dobrowolski2023sets} but is insufficient to capture Kim-independence.

\subsection{Axiomatic notion of dimension}

Now that we have a notion of limit dimension for $T^*_{\feq}$ that recovers Kim-independence in the limit, we would like to extend this to other examples. The proof of Theorem \ref{thm:feq-dimkim} uses the explicit description of Kim-independence in $T^*_{\feq}$, which does not generalize. We will now propose an abstract axiomatic notion of dimension and show that for an approximated structure with a notion of dimension satisfying these axioms, we can recover almost all of the properties necessary to prove it is in fact Kim-independence.

\begin{definition}\label{def:approxdimaxioms}
    Let $\M\models T$ be a structure approximated by $\cH$ and let $\dim_{C\cap N}^{\N}(X\cap N)$ be the approximated dimension from Definition \ref{def:approxdim}. Suppose $\dim_{C\cap N}^{\N}(X\cap N)$ satisfies the following axioms for large enough $\N$:
    \begin{enumerate}
        \item Invariance: If $a\equiv_{C\cap N}^{\N} a'$ then $\dim_{C\cap N}^{\N}(\varphi(x,a))=\dim_{C\cap N}^{\N}(\varphi(x,a'))$
        \item Algebraicity: If $X$ is finite non-empty then $\dim_{C\cap N}^{\N}(X)=0$, and $\dim_{C\cap N}^{\N}(\varnothing)=-\infty$.
        \item Union: $\dim_{C\cap N}^{\N}(X\cup Y)=\max\{\dim_{C\cap N}^{\N}(X),\dim_{C\cap N}^{\N}(Y)\}$.
        \item Fibration: If $f:X\to Y$ is an interpretable map such that $\dim_{C\cap N}^{\N}(f^{-1}(y))\geq d$ for all $y\in Y$, then $\dim_{C\cap N}^{\N}(X)\geq \dim_{C\cap N}^{\N}(Y)+d$.
        \item Additivity: $\dim_{C\cap N}^{\N}(ab/A)=\dim_{C\cap N}^{\N}(a/Ab)+\dim_{C\cap N}^{\N}(b/A)$. 
        \item Eventual uniform definability: If $f:\cH\to G$ is the function given by $f(\N)=\dim_{C\cap N}^{\N}(\varphi(x,a))$ then there exists some formula $\psi_{\varphi,f}\in\tp(a)$ and some $\N'\in\cH$ such that in every $\N\in\cH$ with $\N\supseteq\N'$ we have $\dim_{C\cap N}^{\N}(\varphi(x,a'))=f(\N)$ for all $a'\models\psi_{\varphi,f}.$
    \end{enumerate}
    Then we may consider the limit dimension defined over this approximated dimension in the sense of Definition \ref{def:limdim}.
\end{definition}

Note that we cannot require a total ordering of $G$, as in the parameterized case study the image of dimension is a set of tuples that is totally ordered in each coordinate but not necessarily as a whole. We will try to generalize the argument from the $T^*_{\feq}$ case to show when we can convert the limit dimension into a totally ordered set.

Let $\pi:\im(\dim_C)\to \omega^k\cup \{-\infty\}$ be an order-preserving linear map from the image of $\dim_C$ to a $k$-tuple of natural numbers with the lexicographic ordering, for some fixed finite $k$. Then we can take the values of $\dim_C(X)$ to be in this totally ordered set, while preserving the following axioms inherited from the approximation:

\begin{lemma}\label{lem:limdimaxioms}
    Suppose $\pi:S\to\omega^k\cup\{-\infty\}$ for some fixed finite $k>0$ satisfies the following conditions:
    \begin{enumerate}
        \item Linearity: for all $[f],[g]\in S$, $\pi([f]+[g])=\pi([f])+\pi([g])$.
        \item Order-preserving: for all $[f],[g]\in S$, $[f]\leq[g]\implies\pi([f])\leq\pi([g])$.
        \item Algebraic normalization:  $\pi(-\infty)=-\infty$ and $\pi(0)=0$. 
    \end{enumerate}

    Then both $\dim_C(X)$ and $(\pi\circ\dim_C)(X)$ satisfy the following axioms.
    \begin{enumerate}
        \item Invariance: If $a\equiv_C^{\M} a'$ then $\dim_C(\varphi(x,a))=\dim_C(\varphi(x,a'))$
        \item Algebraicity: If $X$ is finite non-empty then $\dim_C(X)=0$, and $\dim_C(\varnothing)=-\infty$.
        \item Additivity: $\dim_C(ab/A)=\dim_C(a/Ab)+\dim_C(b/A)$. 
        \item Definability: If $\dim_C(\varphi(x,a))=d$ then there exists some formula $\psi_{\varphi,d}\in\tp(a)$ such that $\dim_C(\varphi(x,a'))=d$ for all $a'\models\psi_{\varphi,d}.$
    \end{enumerate}
\end{lemma}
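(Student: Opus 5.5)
Each of the four properties for $\dim_C$ comes from the corresponding axiom of Definition \ref{def:approxdimaxioms} (or from the approximation conditions of Definition \ref{def:approximation}), holding on a cone of $\cH$. These pass to the class in $S=\cF/I$, since two functions that agree on a cone define the same element of $S$. The statements for $\pi\circ\dim_C$ then follow by applying $\pi$ and using its three hypotheses.

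\emph{Invariance.} Suppose $a\equiv^\M_C a'$. By homogeneity (condition 7), $a\equiv^{\N}_{C\cap N} a'$ for every $\N\in\cH$ containing $a,a'$. This set is a cone, since the system is directed. On that cone, axiom (1) of Definition \ref{def:approxdimaxioms} gives
\[\dim^{\N}_{C\cap N}(\varphi(x,a)\cap N)=\dim^{\N}_{C\cap N}(\varphi(x,a')\cap N),\]
so the two functions differ by an element of $I$ and have the same class. Applying $\pi$ preserves the equality.

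\emph{Algebraicity.} Let $X$ be finite and nonempty. By covering and directedness, $X\subseteq N$ on a cone. By convergence of truth value, the formula defining $X$ defines exactly $X$ in all large enough $\N$. Axiom (2) then makes the function $0$ on a cone, so $\dim_C(X)=[0]=0$. The case $X=\varnothing$ is the same, giving $-\infty$. Algebraic normalization of $\pi$ gives the same values for $\pi\circ\dim_C$.

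\emph{Additivity.} Axiom (5) holds pointwise for all large enough $\N$. The group operation on $S$ is induced pointwise from $\cF$, so $[f_{ab/A}]=[f_{a/Ab}]+[f_{b/A}]$. Linearity of $\pi$ transfers this to $\pi\circ\dim_C$.

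\emph{Definability.} This is the step I expect to be the main obstacle. Let $d=[f]$, where $f(\N)=\dim^{\N}_{C\cap N}(\varphi(x,a))$. Axiom (6) provides $\psi_{\varphi,f}\in\tp(a)$ and $\N'$ such that for every $a'\models\psi_{\varphi,f}$ and every $\N\supseteq\N'$, the function for $\varphi(x,a')$ agrees with $f$. Hence its class is $d$.

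The subtlety is that $a'$ need not lie in $\N'$. The cone on which the agreement is meaningful is the set of $\N$ containing $\N'$ together with $a'$ and the relevant part of $C$. Agreement on that smaller cone still yields equality in $S$, because $I$ only requires vanishing on some cone. I would also make sure $\psi_{\varphi,f}$ is satisfied in $\M$ and not merely in the approximations; convergence of truth value allows passing between the two. For $\pi\circ\dim_C$ the same formula works, since if $\dim_C(\varphi(x,a'))=d$ then $\pi(\dim_C(\varphi(x,a')))=\pi(d)$.

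The cost is that the formula depends on the representative $f$ rather than only on $d$. This is harmless: any representative of $d$ determines the same class, and the formula is only needed to exist.

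Finally, in each item I would note explicitly where order-preservation of $\pi$ is needed. I expect it is not needed for these four axioms and matters only for later comparisons of dimension.
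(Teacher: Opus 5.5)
Your proposal is correct and follows the paper's proof essentially item by item. Each axiom is read off from the corresponding approximation axiom on a cone: homogeneity plus approximated invariance, finiteness/emptiness on a cone, pointwise additivity, and eventual uniform definability. It is then transferred to $\pi\circ\dim_C$ via normalization and linearity. Your extra care about the cone needing to contain $a'$, and about $\psi_{\varphi,f}$ being satisfied in $\M$, refines points the paper leaves implicit, and you are right that order-preservation of $\pi$ is never used.
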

\begin{proof}
    The arguments largely follow directly from the definition and the axioms for dimension in the approximation. For this proof, we will take $\dim_C(X)$ to be the value in $S$ and $\pi(\dim_C(X))$ the corresponding value in $\omega^k\cup\{-\infty\}$. 
    
    \underline{Invariance}: Let $a\equiv_C^\M a'$. By homogeneity of $\cH$, $a\equiv_{C\cap N}^{\N}a'$ for any $\N\in\cH$ containing $a$ and $a'$, so invariance of dimension in the approximation asserts that $\dim_{C\cap N}^{\N}(\varphi(x,a))=\dim_{C\cap N}^{\N}(\varphi(x,a'))$ in any such $\N$. Then by definition $\dim_C(\varphi(x,a))=\dim_C(\varphi(x,a'))$, and it follows that $\pi(\dim_C(\varphi(x,a)))=\pi(\dim_C(\varphi(x,a')))$.

    \underline{Algebraicity}: Suppose $X$ is finite and non-empty. Then $X\cap N$ is finite for all $\N\in\cH$, and for large enough $\N$ is non-empty, so algebraicity in the approximation means that $\dim_C(X)=[x\mapsto 0]$. 
    
    For all $\N\in\cH$, $\dim_C^{\N}(\varnothing)=-\infty$, so $\dim_C(X)=[x\mapsto -\infty]$. By algebraic normalization, $\pi$ maps the constant $-\infty$ function to $-\infty$ and the constant 0 function to 0, so algebraicity holds. 
    
    \underline{Additivity}: Let $[f]=\dim_C(ab/A)$, $[g]=\dim_C(a/Ab)$, and $[h]=\dim_C(b/A)$. Then by additivity in the approximation, for large enough $\N\in\cH$ \[f(\N)=\dim_{C\cap N}^{\N}(ab/A)=\dim_{C\cap N}^{\N}(a/Ab)+\dim_{C\cap N}^{\N}(b/A)=g(\N)+h(\N).\] So $[f]=[g]+[h]$, and additivity holds. By linearity of $\pi$, $\pi([f])=\pi([g]+[h])=\pi([g])+\pi([h])$, so it additionally holds for $\pi\circ\dim_C$. 

    \underline{Definability}: Let $\dim_C(\varphi(x,a))=[f]\in S$ and let $\N'\in\cH$ and $\psi_{\varphi,f}$ be as given by eventual uniform definability of $\dim_C^{\N}$. Suppose $a'\models\psi_{\varphi,f}$. Then for all $\N\in\cH$ with $\N'\subseteq\N$, $\dim_{C\cap N}^{\N}(\varphi(x,a'))=f(\N)$. By definition, $\dim_C(\varphi(x,a'))=[f]=\dim_C(\varphi(x,a))$, and it follows that $\pi(\dim_C(\varphi(x,a')))=\pi(\dim_C(\varphi(x,a)))=\pi([f])$.
\end{proof}

\begin{remark}
    Union holds for $\dim_C$ under the assertion that $\max([f],[g])$ is the coordinate-wise maximum. It holds for $\pi\circ\dim_C$ under the additional assertion that $\pi$ preserves coordinate-wise maximum; however this condition is not met in the $T^*_{\feq}$ case.
\end{remark}

From now on we will drop $\pi$ from the notation and simply consider $\dim_C(X)$ as valued in the totally ordered set $\omega^k\cup\{-\infty\}$.

\begin{definition}
    Let $\ind^{\dim}$ be the independence relation representing a drop in limit dimension; that is:
    \[A\ind^{\dim}_CB \iff \dim_C(A/C)=\dim_C(A/B)\]
\end{definition}

\begin{theorem}\label{thm:dimprops}
    If a structure $\M\models T$ approximated by $\cH$ has limit dimension $\dim_C(X)$, then $\ind^{\dim}$ satisfies the following axioms: symmetry, monotonicity, strong finite character, and extension.
\end{theorem}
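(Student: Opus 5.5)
I would prove the four properties one at a time, using only the axioms of Lemma \ref{lem:limdimaxioms} for the totally ordered limit dimension $\dim_C$ (invariance, algebraicity, additivity, definability), together with the covering and convergence-of-truth-value conditions on $\cH$. Throughout I read $\dim_C(A/B)$ as $\dim_C(A/BC)$, as in Definition \ref{def:dimdrop}, and for infinite $A$ I work with finite subtuples. The convention is that $A\ind^{\dim}_CB$ means every finite $a\in A$ has no dimension drop over $BC$.

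\textbf{Symmetry.} Apply additivity twice to $\dim_C(ab/C)$:
\[\dim_C(a/Cb)+\dim_C(b/C)=\dim_C(ab/C)=\dim_C(b/Ca)+\dim_C(a/C).\]
Since the values lie in $\omega^k$ and are cancellative in the lexicographically ordered monoid, $\dim_C(a/Cb)=\dim_C(a/C)$ if and only if $\dim_C(b/Ca)=\dim_C(b/C)$. When dimensions are $-\infty$ the types are inconsistent, so that case is vacuous. Passing from finite tuples to sets is then handled by the finite-subtuple convention.

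\textbf{Monotonicity.} I need the fact that $\dim_C$ of a type is nonincreasing as the parameter set grows. This is immediate from the definition of the dimension of a type as the minimum over formulas: the set of formulas grows, so the minimum can only drop. Now suppose $a\ind^{\dim}_C B$ with $B'\subseteq B$ and $a'\subseteq a$. Then
\[\dim_C(a/C)=\dim_C(a/CB)\le\dim_C(a/CB')\le\dim_C(a/C),\]
so equality holds throughout. Restricting to a subtuple $a'$ uses additivity: write $a=a'a''$ and split $\dim_C(a/CB)$ as $\dim_C(a''/CBa')+\dim_C(a'/CB)$. Each summand is at most the corresponding summand over $C$, and the totals agree, so each summand agrees.

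\textbf{Strong finite character.} Suppose $\dim_C(a/CB)<\dim_C(a/C)$. The minimum defining $\dim_C(a/CB)$ is attained by a single formula $\varphi(x,b,c)$ in $\tp(a/CB)$ with $b\in B$ finite. By definability there is $\psi\in\tp(bc)$ such that every $b'c'\models\psi$ satisfies $\dim_C(\varphi(x,b',c'))=\dim_C(\varphi(x,b,c))<\dim_C(a/C)$. So any $a'\models\varphi(x,b',c')$ with $b'c'\models\psi$ has $\dim_C(a'/Cb')<\dim_C(a'/C)$ whenever $a'\equiv_C a$. The formula $\varphi(x,y,c)\wedge\psi(y,c)$ therefore witnesses dependence uniformly.

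\textbf{Extension.} Given $a\ind^{\dim}_C B$ and $B\subseteq B'$, I want $a'\equiv_{CB}a$ with $a'\ind^{\dim}_C B'$. The plan is to take the partial type $\tp(a/CB)$ together with, for every formula $\theta(x,b')$ over $CB'$ with $\dim_C(\theta)<\dim_C(a/C)$, its negation $\neg\theta(x,b')$. To show this type is consistent, I would use the union axiom at the level of $\dim_C$ (coordinatewise maximum) and compactness. Any finite subset of it failing means some formula of $\tp(a/CB)$ is covered by finitely many low-dimension sets. Their union then has low dimension, which contradicts $\dim_C(a/CB)=\dim_C(a/C)$, with invariance used to move the covering back over $B$. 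A realization in a saturated elementary extension then gives $a'$, and its type over $CB'$ has no formula of lower dimension.

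\textbf{Main obstacle.} I expect this extension step to be the hardest. The remark after Lemma \ref{lem:limdimaxioms} warns that the union axiom may fail for $\pi\circ\dim_C$. So at this step I would first try to work in $S$ itself, where union holds coordinatewise, and only then push forward through $\pi$ using order preservation.
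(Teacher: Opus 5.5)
Your symmetry and monotonicity arguments are fine. Shrinking the left side via additivity, rather than via symmetry as the paper does, is a harmless variant. Your extension step is the paper's compactness argument.

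\textbf{The gap is in strong finite character.} With $b$ substituted, your witness $\varphi(x,y,c)\wedge\psi(y,c)$ is just $\varphi(x,b,c)$, since $\psi(b,c)$ holds. You only show that a realization $a'$ depends on $b$ under the extra hypothesis $a'\equiv_C a$, and that hypothesis is not expressed by the formula. For an arbitrary $a'\models\varphi(x,b,c)$ you get $\dim_C(a'/Cb)\le d$, but nothing prevents $\dim_C(a'/C)\le d$ as well, and then there is no drop. For instance, $\varphi(x,b,c)$ was chosen only for its dimension and may be realized by a tuple $e$ from $C$; then $\dim_C(e/C)=0=\dim_C(e/Cb)$, so $e\ind^{\dim}_C b$. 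Conjoining a formula of $\tp(a/C)$ does not help either. The condition $\dim_C(x/C)>d$ is in general only type-definable, so no single formula enforces it. You have therefore not produced what strong finite character requires, and what \cite[Theorem 6.1]{chernikov2023transitivity} uses: a formula $\theta\in\tp(a/Cb)$ all of whose realizations are dependent on $b$.

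\textbf{The fix, which is how the paper argues, is to apply symmetry first}, so that the tuple whose dimension must drop is the fixed one.
\begin{enumerate}
\item From $a\nind^{\dim}_C b$ get $b\nind^{\dim}_C a$.
\item Pick $\varphi(x,a)\in\tp(b/Ca)$ with $\dim_C(\varphi(x,a))=d=\dim_C(b/Ca)<\dim_C(b/C)$.
\item Take $\psi_{\varphi,d}\in\tp(a)$ from definability, and set $\theta(y)=\varphi(b,y)\wedge\psi_{\varphi,d}(y)\in\tp(a/Cb)$.
\item For any $a'\models\theta$, the formula $\varphi(x,a')\in\tp(b/Ca')$ has dimension $d$, so $\dim_C(b/Ca')\le d<\dim_C(b/C)$.
\item Symmetry then gives $a'\nind^{\dim}_C b$.
\end{enumerate}
Because $b$ is fixed, $\dim_C(b/C)$ never changes, and no control over $\tp(a'/C)$ is needed.

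On extension: the paper applies union to the totally ordered ($\pi$-)values without comment. Your caveat about union is therefore a real issue that the paper does not address. Retreating to $S$ does not by itself fix it, for two reasons. First, $S$ is only partially ordered, so finitely many values each strictly below $d$ can have coordinatewise maximum equal to $d$. Second, $\ind^{\dim}$ is defined through the $\pi$-values, and $\pi$ need not detect equality in $S$.
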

\begin{proof}
   
    \underline{Symmetry}: follows from additivity. Suppose $a\ind^{\dim}_Cb$. Then $\dim_C(a/C)=\dim_C(a/Cb)$. Consider $\dim_C(ab/C)$. By additivity of limit dimension, 
    \[\dim_C(ab/C)=\dim_C(a/Cb)+\dim_C(b/C)\] and \[\dim_C(ab/C)=\dim_C(b/Ca)+\dim_C(a/C)\]
    By our assumption that $\dim_C(a/C)=\dim_C(a/Cb)$ we can cancel, leaving $\dim_C(b/C)=\dim_C(b/Ca)$. Then by definition $b\ind^{\dim}_Ca$.

    \underline{Monotonicity}: Suppose $aa'\ind^{\dim}_Cbb'$, so $\dim_C(aa'/Cbb')=\dim_C(aa'/C)$. By the definition of dimension on types it follows that $\dim_C(aa'/C)=\dim_C(aa'/Cb)$, since $\tp(aa'/C)\subseteq\tp(aa'/Cb)\subseteq\tp(aa'/Cbb')$. Then $aa\ind^{\dim}_Cb$, so by symmetry and the same argument we get $a\ind^{\dim}_Cb$.

    \underline{Strong finite character}: Suppose $a\nind^{\dim}_Cb$. By symmetry, $b\nind^{\dim}_Ca$ so $\dim_C(b/C)\neq\dim_C(b/aC)$. Let $d=\dim_C(b/aC)$, so $d<\dim_C(b/C)$. Let $\varphi(x,a)\in\tp(b/Ca)$ such that $\dim_C(\varphi(x,a))=d$. Then by definability there is a formula $\psi_{\varphi,d}$ such that if $a'\models\psi_{\varphi,d}$ then $\dim_C(\varphi(x,a'))=d$. 

    Let $\theta(y)$ be the formula $\varphi(b,y)\land\psi_{\varphi,d}(y)\in\tp(a/Cb)$. Suppose $a'\models\theta$. Then since $\varphi(x,a')\in\tp(b/Ca')$ we have \[\dim_C(b/Ca')\leq\dim_C(\varphi(x,a'))=[d]<\dim_C(b/C)\]
    so $\dim_C(b/Ca')<\dim_C(b/C)$. In particular, $\dim_C(b/Ca')\neq\dim_C(b/C)$, so $b\nind^{\dim}_Ca'$ and by symmetry $a'\nind^{\dim}_Cb$. Thus strong finite character holds via the formula $\theta$. 
    
    \underline{Extension}: Suppose $\dim_C(a/Cb)=\dim_C(a/C)=d$. Let $p(x)=\tp(a/Cb)\cup\{\neg\varphi(x,\bb):\bb\in Cbb'$ and $\dim(\varphi(x,\bb))<d\}$. Suppose $p(x)$ is inconsistent. Then by compactness $\tp(a/Cb)$ implies a finite disjunction of formulas $\varphi(x,\bb)$ with dimension $<d$, so $\tp(x/Cb)$ must itself have dimension $<d$, a contradiction.    
\end{proof}

\begin{remark}
    If, in addition to the properties in Theorem \ref{thm:dimprops}, $\ind^{\dim}$ satisfies the Independence Theorem, this is sufficient to show that it strengthens Kim-independence \cite[Theorem 6.1]{chernikov2023transitivity}.
\end{remark}

\subsection{Next steps}

The case study of $T^*_{\feq}$ provides various insights into what might and might not work for a general notion of limit dimension. In particular, the proposed axiomatic notion of dimension covers most of the properties necessary to prove $\nsop_1$, with the notable exception of the independence theorem. 
\begin{question}
    What additional axioms of approximated dimension are needed to obtain the independence theorem for $\ind^{\dim}$?
\end{question}

One approach we tried to answer this question was to include as an axiom the $S_1$ property, which asserts that if there exists an indiscernible sequence $(b_i)_{i<\omega}$ and a formula $\varphi(x,y)$ such that
\begin{itemize}
    \item $\dim_{C\cap N}^{\N}(\varphi(x,b_1)\land\varphi(x,b_2))\leq d$ and
    \item $\dim_{C\cap N}^{\N}(\theta\land\varphi(x,b_i))>d$ for each $i$,
\end{itemize}
then $\dim_{C\cap N}^{\N}(\theta)>d$.

If we could obtain this property in the limit, we would be able to prove the independence theorem directly; however unlike the other axioms this property of limit dimension does not follow from its presence in the approximation. As such we would need additional assertions in order to get the desired behavior.

\begin{question}
    Does the proposed notion of limit dimension for parameterized theories (Definition \ref{def:paramdim}) adequately generalize to examples outside of $T^*_{\feq}$? How can it be adapted to account for non-trivial $\acl$ in the base theory?
\end{question}

\begin{question}
    Is there a notion of limit dimension for $T_\infty$ which fits this axiomatic framework, and if so, does it recover Kim-independence?
\end{question}

\bibliography{biblio}
\bibliographystyle{plain}
\end{document}

%% file: preamble.tex
\usepackage{enumerate}
\usepackage{url}
\usepackage[font=small,labelfont=bf]{caption}
\usepackage[all,arc]{xy}
\usepackage{tabularx}
\usepackage{adjustbox}

\usepackage[a4paper,total={16cm,23.5cm},top=3cm,left=2.5cm]{geometry}

\usepackage{amssymb,latexsym}
\usepackage{amsmath,amsthm}
\usepackage{amsfonts,mathrsfs}
\usepackage{mathtools}

\usepackage{tikz}
\usetikzlibrary{shapes.geometric, arrows, positioning,decorations.pathreplacing,calligraphy}

\tikzstyle{bloc} = [rectangle, rounded corners, 
minimum width=3cm, 
minimum height=1cm,
text centered, 
draw=black, 
fill=airforceblue!30]

\tikzstyle{decision} = [diamond,
minimum width=3cm, 
minimum height=1cm, 
text centered, 
draw=black, 
fill=airforceblue!30]
\tikzstyle{arrow} = [thick,->,>=stealth]

\tikzstyle{io} = [trapezium, 
trapezium stretches=true, 
trapezium left angle=70, 
trapezium right angle=110, 
minimum width=3cm, 
minimum height=1cm, text centered, 
draw=black, fill=blue!30]

\tikzstyle{process} = [rectangle, 
minimum width=3cm, 
minimum height=1cm, 
text centered, 
text width=3cm, 
draw=black, 
fill=orange!30]

\usepackage{tikz-cd}
\usepackage{todonotes}

\usepackage[all]{xy}
\usepackage{graphicx}

\usepackage{hyperref}
\hypersetup{
    colorlinks,
    citecolor=blue,
    filecolor=blue,
    linkcolor=blue,
    urlcolor=blue
}

\usepackage{enumitem}

\usepackage{listings}
\usepackage{color}

\definecolor{dkgreen}{rgb}{0,0.6,0}
\definecolor{gray}{rgb}{0.5,0.5,0.5}
\definecolor{mauve}{rgb}{0.58,0,0.82}

\newtheorem{thm}{Theorem}[section]

\newtheorem{theorem}[thm]{Theorem}

\newtheorem{corollary}[thm]{Corollary}
\newtheorem{lemma}[thm]{Lemma}
\newtheorem{proposition}[thm]{Proposition}

\newtheorem*{theorem*}{Theorem}

\theoremstyle{definition}

\newtheorem{fact}[thm]{Fact}

\theoremstyle{definition}
\newtheorem{definition}[thm]{Definition}
\newtheorem{example}[thm]{Example}
\newtheorem{question}[thm]{Question}

\theoremstyle{remark}

\newtheorem{remark}[thm]{Remark}

\newtheorem*{pfclaim}{Claim}

\makeatletter
\let\c@equation\c@thm
\makeatother
\numberwithin{equation}{section}

\newcommand\nN{\mathbb{N}}

\def\lang{\mathcal{L}}

\def\cH{\mathcal{H}}
\def\M{\mathcal{M}}
\def\N{\mathcal{N}}
\def\cF{\mathcal{F}}

\def\Th{\operatorname{Th}}

\newcommand{\ACF}{\mathrm{ACF}}

\newcommand{\acl}{\mathrm{acl}}

\newcommand{\alg}{\mathrm{alg}}

\newcommand{\tp}{\mathrm{tp}}

\newcommand{\vect}[1]{\langle {#1} \rangle}
\newcommand{\abs}[1]{\lvert {#1} \rvert}

\newcommand{\Fraisse}{Fra\"issé }

\DeclareMathOperator{\im}{im}

\DeclareMathOperator{\Span}{span}
\DeclareMathOperator{\rk}{rk}
\DeclareMathOperator{\res}{res}

\definecolor{airforceblue}{rgb}{0.36, 0.54, 0.66}

\def\Ind{\setbox0=\hbox{$x$}\kern\wd0\hbox to 0pt{\hss$\mid$\hss}
\lower.9\ht0\hbox to 0pt{\hss$\smile$\hss}\kern\wd0}
\def\Notind{\setbox0=\hbox{$x$}\kern\wd0\hbox to 0pt{\mathchardef
\nn=12854\hss$\nn$\kern1.4\wd0\hss}\hbox to
0pt{\hss$\mid$\hss}\lower.9\ht0 \hbox to 0pt{\hss$\smile$\hss}\kern\wd0}
\def\ind{\mathop{\mathpalette\Ind{}}}
\def\nind{\mathop{\mathpalette\Notind{}}}

\def\indi#1{\mathop{\ \ \hbox to 0ex{\hss$\vert^{\hbox to 0ex{$\scriptstyle#1$\hss}}$\hss}
\lower1ex\hbox to 0ex{\hss$\smile$\hss}\ \ }}

\def\nindi#1{\mathop{\ \ \hbox to 0ex{\hss$\!\not{\vert}^{\hbox to 0ex{$\scriptstyle\,#1$\hss}}$\hss}
\lower1ex\hbox to 0ex{\hss$\smile$\hss}\ \ }}

\renewcommand{\models}{\vDash}

\DeclareMathOperator{\aut}{Aut}

\DeclareMathOperator{\feq}{feq}

\DeclareMathOperator{\gen}{gen}

\DeclareMathOperator{\sep}{sep}
\DeclareMathOperator{\nsop}{NSOP}

\DeclareMathOperator{\Sym}{Sym}
\DeclareMathOperator{\spann}{span}
\DeclareMathOperator{\Image}{Im}
\DeclareMathOperator{\GL}{GL}
\DeclareMathOperator{\SL}{SL}

\newcommand{\aaa}{\bar{a}}
\newcommand{\bb}{\bar{b}}
\newcommand{\cc}{\bar{c}}

\newcommand{\xx}{\bar{x}}
\newcommand{\vv}{\bar{v}}
\newcommand{\ww}{\bar{w}}
\newcommand{\yy}{\bar{y}}